\title{Moduli spaces of nilpotent displays}
\author{Sebastian Bartling}
\address{Sebastian Bartling: Fakultät für Mathematik, Universität Duisburg-Essen, D-45127 Essen}
\email{sebastian-bartling@hotmail.de}
\author{Manuel Hoff}
\address{Manuel Hoff: Fakultät für Mathematik, Universität Bielefeld, D-33501 Bielefeld}
\email{manuel.hoff@uni-bielefeld.de}
\begin{document}

    \begin{abstract}
        We show that the moduli problem of deformations of nilpotent displays by quasi-isogenies is representable, without using $p$-divisible groups.
        The main ingredients are Artin's criterion and the theory of truncated displays.
        This gives in particular a new proof for the representability of Rapoport-Zink spaces.
    \end{abstract}

	\maketitle

    {
    \hypersetup{hidelinks}
    \tableofcontents
    }
    
    \section{Introduction}
    Rapoport-Zink spaces \cite{RZ} are moduli spaces of $p$-divisible groups (with additional structure).
The idea that generic fibers of Rapoport-Zink spaces should fit into a general group-theoretic framework of local Shimura varieties was put forward by Rapoport-Viehmann \cite{RapoViehmann} and into reality by Scholze-Weinstein \cite{Berkeleylectures}.
The original moduli problem of $p$-divisible groups is then considered as an integral model of a local Shimura variety and recently there has been work on generalizing Rapoport-Zink spaces to a group-theoretic setting (\cite{KimHodge} \cite{HowardPappas} \cite{BueltelPappas} \cite{XuShenRZ} \cite{HamacherKim} \cite{RapoPappasIntegralModels}).
As far as we are aware of, all constructions of integral models of local Shimura varieties found in the literature rely via intricate steps on the original representability result of Rapoport-Zink and therefore on the use of $p$-divisible groups.

The aim of this article is to give a new proof of the representability of Rapoport-Zink spaces for the group $\GL_h$ using Zink's theory of displays which is completely independent of $p$-divisible groups.

\medskip

Let us briefly recall the classical Rapoport-Zink moduli problem of $p$-divisible groups considered in \cite{RZ}.
Fix a prime $p$ and let $\F$ be an algebraically closed field of characteristic $p$.
Write $W(\F)$ for the ring of Witt vectors of $\F$, fix a $p$-divisible group $X_0$ over $\F$ and let $\Nilp_{W(\F)}$ be the category of $p$-nilpotent $W(\F)$-algebras.
Then we can consider the Rapoport-Zink moduli problem
\[
    \mathcal{M}^{\RZ} \colon \Nilp_{W(\F)} \rightarrow \Set
\]
sending $R \in \Nilp_{W}$ to the set of isomorphism classes of pairs $(X, \rho)$ consisting of a $p$-divisible group $X$ over $R$ and a quasi-isogeny $\rho \colon X_{0, R/pR} \dashrightarrow X_{R/pR}$.

The central representability result in Rapoport-Zink's book is the following:

\begin{theorem}[{Rapoport-Zink \cite[Thm. 2.16]{RZ}}] \label{thm: thm of RZ}
    The functor $\mathcal{M}^{\RZ}$ is representable by a formal scheme that is locally formally of finite type over $\Spf(W(\F))$.
\end{theorem}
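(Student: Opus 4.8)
The plan is to translate the problem into Zink's theory of displays, where $p$-divisible groups disappear from the picture, and then to verify Artin's representability criterion directly, with the theory of truncated displays supplying the finiteness inputs. By the theory of Zink and Lau, attaching to a $p$-divisible group $X$ its display $\mathcal{P}(X)$ is an equivalence of categories over every $p$-nilpotent ring, compatible with base change and carrying quasi-isogenies to quasi-isogenies; under this equivalence the formal $p$-divisible groups correspond to the nilpotent displays, and the general case is bootstrapped from the nilpotent one. Writing $\mathcal{P}_0$ for the display of $X_0$, the functor $\mathcal{M}^{\RZ}$ is thereby identified with the functor $\mathcal{M}$ on $\Nilp_{W(\F)}$ sending $R$ to the set of isomorphism classes of pairs $(\mathcal{P}, \rho)$, where $\mathcal{P}$ is a nilpotent display over $R$ and $\rho \colon \mathcal{P}_{0, R/pR} \dashrightarrow \mathcal{P}_{R/pR}$ is a quasi-isogeny. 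It suffices to prove that $\mathcal{M}$ is representable by a formal scheme locally formally of finite type over $\Spf(W(\F))$.

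First I would record the basic structure of $\mathcal{M}$. It is an fpqc sheaf, since the underlying modules of displays are finite projective and hence satisfy descent, as do quasi-isogenies. The height of $\rho$ is locally constant on $\Spec R$, so $\mathcal{M}$ is the disjoint union of open-and-closed subfunctors $\mathcal{M}^{(i)}$ indexed by this height; on each summand, after replacing $\rho$ by $p^N \rho$ for a suitable $N$, we may assume $\rho$ is an isogeny with cokernel killed by $p^N$. The essential local input is the deformation theory of a single nilpotent display, i.e. the display-theoretic Grothendieck--Messing theory of Zink: deformations along a nilpotent thickening are classified by lifts of the Hodge filtration, so the deformation functor of a display over $\F$ is formally smooth and pro-represented by a power series ring over $W(\F)$, with finite-dimensional tangent space. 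Since quasi-isogenies are rigid --- they deform uniquely along nilpotent thickenings --- the deformation functor of a pair $(\mathcal{P}, \rho)$ coincides with that of $\mathcal{P}$, so the same conclusion holds for $\mathcal{M}$ at each of its $\F$-points, and the deformation theory demanded by Artin's criterion is thereby under control.

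The theory of truncated displays then supplies the remaining hypotheses of Artin's criterion. Limit-preservingness holds because a nilpotent display --- and, for a quasi-isogeny of bounded height, the datum $\rho$ as well --- is reconstructed from finitely many of its truncated displays, each of which is a finitely presented object, so $\mathcal{M}$ commutes with filtered colimits of rings. Effectivity of formal deformations holds because a nilpotent display over a complete Noetherian local ring is the limit of the compatible system of its reductions modulo the powers of the maximal ideal, which reduces the statement to the finite-type world of truncated displays, and a compatible system of quasi-isogenies glues along the same limit. Feeding all of this into Artin's criterion produces an algebraic space $\mathcal{M}^{(i)}$ locally of finite type over $W(\F)$ --- here one uses that $W(\F)$ is a complete discrete valuation ring, hence Noetherian --- so that $\mathcal{M}^{(i)}_{\mathrm{red}}$ is automatically locally of finite type over $\F$. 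That this algebraic space is in fact a formal scheme follows from the explicit pro-representability of $\mathcal{M}$ at its $\F$-points by power series rings over $W(\F)$, together with a gluing/quasi-compactness argument again using truncated displays, while rigidity of quasi-isogenies gives separatedness. Together these statements say precisely that $\mathcal{M}^{(i)}$, and hence $\mathcal{M}$, is a Noetherian formal scheme that is locally formally of finite type over $\Spf(W(\F))$, as desired.

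I expect the main obstacle to be the deformation-theoretic heart of the argument: proving the display-theoretic Grothendieck--Messing theorem in a form precise enough --- formal smoothness of the deformation functor and, above all, effectivity of formal deformations over complete local rings --- to feed Artin's criterion, and, intertwined with it, establishing the structural results on truncated displays (that they organize into well-behaved finite-type moduli and that a nilpotent display is recovered from its truncations) that here replace, with no recourse to $p$-divisible groups, the boundedness and representability arguments used by Rapoport and Zink.
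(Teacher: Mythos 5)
Your overall strategy (pass to displays, then verify Artin's criterion with truncated displays supplying the finiteness) is the paper's strategy, but two of your structural claims are wrong or unsupported, and they sit exactly where the work is. First, the claim that $\calM$ decomposes into open-and-closed pieces on which, ``after replacing $\rho$ by $p^N\rho$ for a suitable $N$'', the quasi-isogeny becomes an isogeny is false: fixing the height of $\rho$ does give an open-closed decomposition, but each such piece is not quasi-compact and admits no uniform $N$ (already for a supersingular $p$-divisible group of height $2$ the height-zero component is an infinite chain of rational curves, with the required $N$ unbounded). The correct structure is an exhaustion of $\calM$ by \emph{closed} subfunctors $\calM^r$ cut out by the condition that $p^r\rho$ be an isogeny; that this condition is finitely presented closed is itself a nontrivial foundational statement proved display-theoretically in the paper (\Cref{cor:locus-isogeny}, resting on \Cref{prop:p-torsionfree} and \Cref{prop:locus-divisible-by-p}), and after representing each $\calM^r$ one still must show that the increasing union of closed formal subschemes is a formal scheme locally formally of finite type (\Cref{cor: Corollary how to check M is representable}, using the structure of the underlying topological space). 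Second, your limit-preservation argument rests on the assertion that a nilpotent display together with a bounded quasi-isogeny ``is reconstructed from finitely many of its truncated displays''. As stated this is false: the truncation functors are not fully faithful and displays do not descend along filtered colimits, precisely because $W(-)$ does not commute with them. What is true, and is the technical heart of the paper, is \Cref{lem:extending-adjoint-matrix} together with \Cref{cor:lifting-isomorphism}: an isogeny of $(n+\mathrm{const})$-truncated displays into the fixed lift $\widetilde{\calP}_0$ extends \emph{fpqc-locally} to an isogeny of genuine displays once $n$ exceeds a constant depending on the height of the isogeny, the nilpotency order and the $p$-power torsion of the base, and the extension is controlled uniquely; the proof needs an adjoint-matrix correction over semiperfect covers plus Grothendieck--Messing, and limit preservation then follows via the fpqc-local criterion \Cref{lem:lofp-on-objects}. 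You correctly identify this as the main obstacle, but the proposal contains no idea for overcoming it.

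There is also a gap at the very first step, the reduction of $\mathcal{M}^{\RZ}$ to the display moduli problem. The Zink--Lau equivalence identifies nilpotent displays with \emph{formal} $p$-divisible groups only; for a general $X_0$ (with nontrivial \'etale part) the deformations $X$ occurring in $\mathcal{M}^{\RZ}$ need not be formal, so $\mathcal{M}^{\RZ}$ is \emph{not} identified with the nilpotent-display functor $\calM$, and ``the general case is bootstrapped from the nilpotent one'' is not an argument. The paper deduces the general case from the formal one by invoking Fargues' result expressing the Rapoport--Zink space of $X_0$ via the universal object over the Rapoport--Zink space of the formal part $X_0^{\circ}$; you would need to cite or reprove such a statement (or restrict the theorem to formal $X_0$). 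Finally, two smaller points: the closedness of the diagonal in Artin's criterion is not a consequence of rigidity of quasi-isogenies alone but again of \Cref{cor:locus-isogeny}, and $\calM$ is only locally Noetherian, not Noetherian, since it is not quasi-compact.
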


\subsection{Result and overview of proof}

Let $\F$ be a perfect field of characteristic $p$ and let $\mathcal{P}_{0}$ be a nilpotent display over $\F$.
Consider the following functor
\[
    \mathcal{M} = \mathcal{M}^{\Zink} \colon \Nilp_{W(\F)}\rightarrow \Set,
\]
which sends $R\in \Nilp_{W(\F)}$ to the set of isomorphism classes of pairs $(\mathcal{P}, \rho)$, where $\mathcal{P}$ is an (automatically nilpotent) display over $R$ and $\rho \colon \mathcal{P}_{R/p} \dashrightarrow \mathcal{P}_{0, R/p}$ is a quasi-isogeny.

\begin{theorem}[\ref{thm:main-result}] \label{thm: Main theorem}
    The functor $\mathcal{M}$ is representable by a formal scheme that is locally formally of finite type, ind-proper and formally smooth over $\Spf(W(\F))$.
\end{theorem}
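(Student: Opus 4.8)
The plan is to deduce representability from a version of Artin's criterion, with the deformation theory of (truncated) displays as the main input. First I would decompose $\mathcal{M} = \bigsqcup_{i \in \mathbb{Z}} \mathcal{M}^{(i)}$ according to the height of the quasi-isogeny $\rho$, so that it is enough to treat each $\mathcal{M}^{(i)}$; here $\mathcal{M}^{(i)}$ should be ``bounded'', in the sense that the displays $\mathcal{P}$ that occur correspond to lattices squeezed between $p^{\pm c}$ times a fixed lattice in the isocrystal of $\mathcal{P}_0$, for some $c = c(i)$. The strategy is then: (a) verify the hypotheses of Artin's criterion to conclude that $\mathcal{M}^{(i)}$ is a formal algebraic space, locally formally of finite type and formally smooth over $\Spf(W(\F))$; (b) analyse the underlying reduced subscheme to upgrade this to a formal scheme and to obtain ind-properness.

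For (a): the sheaf property is fppf descent for displays, for their morphisms and for quasi-isogenies, which passes to isomorphism classes of pairs $(\mathcal{P},\rho)$. Local finite presentation is where truncated displays enter: over a ring $R$ with $p^{N}=0$ a display is equivalent data to a truncated display of level $N$, which is a finite amount of data over the finitely generated Witt-vector functor $W_{N}$ and hence descends to a finitely generated subalgebra of $R$; the same holds for a quasi-isogeny after multiplying by a power of $p$. The deformation theory is the heart of the argument: given a square-zero extension $R'\twoheadrightarrow R$ with kernel $I$, rigidity of quasi-isogenies of displays gives a unique lift of $\rho$ along the nilpotent thickening $R'/p\twoheadrightarrow R/p$, so the deformation functor of a pair $(\mathcal{P},\rho)\in\mathcal{M}(R)$ coincides with that of the display $\mathcal{P}$ alone; and the Grothendieck--Messing type crystalline theory for displays shows that deformations of $\mathcal{P}$ to $R'$ form a (pseudo-)torsor under a finite projective module built from the Hodge filtration of $\mathcal{P}$ and from $I$, so in particular they always exist. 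Hence $\mathcal{M}^{(i)}$ is \emph{formally smooth}: obstructions vanish and the tangent space at any finite type point is finite dimensional. Formal smoothness also makes openness of versality automatic. Effectivity of formal deformations holds because a display together with a quasi-isogeny over a complete Noetherian local $W(\F)$-algebra $A$ is recovered from its reductions modulo the powers of $\mathfrak{m}_{A}$, the Witt vectors being compatible with this limit; the remaining, Schlessinger-type, conditions follow formally from the linear deformation--obstruction theory above. Artin's criterion then yields (a).

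For (b): the reduced subscheme $\mathcal{M}^{(i)}_{\mathrm{red}}$ parametrizes lattices in the isocrystal of $\mathcal{P}_0$ whose relative position to Frobenius equals the fixed type of $\mathcal{P}_0$, which --- displays in Zink's sense being inherently ``minuscule'', i.e.\ $p$-divisible-group-like --- is a minuscule coweight; thus $\mathcal{M}^{(i)}_{\mathrm{red}}$ is (a connected component of) an affine Deligne--Lusztig variety inside the Witt-vector affine Grassmannian. Minusculeness makes the defining condition closed, and the boundedness from the first paragraph makes $\mathcal{M}^{(i)}_{\mathrm{red}}$ quasi-compact; one can then also check the valuative criterion for properness directly in display terms (a special lattice over the fraction field of a valuation ring spreads out to the valuation ring). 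So $\mathcal{M}^{(i)}_{\mathrm{red}}$ is a scheme, proper over $\F$. Since a formal algebraic space has the same underlying topological space and étale site as its reduced subspace, $\mathcal{M}^{(i)}$ is then a formal scheme; and writing $\mathcal{M}^{(i)}$ as the colimit of the infinitesimal neighbourhoods of $\mathcal{M}^{(i)}_{\mathrm{red}}$ exhibits it as an increasing union, along closed immersions, of schemes proper over $\Spf(W(\F))$, so that $\mathcal{M}=\bigsqcup_{i}\mathcal{M}^{(i)}$ is ind-proper.

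I expect the main obstacle to be twofold: setting up the deformation theory of displays with enough precision --- through truncated displays and the crystalline/Grothendieck--Messing formalism --- that it plugs cleanly into Artin's criterion, and proving the boundedness (hence quasi-compactness and properness) of the components of $\mathcal{M}_{\mathrm{red}}$. This last point is exactly the input that promotes ``locally formally of finite type'' to ``ind-proper'', and in the $p$-divisible group world it is extracted from the very Rapoport--Zink theory that this paper sets out to bypass.
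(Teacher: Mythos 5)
Your overall architecture (Artin's criterion plus deformation theory of displays, then a separate argument to upgrade the formal algebraic space to a formal scheme and get ind-properness) matches the paper's, but two of your load-bearing steps fail. First, your boundedness claim is wrong: fixing the height $i$ of the quasi-isogeny does \emph{not} squeeze the underlying lattices between $p^{\pm c}$ times a fixed lattice, and the fixed-height components $\mathcal{M}^{(i)}$ are in general not quasi-compact (already for the Drinfeld/$\GL_2$ cases the reduced locus of a single height component is an infinite union of projective lines). So $\mathcal{M}^{(i)}_{\mathrm{red}}$ is neither proper nor even quasi-compact, and your step (b) collapses; this boundedness is precisely the hard point that Rapoport--Zink attack with the Bruhat--Tits building. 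The paper avoids it by exhausting $\mathcal{M}$ by the \emph{closed} (not open-closed) subfunctors $\mathcal{M}^r$ where $p^r\rho$ is an honest isogeny, further decomposed by the height $s$ of that isogeny into $\mathcal{M}^{r,s}$; these are the pieces that are proper (via the projectivity of the Witt vector affine Grassmannian, \Cref{cor: Corollary Mr,s are p-adic formal schemes}), and ind-properness of $\mathcal{M}$ comes from this exhaustion. Note also that even defining $\mathcal{M}^r\subseteq\mathcal{M}$ as a finitely presented closed immersion requires a nontrivial new input, \Cref{cor:locus-isogeny}, which your sketch does not supply and which is also what makes the diagonal a closed immersion in Artin's criterion.

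Second, your verification of local finite presentation rests on the assertion that over a ring $R$ with $p^N=0$ a display is equivalent to an $N$-truncated display. This is false: a display lives over $W(R)$, which is far larger than $W_N(R)$ even when $p^N=0$ in $R$ (already over a field of characteristic $p$ a display is a Dieudonné module, certainly not determined by its $1$-truncation), and Witt vectors do not commute with filtered colimits --- this is exactly the difficulty the paper isolates. The paper's fix is the key technical \Cref{lem:extending-adjoint-matrix}: an isogeny of $(n+\const)$-truncated displays of bounded height $m$ into $\widetilde{\mathcal{P}}_{0,R}$ extends fpqc-locally to an isogeny of genuine displays (proved by an adjoint-matrix correction over semiperfect covers, then Grothendieck--Messing to reduce to that case), combined with \Cref{cor:lifting-isomorphism} and \Cref{prop:divisible-by-p-truncated} to identify the descended point with the original one. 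Crucially this argument uses the bounded-height isogeny $p^r\rho$, which is another reason the restriction to $\mathcal{M}^r$ is needed before Artin's criterion can be applied; applied directly to your $\mathcal{M}^{(i)}$, the finite-presentation condition has no evident proof. Your deformation-theoretic points (rigidity of quasi-isogenies, formal smoothness, effectivity, Ferrand-type gluing) do agree in substance with the paper's verification of the remaining Artin conditions.
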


\begin{remark}
    Assume that $X_0$ is the formal $p$-divisible group over $\F$ corresponding to $\calP_0$.
    Then by \cite{ZinkDisplay} and \cite{LauInventiones} we have 
    \[
        \mathcal{M}^{\RZ} \simeq \mathcal{M}^{\Zink} \, .
    \]
    The representability of $\mathcal{M}^{\RZ}$ in the case when $X_0$ is formal thus follows from \Cref{thm: Main theorem}.
    In the original statement \cite[Thm. 2.16]{RZ} of Rapoport-Zink they assume that the iso-crystal of $X_0$ is decent.
    We do not need this assumption and our arguments also imply that this assumption is not necessary in their statement.

    Using a result of Fargues \cite[Prop. 5.5]{FarguesMinimality} that expresses the Rapoport-Zink space for a $p$-divisible group $X_0$ via the universal object over the Rapoport-Zink space for the formal part $X_0^{\circ}$, one may deduce \Cref{thm: thm of RZ} from \Cref{thm: Main theorem} in general.
\end{remark}

Our argument has roughly the same structure as the original proof given by Rapoport-Zink, but the basic source of representability is fairly different.

\subsubsection*{Recollection on Rapoport-Zink's proof}

Fix a lift $\widetilde{X}_0$ of $X_0$ to a $p$-divisible group over $W(\F)$.
Then we can define the closed subfunctors
\[
    \mathcal{M}^{\RZ, r} \subseteq \calM^{\RZ}
\]
that are given by the condition that $p^{r} \rho$ lifts to an isogeny $\widetilde{X}_{0, R} \to X$.
These $\calM^{\RZ, r}$ decompose as disjoint unions of further subfunctors $\mathcal{M}^{\RZ, r,s}$, where one also fixes the height of the isogeny $p^{r} \rho$ to be $s$.

Now the $\calM^{\RZ, r, s}$ can be shown to be $p$-adic completions of closed subschemes of Grassmannians:
Giving a point $(X, \rho) \in \calM^{r, s}(R)$ is equivalent of giving the kernel $\ker(p^r \rho)$ and this kernel is a finite locally free subgroup scheme of the $p^s$-torsion $X_0[s]_R$.
In other words, the information about a point in $\calM^{\RZ, r, s}$ is already stored inside $X_0[s]$ and this is the basic reason why the spaces $\mathcal{M}^{\RZ, r,s}$ will have nice finiteness properties.

Using a boundedness result on the Bruhat-Tits building \cite[Prop. 2.18]{RZ} (where their assumption that the iso-crystal of $X_0$ is decent is crucial, cf.\ \cite[Remark 2.20(i)]{RZ}) Rapoport and Zink then prove that $\calM^{\RZ}$ is a formal scheme locally formally of finite type.
This reduction step is the hardest and most technical part of the proof.

\subsubsection*{Overview of our proof}

Similarly as above we start by fixing a lift $\widetilde{\calP}_0$ of $\calP_0$ to a displays over $W(\F)$.
Then one can define subfunctors $\calM^r$ and $\calM^{r, s}$ similarly as above.

Now while the reduction step works similarly as in the original proof, the representability of $\calM^{r, s}$ is not at all clear.
The problem is for example that an isogeny of nilpotent displays
\[
    f\colon \mathcal{P} \rightarrow \widetilde{\mathcal{P}}_{0, R}
\]
might fail to be injective on the level of underlying modules when $W(R)$ contains $p$-torsion and even if $R$ is reduced, so that $f$ is determined by $\coker(f)$, it is not clear that this data descends along filtered colimits:
$W(R)/p^s$ is bigger than $W_s(R)$ and $\coker(f)$ is only a $W(R)/p^s$-module (and not a very well-behaved one).
Therefore it seems difficult to directly adapt the argument involving $p$-divisible groups.

The idea is instead to apply Artin's criterion for representability to the functor $\mathcal{M}^r$ and thereby show that these are $p$-adic formal algebraic spaces locally of finite type over $\Spf(W(\F))$.
To finally deduce that $\calM^r$ is indeed a $p$-adic formal scheme one can then use that the perfection of its special fiber is a perfect scheme by a result of Bhatt-Scholze \cite{BhattScholzeWitt}.

As alluded to above, the most difficult step in the application of Artin's criterion is to show that the functor under consideration is locally of finite presentation, i.e.\ commutes with filtered colimits; this is a priori not clear since one is working with finite projective modules over rings of Witt vectors and Witt vectors do not commute with filtered colimits.
To address this difficulty, it is maybe not too surprising that we make use of truncated displays (as developed by Lau \cite{LauInventiones}, \cite{LauSmoothness} and Lau-Zink \cite{LauZinkTruncated}), since these will then be certain finite projective modules over rings of truncated Witt vectors and therefore here one certainly does not have the problem of commuting colimits.

The passage to truncated displays a priori looses a lot of information and the idea is to show that if one truncates high enough, a sufficient amount of information is already stored in the truncation.
Here a critical inspiration was the homomorphism version of Vasiu's crystalline boundedness principle \cite[Theorem 5.1.1 ]{VasiuBoundedness} which says that over an algebraically closed field of characteristic $p$ morphisms between truncated Dieudonné-modules of the same height lift to morphisms of Dieudonné-modules, provided the level of truncation is higher than a uniform constant.

The key technical statement that we use to show the desired finiteness property of $\mathcal{M}^{r}$ is \Cref{lem:extending-adjoint-matrix}, which roughly says that isogenies of $d$-truncated displays
\[
    f \colon \mathcal{P}_{d} \rightarrow \widetilde{\mathcal{P}}[d]_{0,R}
\]
extend fpqc-locally on $\Spec(R)$ to isogenies of non-truncated displays provided $d$ is suffiently big.
We first solve this problem by hand in characteristic $p$ and then use Grothendieck-Messing deformation theory to deal with the general case.

The verification of the other criteria in Artin's result is a rather straightforward application of the deformation theory of displays.

As a crucial technical input for our strategy, we also prove some foundational results on the behaviour of isogenies and quasi-isogenies of nilpotent displays independently of the theory of $p$-divisible groups.
Namely, we show that homomorphisms between nilpotent displays are $p$-torsionfree, see \Cref{prop:p-torsionfree}, and that the locus where a quasi-isogeny of nilpotent displays is an isogeny is cut out by finitely many equations, see \Cref{cor:locus-isogeny}.

Another result worth mentioning is that we prove the ampleness of the Hodge bundle on $\calM$, after restricting to any quasi-compact closed subscheme, see \Cref{prop: Local ampelness}.
This is a $p$-adic analogue of a result of Moret-Bailly \cite{Moret-BaillyPinceaux}.

\subsection{Further directions}

This text should be read as a proof of concept; we hope that our approach can be generalized to the setting of $\mathcal{G}$-$\mu$-displays.
At least in Hodge-type cases this seems very possible.
Even beyond that, several of our steps very likely generalize to $\mathcal{G}$-$\mu$-displays for arbitrary reductive $\mathcal{G}$.
However there is (at least) one key question that still has to be overcome in that generality:
How does one define the analogue of the subfunctor $\mathcal{M}^{r}$?

In another direction, it would be interesting to see whether one could run a similar argument with some version of prismatic objects to make the approach for displays that we use here also work in that setup to define and construct an appropriate moduli problem. 

We hope to address both questions in the future.

\subsection{Acknowledgments}

S.B. wants to thank Oliver Bültel, Laurent Fargues, Ulrich Görtz, Eike Lau and George Pappas for exchanges about these ideas and their interest.
He thanks especially Oliver Bültel for feedback on previous versions of this text and having suggested a different argument for \Cref{cor:locus-isogeny}, which worked in the Noetherian case.
He also heartily thanks Thomas Zink who once mentioned the crystalline boundedness principle some time ago.
M.H.\ thanks Eike Lau for very useful discussions and Ulrich Görtz for proofreading an earlier version of this article.

While working on this manuscript S.B. was part of the DFG RTG 2553 and M.H.\ was funded by the  SFB-TRR 358 - 491392403.

    \section{Generalities on displays and truncated displays}

        \subsection{Frames and windows}
        In this section we introduce notions of frames and windows over such frames.
We warn the reader that these notions differ from the one given in \cite[Definition 2.1 and Definition 2.3]{LauFrames}; the main difference is that the ideal $I \subseteq S$ in Lau's definition is replaced by an $S$-module $I$ together with a homomorphism $I \to S$.
Its purpose is to conveniently and uniformly phrase the definitions of displays and truncated displays.

\begin{definition}
    A \emph{ring with filtration} is a tuple $(\calS, \calI)$ that is given as follows:
    \begin{itemize}
        \item
        $\calS$ is a ring.

        \item
        $\calI$ is an $\calS$-module that is equipped with an $\calS$-linear map $\calI \to \calS$ whose image is contained in the Jacobson radical $\rad(\calS)$.
    \end{itemize}

    Now fix a ring with filtration $(\calS, \calI)$.
    Then a \emph{finite projective $(\calS, \calI)$-module} is a tuple $\calP = (T, L)$ consisting of finite projective $\calS$-modules $T$ and $L$.
    The \emph{dimension} of $\calP$ is the rank of $T$, its \emph{codimension} is the rank of $L$ and its \emph{height} is the sum of both ranks.
    Given two finite projective $(\calS, \calI)$-modules $\calP = (T, L)$ and $\calP' = (T', L')$ we define
    \[
        \Hom_{(\calS, \calI)}(\calP, \calP') \coloneqq
        \set[\Bigg]{
            \begin{bmatrix}
                a & b \\ c & d
            \end{bmatrix}
        }
        {
        \begin{gathered}
            a \colon T \to T', \quad
            b \colon L \to \calI \otimes_{\calS} T',
            \\
            c \colon T \to L', \quad
            d \colon L \to L'
        \end{gathered}
        } \, .
    \]
\end{definition}

\begin{remark} \label{rmk:s-i-modules}
    \begin{itemize}
        \item
        The condition that the image of $\calI \to \calS$ is contained in $\rad(\calS)$ guarantees that the dimension and the codimension of an $(\calS, \calI)$-module is well-defined, i.e.\ invariant under isomorphism.

        \item
        We have a natural forgetful functor
        \[
        \begin{array}{r c l}
            \curlybr[\big]{\text{finite projective $(\calS, \calI)$-modules}}
            & \longrightarrow
            & \curlybr[\big]{\text{finite projective $\calS$-modules}},
            \\[1ex]
            \calP = (T, L)
            & \longmapsto
            & \calS \otimes_{(\calS, \calI)} \calP \coloneqq T \oplus L \, .
        \end{array}
        \]

        \item
        We try to forget that an $(\calS, \calI)$-module $\calP$ is given by a tuple $(T, L)$.
        Instead we say that a \emph{normal decomposition} of $\calP$ is an isomorphism of $(\calS, \calI)$-modules $\calP \simeq (T, L)$ for some finite projective $\calS$-modules $T$ and $L$, but there are typically many choices for such a normal decomposition and no canonical one.
    \end{itemize}
\end{remark}

\begin{example}
    Let $R$ be a ring and consider the ring with filtration $(R, 0)$.
    Then the category of finite projective $(R, 0)$-modules is equivalent to the category
    \[
        \FilVect(R)
    \]
    of filtered finite projective $R$-modules.
    Objects in this category are finite projective $R$-modules that are equipped with a direct summand and morphisms are homomorphisms of $R$-modules that are compatible with these direct summands.

    When $B \to A$ is a thickening, i.e.\ a surjective ring homomorphism with nilpotent kernel $\fraka \subseteq B$, then we can also consider the ring with filtration $(B, \fraka)$.
    The category of finite projective $(B, \fraka)$-modules then is equivalent to the category
    \[
        \FilVect(B/A)
    \]
    of finite projective $B$-modules whose base change to $A$ is equipped with a direct summand.
\end{example}

\begin{definition}
    A \emph{frame} is a tuple $(\calS, \calI, S, \sigma, \sigma^{\divd})$ that is given as follows:
    \begin{itemize}
        \item
        $(\calS, \calI)$ is a ring with filtration.

        \item
        $S$ is a quotient of $\calS$.

        \item
        $\sigma \colon \calS \to S$ is a ring homomorphism.

        \item
        $\sigma^{\divd} \colon \calI \to S$ is a $\sigma$-linear map such that $p \cdot \sigma^{\divd}(x) = \sigma(x)$ for all $x \in \calI$.
    \end{itemize}
\end{definition}

In the following $\calF = (\calS, \calI, S, \sigma, \sigma^{\divd})$ always denotes a frame.
Given an $\calS$-module $M$ we write $M^{\sigma} = S \otimes_{\sigma, \calS} M$ for its base change along $\sigma$.
Given a homomorphism of $\calS$-modules $f \colon M \to N$ we similarly write $f^{\sigma} \colon M^{\sigma} \to N^{\sigma}$ for the induced homomorphism.

Finally, given finite projective $\calS$-modules $M$ and $N$ and an $\calS$-linear map $f \colon M \to \calI \otimes_{\calS} N$ we write $f^{\sigma, \divd}$ for the composition
\[
    f^{\sigma, \divd} \colon M^{\sigma} \xrightarrow{f^{\sigma}} \calI^{\sigma} \otimes_S N^{\sigma} \xrightarrow{\sigma^{\divd, \sharp} \otimes \id} N^{\sigma},
\]
where $\sigma^{\divd, \sharp} \colon \calI^{\sigma} \to S$ denotes the linearization of $\sigma^{\divd}$.

\begin{lemma} \label{lem:modification}
    There is a natural functor
    \begin{gather*}
        \curlybr[\big]{\text{finite projective $(\calS, \calI)$-modules}} \to \curlybr[\big]{\text{finite projective $S$-modules}},
        \\
        \calP = (T, L) \mapsto \calP^{\sigma} \coloneqq T^{\sigma} \oplus L^{\sigma}, \qquad
        \begin{bmatrix}
            a & b \\ c & d
        \end{bmatrix}
        \mapsto
        \begin{bmatrix}
            a^{\sigma} & b^{\sigma, \divd} \\ p \cdot c^{\sigma} & d^{\sigma}
        \end{bmatrix} \, .
    \end{gather*}
\end{lemma}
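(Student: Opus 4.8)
The statement asserts that the assignment $\calP \mapsto \calP^\sigma$ on objects, together with the prescribed formula on $\Hom$-sets, defines a functor. The plan is to verify three things: (i) the formula sends a morphism in $\Hom_{(\calS,\calI)}(\calP,\calP')$ to a genuine $S$-linear map $\calP^\sigma \to \calP'^\sigma$, i.e.\ the four entries of the output matrix have the correct source and target; (ii) the identity morphism is sent to the identity; (iii) composition is respected. Step (i) is essentially bookkeeping: given $\left[\begin{smallmatrix} a & b \\ c & d\end{smallmatrix}\right]$ with $a\colon T\to T'$, $b\colon L\to \calI\otimes_\calS T'$, $c\colon T\to L'$, $d\colon L\to L'$, we have $a^\sigma\colon T^\sigma\to T'^\sigma$, $d^\sigma\colon L^\sigma\to L'^\sigma$, $p\cdot c^\sigma\colon T^\sigma\to L'^\sigma$ trivially, and $b^{\sigma,\divd}\colon L^\sigma\to T'^\sigma$ by the very definition of $(-)^{\sigma,\divd}$ recalled just before the lemma (applied with $M=L$, $N=T'$). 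So the output is indeed an endomorphism-matrix of $T'^\sigma\oplus L'^\sigma = \calP'^\sigma$.

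For (ii), the identity of $\calP=(T,L)$ is $\left[\begin{smallmatrix} \id_T & 0 \\ 0 & \id_L\end{smallmatrix}\right]$, which maps to $\left[\begin{smallmatrix} \id_{T^\sigma} & 0 \\ 0 & \id_{L^\sigma}\end{smallmatrix}\right] = \id_{\calP^\sigma}$, using $0^{\sigma,\divd}=0$ and $p\cdot 0^\sigma = 0$.

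The only genuinely computational point is (iii), functoriality with respect to composition, and this is where the slightly unusual normalization ($p\cdot c^\sigma$ in the lower-left, $b^{\sigma,\divd}$ in the upper-right) has to be checked to be consistent. Writing $M=\left[\begin{smallmatrix} a & b \\ c & d\end{smallmatrix}\right]$ and $M'=\left[\begin{smallmatrix} a' & b' \\ c' & d'\end{smallmatrix}\right]$ for composable morphisms, the composite $M'M$ in $\Hom_{(\calS,\calI)}$ is computed using that the "product" $b'\circ c$ of a map $L\to\calI\otimes T'$ with a map $T'\to L''$ lands in $\calI\otimes_\calS L''$ (i.e.\ one applies $\id_\calI\otimes c'$-type maps), and similarly for the other mixed terms; the entries are $a'a + (\text{contraction of } b' \text{ with } c)$ in the top-left, etc. One then compares $(M'M)^\sigma$ with $M'^\sigma M^\sigma$ entry by entry. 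The four identities to check amount to the following compatibilities of the twist operations: $(a'\circ a)^\sigma = a'^\sigma\circ a^\sigma$ and $(d'\circ d)^\sigma = d'^\sigma\circ d^\sigma$ (functoriality of base change along $\sigma$), the relation that contracting $b$ against $c'$ and then applying $(-)^{\sigma}$ matches $b^{\sigma,\divd}$ precomposed with $p\cdot c'^\sigma$ — here the factor $p$ is exactly absorbed by the defining relation $p\cdot\sigma^\divd(x)=\sigma(x)$ — and the symmetric statement contracting $b'$ against $a$, where instead one uses $\sigma$-linearity of $\sigma^\divd$ to move $a$ across. Concretely, the key lemma underlying all of this is: for $g\colon M\to\calI\otimes_\calS N$ and $h\colon N\to N'$ (both $\calS$-linear), one has $((\id_\calI\otimes h)\circ g)^{\sigma,\divd} = h^\sigma\circ g^{\sigma,\divd}$; and for $e\colon M'\to M$, $(g\circ e)^{\sigma,\divd} = g^{\sigma,\divd}\circ e^\sigma$; and for the "divided Frobenius times $p$" the identity $(f)^\sigma = p\cdot f^{\sigma,\divd}$ when $f\colon M\to\calI\otimes_\calS N$ is post-composed with the structure map $\calI\to\calS$. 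These follow immediately from unwinding $\sigma^{\divd,\sharp}$ and the axiom $p\cdot\sigma^\divd = \sigma|_\calI$.

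**Main obstacle.** There is no serious obstacle; the statement is formal. The only place demanding care is getting the bookkeeping of the mixed products $b'\circ c$ and $b\circ a'$ right — specifically, tracking where the factor of $p$ appears and checking it is consumed correctly by the frame axiom $p\cdot\sigma^\divd(x)=\sigma(x)$ — so that the composition law in the target category of finite projective $S$-modules is reproduced on the nose. Once the auxiliary compatibilities of $(-)^\sigma$ and $(-)^{\sigma,\divd}$ above are recorded, the verification is a short matrix computation.
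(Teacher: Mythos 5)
Your verification is correct and coincides with what the paper intends: the paper's proof is just "this is a direct computation," and your write-up carries out exactly that computation, with the key point correctly identified as the factor $p$ in the lower-left entry being absorbed by the frame axiom $p \cdot \sigma^{\divd}(x) = \sigma(x)$ (via the structural map $\calI \to \calS$) when matching the mixed terms of a composite. No gaps; the auxiliary compatibilities of $(-)^{\sigma}$ and $(-)^{\sigma, \divd}$ you record are precisely what the entry-by-entry check requires.
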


\begin{proof}
    This is a direct computation.
\end{proof}

\begin{definition}
    A \emph{window over $\calF$} is a finite projective $(\calS, \calI)$-module $\calP$ that is equipped with an isomorphism of $S$-modules $\Psi \colon \calP^{\sigma} \to S \otimes_{(\calS, \calI)} \calP$.

    A morphism $\calP \to \calP'$ of windows over $\calF$ is a morphism of the underlying finite projective $(\calS, \calI)$-modules such that the diagram
    \[
    \begin{tikzcd}
        \calP^{\sigma} \ar[r] \ar[d, "\Psi"]
        & \calP'^{\sigma} \ar[d, "\Psi'"]
        \\
        S \otimes_{(\calS, \calI)} \calP \ar[r]
        & S \otimes_{(\calS, \calI)} \calP'
    \end{tikzcd}
    \]
    is commutative.
\end{definition}

\begin{remark}
    Let $\calP$ be a window over $\calF$ and suppose that it has a free normal decomposition $(T, L)$.
    After choosing bases of $T$ and $L$ the window $\calP$ is then represented by an invertible matrix
    \[
        U =
        \begin{bmatrix}
            W & X \\
            Y & Z
        \end{bmatrix}
        \in \GL_h(S),
    \]
    where $h$ is the height of $\calP$.

    Given two windows $\calP$ and $\calP'$ over $\calF$ that are represented by matrices $U \in \GL_h(S)$ and $U' \in \GL_{h'}(S)$ a morphism $\calP \to \calP'$ is then represented by a matrix
    \[
        M =
        \begin{bmatrix}
            A & B \\
            C & D
        \end{bmatrix}
        \in \Mat_{h' \times h}(\calS)_{\mu} \coloneqq
        \begin{bmatrix}
            \Mat_{d' \times d}(\calS)
            & \Mat_{d' \times c}(\calI)
            \\
            \Mat_{c' \times d}(\calS)
            & \Mat_{c' \times c}(\calS)
        \end{bmatrix},
    \]
    where $d$, $d'$ and $c$, $c'$ are the dimensions respectively the codimensions of $\calP$ and $\calP'$, that satisfies the equation
    \[
        M \cdot U = U' \cdot \Phi(M) \in \Mat_{h' \times h}(S),
    \]
    where
    \[
        \Phi(M) \coloneqq
        \begin{bmatrix}
            \sigma(A) & \sigma^{\divd}(B) \\
            p \cdot \sigma(C) & \sigma(D)
        \end{bmatrix}
        \in \Mat_{h' \times h}(S) \, .
    \]
\end{remark}

        \subsection{Displays}
        Here we recall the definitions of displays and truncated displays from \cite{ZinkDisplay} and \cite{LauZinkTruncated}.

\medskip

Let $R$ be a $p$-nilpotent ring.
We then have the ring $W(R)$ of ($p$-typical) Witt vectors of $R$ that is equipped with the Frobenius and the Verschiebung.
We write $I_R \subseteq W(R)$ for the kernel of the natural projection $W(R) \to R$ and $F^{\divd} \colon I_R \to W(R)$ for the inverse of the Verschiebung.
These data together give the \emph{Witt frame}
\[
    \calF^{\Witt}_R \coloneqq \roundbr[\big]{W(R), I_R, W(R), F, F^{\divd}} \, .
\]
Similarly write $W_n(R)$ for the ring of $n$-truncated Witt vectors and $I_{n, R} \subseteq W_n(R)$ for its augmentation ideal, and also define
\[
    \calW_n(R) \coloneqq W_{n + 1}(R)/\roundbr[\big]{0, \dotsc, 0, R[p]},
\]
where $R[p] \subseteq R$ denotes the $p$-torsion in $R$.
Then the Frobenius on $W(R)$ induces a ring homomorphism $F \colon \calW_n(R) \to W_n(R)$ and the $W_{n + 1}(R)$-module structure on $I_{n + 1, R}$ factors through $\calW_n(R)$, so that we obtain the \emph{$n$-truncated Witt frame}
\[
    \calF^{\Witt, n}_R \coloneqq \roundbr[\big]{\calW_n(R), I_{n + 1, R}, W_n(R), F, F^{\divd}} \, .
\]

Giving a \emph{(not necessarily nilpotent) display} over $R$ in the sense of \cite{ZinkDisplay} is the same as giving a window over $\calF^{\Witt}_R$.
Similarly, giving an \emph{$n$-truncated display} over $R$ in the sense of \cite{LauZinkTruncated} is the same as giving a window over $\calF^{\Witt, n}_R$.
We write $\Disp(R)$ respectively $\Disp_n(R)$ for the category of displays respectively $n$-truncated displays over $R$.

\begin{remark}
    We tried to give a computationally practicable definition for (truncated) displays.

    The downside of this approach is that our finite projective $(\calS, \calI)$-modules always come with a chosen normal decomposition, as was explained in \Cref{rmk:s-i-modules}, but on the other hand we avoid using the definition of \emph{truncated pairs} given in \cite[Definition 3.2]{LauSmoothness} that might appear rather involved.

    Phrasing the definition of a (truncated) display in terms of the Frobenius-twisted modification $\calP \mapsto \calP^{\sigma}$ from \Cref{lem:modification} is similar to \cite[Definition 2.9]{hoff-ekor-siegel} and is inspired by \cite[Lemma 3.1.5]{kisin-pappas}.
\end{remark}

Now let $B \to A$ be a pd-thickening of $p$-nilpotent rings with kernel $\fraka \subseteq B$.
Write $I_{B/A} \subseteq W(B)$ for the kernel of the projection $W(B) \to A$ and similarly write $I_{n, B/A}$ for the kernel of $W_n(B) \to A$.
The given pd-structure on $\fraka$ induces extensions of $F^{\divd}$ to $F$-linear homomorphisms
\[
    F^{\divd} \colon I_{B/A} \to W(B)
    \quad \text{and} \quad
    F^{\divd} \colon I_{n + 1, B/A} \to W_n(B),
\]
see \cite[Section 2]{LauZinkTruncated}.
Thus we can define the \emph{relative Witt frame}
\[
    \calF^{\Witt}_{B/A} \coloneqq \roundbr[\big]{W(B), I_{B/A}, W(B), F, F^{\divd}}
\]
and the \emph{$n$-truncated relative Witt frame}
\[
    \calF^{\Witt, n}_{B/A} \coloneqq \roundbr[\big]{\calW_n(B), I_{n + 1, B/A}, W_n(B), F, F^{\divd}}.
\]
A \emph{relative display} for $B/A$ is a window over $\calF^{\Witt}_{B/A}$ and an \emph{$n$-truncated relative display} for $B/A$ is a window over $\calF^{\Witt, n}_{B/A}$.
We write $\Disp(B/A)$ and $\Disp_n(B/A)$ for the categories of relative displays and $n$-truncated relative displays for $B/A$.

\begin{definition}
    Let $\calP$ be a display over $R$, choose a normal decomposition $(T, L)$ and write
    \[
        \Psi^{-1} =
        \begin{bmatrix}
            \breve{w} & \breve{x} \\
            \breve{y} & \breve{z}
        \end{bmatrix}
        \colon T \oplus L \to T^F \oplus L^F \, .
    \]
    Then we say that $\calP$ is \emph{nilpotent of nilpotence order less or equal than $e$} if the composition
    \[
        \breve{z}^{(p^{e - 1})} \circ \dotsb \circ \breve{z} \colon (R/pR) \otimes_{W(R)} L \to \roundbr[\Big]{(R/pR) \otimes_{W(R)} L}^{(p^e)}
    \]
    vanishes (this is independent of the chosen normal decomposition), see \cite[Definition 1.11]{LauZinkTruncated}.
    Similarly we also define the nilpotence order of an $n$-truncated display.

    Let $\calP$ be an ($n$-truncated) relative display for $B/A$.
    Then we say that $\calP$ is \emph{nilpotent of nilpotence order less or equal than $e$} if the associated ($n$-truncated) display $\calP_A$ over $A$ has this property.

    We write
    \[
        \Disp^{(e)}(R), \qquad
        \Disp_n^{(e)}(R), \qquad
        \Disp^{(e)}(B/A), \qquad
        \Disp_n^{(e)}(B/A)
    \]
    for the category of displays that are nilpotent of nilpotence order less or equal than $e$ and its truncated and relative variants.
    Similarly we write
    \[
        \Disp^{\nilp}(R), \qquad
        \Disp^{\nilp}(B/A)
    \]
    for the category of (relative) displays that are nilpotent of some nilpotence order.
\end{definition}

Later we will need the following technical statement.

\begin{lemma}\label{lem: abgeschnittene Displays ueber Kolimes sind Kolimes}
    Let $R=\colim_{i\in I}R_{i}$ be a filtered colimit of $p$-nilpotent rings.
    Then the natural functor
    \[
        \colim_{i \in I} \Disp_n(R_i) \to \Disp_n(R)
    \]
    is an equivalence.
    More precisely, this means the following:
    \begin{itemize}
        \item
        Let $\mathcal{P}$ be an $n$-truncated display over $R$.
        Then there exists $i \in I$ and an $n$-truncated display $\mathcal{P}_i$ over $R_i$ such that $\mathcal{P}_{i, R} \simeq \mathcal{P}$.
        
        \item
        Let $\calP$ and $\calP'$ be $n$-truncated displays over $R_i$, for some $i \in I$, and let $f \colon \calP_R \to \calP'_R$ be a morphism of displays over $R$.
        Then there exists $j \in I$ with $j \geq i$ and a morphism $f_j \colon \calP_{R_j} \to \calP'_{R_j}$ such that we have $f_{j, R} = f$.
        
        \item
        Let $f, g \colon \calP \to \calP'$ be morphisms of $n$-truncated displays over $R_i$ for some $i \in I$ such that $f_R = g_R$.
        Then there exists $j \in I$ with $j \geq i$ such that $f_{R_j} = g_{R_j}$.
    \end{itemize}
\end{lemma}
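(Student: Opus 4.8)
The plan is to reduce the statement to the standard limit formalism for finitely presented and finite projective modules over a filtered colimit of rings, after observing that the frame $\calF^{\Witt,n}_R$ is obtained from $R$ by operations that commute with filtered colimits. First I would record, for each fixed $m$, that $R \mapsto W_m(R)$ commutes with filtered colimits: its underlying set is $R^m$ and the ring structure, as well as $F$ and $V$, are given by finitely many universal polynomials. Since filtered colimits are exact, the same then holds for $I_{n+1,R} = \ker(W_{n+1}(R)\to R)$ and for $R[p]=\ker(p\colon R\to R)$, hence for $\calW_n(R) = W_{n+1}(R)/(0,\dots,0,R[p])$, for $\sigma = F\colon \calW_n(R)\to W_n(R)$, and for the structure map $\calI = I_{n+1,R}\to \calW_n(R)$. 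Writing $\calS = \calW_n(R)$, $S = W_n(R)$, $\calI = I_{n+1,R}$ (and $\calS_i$, $S_i$, $\calI_i$ for the same applied to $R_i$), we thus obtain $\calS = \colim_i\calS_i$, $S=\colim_i S_i$, $\calI = \colim_i\calI_i$ compatibly with all structure maps; moreover $\calI$ is a finitely presented $\calS$-module, being cyclic, generated by the Verschiebung of $1$.

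Next I would invoke the limit formalism: for a filtered colimit of rings $S=\colim_i S_i$, (i) every finite projective $S$-module descends to a finite projective $S_i$-module for some $i$ (present it as the image of an idempotent in $\Mat_N(S) = \colim_i\Mat_N(S_i)$); (ii) for finite projective $S_i$-modules $P_i,Q_i$ one has $\Hom_S(P,Q) = \colim_{j\ge i}\Hom_{S_j}(P_j,Q_j)$, so that every $S$-homomorphism descends and two $S_i$-homomorphisms agreeing over $S$ agree over some $S_j$; and (iii) an $S_i$-homomorphism of finite projective modules that is an isomorphism over $S$ is an isomorphism over some $S_j$ (descend the inverse, then descend the two identities). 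The only point requiring a moment's care is that in $\Hom_{(\calS,\calI)}(\calP,\calP')$ the entry $b$ lands in $\calI\otimes_\calS T'$, which is finitely presented but not projective; but since the source $L$ is finite projective, hence finitely presented, the formula $\Hom_\calS(L,\calI\otimes_\calS T') = \colim_{j\ge i}\Hom_{\calS_j}(L_j,\calI_j\otimes_{\calS_j}T'_j)$ still holds.

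Finally I would assemble the pieces. Given $\calP\in\Disp_n(R)$ with normal decomposition $(T,L)$ and structure isomorphism $\Psi\colon\calP^\sigma\to S\otimes_{(\calS,\calI)}\calP$, apply (i) to descend $T$ and $L$ to finite projective $\calS_j$-modules; since $\sigma=F$ is compatible with the colimit, both $\calP^\sigma$ and $S\otimes_{(\calS,\calI)}\calP$ then descend as well, so by (ii)+(iii) the isomorphism $\Psi$ descends to an isomorphism over $S_j$ after enlarging $j$ — this produces $\calP_j\in\Disp_n(R_j)$ with $\calP_{j,R}\simeq\calP$ and gives the first bullet. For the second bullet, a morphism $f\colon\calP_R\to\calP'_R$ of windows is an underlying $(\calS,\calI)$-module morphism — a matrix whose four entries are homomorphisms of finite projective $\calS$-modules, which descend by (ii) — subject to the window compatibility, which is an identity of two $S$-homomorphisms $\calP^\sigma\to S\otimes_{(\calS,\calI)}\calP'$ and hence holds over some $R_j$, again by (ii); so $f$ descends. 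The third bullet is immediate from the faithfulness part of (ii) applied to the underlying $(\calS,\calI)$-module morphisms. The main obstacle — indeed essentially the only nontrivial point — is the bookkeeping in the first paragraph: checking carefully that $\calW_n(-)$, and not merely $W_m(-)$, commutes with filtered colimits despite the quotient by $R[p]$, and that $\calI=I_{n+1,R}$ is finitely presented over $\calW_n(R)$, so that the limit formalism applies verbatim; the rest is routine.
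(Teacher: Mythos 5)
Your proof is correct and is exactly the paper's argument: the paper's own proof consists of the observation that $\calF^{\Witt, n}_R = \colim_{i} \calF^{\Witt, n}_{R_i}$ together with a citation of the standard limit formalism for finitely presented and finite projective modules over a filtered colimit of rings, which is precisely what you spell out. One small inaccuracy worth flagging: $I_{n+1,R}$ is in general \emph{not} cyclic over $\calW_n(R)$ generated by $V(1)$, since the module structure is $w \cdot V(x) = V(F(w)x)$, so the submodule generated by $V(1)$ is $V\roundbr[\big]{F(W_{n+1}(R))}$, which is proper already for $R = \Fp[t]$ --- but this is harmless, because (as you yourself note) the descent of the component $b \colon L \to \calI \otimes_{\calS} T'$ only requires finite presentation of the \emph{source} $L$ together with the identification $\calI \otimes_{\calS} T' = \colim_{j} \roundbr[\big]{\calI_j \otimes_{\calS_j} T'_j}$, which holds simply because tensor products commute with filtered colimits.
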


\begin{proof}
    This follows formally from the fact that $\calF_{\calW_n(R)} = \colim_{i \in I} \calF_{\calW_{n}(R_i)}$ and \cite[Tag 05LI and Tag 05N7]{stacks}.
\end{proof}

        \subsection{Deformation theory}
        Next we recall the deformation theory of nilpotent (truncated) displays.

\begin{lemma} \label{prop:deformation-theory}
    Let $B \to A$ be a pd-thickening of $p$-nilpotent rings with kernel $\fraka \subseteq B$.
    Then the natural diagrams
    \[
    \begin{tikzcd}
        \Disp(B) \ar[r] \ar[d]
        & \Disp(B/A) \ar[d]
        \\
        \FilVect(B) \ar[r]
        & \FilVect(B/A)
    \end{tikzcd}
    \quad \text{and} \quad
    \begin{tikzcd}
        \Disp_n(B) \ar[r] \ar[d]
        & \Disp_n(B/A) \ar[d]
        \\
        \FilVect(B) \ar[r]
        & \FilVect(B/A)
    \end{tikzcd}
    \]
    are $2$-Cartesian; here the vertical functors are induced by the morphism of rings with filtrations
    \[
        \roundbr[\big]{W(B), I_B} \to (B, 0), \qquad \roundbr[\big]{W(B), I_{B/A}} \to (B, \fraka)
    \]
    and the obvious truncated analogues.
\end{lemma}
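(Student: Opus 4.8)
The plan is to discard the Frobenius structure, reduce the assertion to a purely module-theoretic square, observe that this square is $2$-Cartesian for essentially formal reasons, and then reinstate the Frobenius; the point will be that the genuine deformation-theoretic input --- the extension of $F^{\divd}$ along the divided powers --- has already been made in setting up the relative Witt frame, so that the lemma itself is formal.

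The first step is an intrinsic description of finite projective $(\calS, \calI)$-modules. For a ring with filtration $(\calS, \calI)$ put $\overline{\calS} \coloneqq \calS/\mathrm{im}(\calI \to \calS)$; since $\mathrm{im}(\calI) \subseteq \rad(\calS)$, every direct summand of $\overline{\calS} \otimes_{\calS} P$ lifts to a direct summand of a finite projective $\calS$-module $P$. Using this one checks that $\calP = (T, L) \mapsto (T \oplus L,\ \mathrm{im}(L \to \overline{\calS} \otimes_{\calS}(T \oplus L)))$ is an equivalence from finite projective $(\calS, \calI)$-modules onto the category whose objects are pairs $(P, \overline{L})$ with $P$ finite projective over $\calS$ and $\overline{L} \subseteq \overline{\calS} \otimes_{\calS} P$ a direct summand, a morphism $(P, \overline{L}) \to (P', \overline{L}')$ being an $\calS$-linear map $f$ with $(\overline{\calS} \otimes_{\calS} f)(\overline{L}) \subseteq \overline{L}'$; the constraint $b \colon L \to \calI \otimes_{\calS} T'$ on morphisms of $(\calS, \calI)$-modules says precisely that the induced map on $\overline{\calS} \otimes (-)$ respects these filtrations. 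Since $W(B)/I_B = B$, $W(B)/I_{B/A} = A$ and $B/\fraka = A$ (and in the truncated case $\calW_n(B)/\mathrm{im}(I_{n + 1, B}) = B$, $\calW_n(B)/\mathrm{im}(I_{n + 1, B/A}) = A$), under this description both squares of the statement become the square whose top-left corner is the category of pairs $(P, \overline{L})$ with $\overline{L} \subseteq B \otimes_{W(B)} P$ a direct summand, whose top-right corner is the same with $\overline{L} \subseteq A \otimes_{W(B)} P$, whose bottom-left corner is $\FilVect(B)$ and whose bottom-right corner is $\FilVect(B/A)$; here $P$ ranges over finite projective $W(B)$-modules (resp.\ $\calW_n(B)$-modules), the horizontal functors base change the filtration along $B \to A$, and the vertical functors send $P$ to $B \otimes_{W(B)} P$ equipped with its filtration.

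That this module square is $2$-Cartesian is then formal. An object of the $2$-fibre product is a tuple consisting of $P$, a summand $\overline{L} \subseteq A \otimes_{W(B)} P$, an object $(N, \overline{M} \subseteq N)$ of $\FilVect(B)$, and a $B$-linear isomorphism $\varphi \colon B \otimes_{W(B)} P \xrightarrow{\sim} N$ with $(A \otimes \varphi)^{-1}(A \otimes_B \overline{M}) = \overline{L}$; transporting $\overline{M}$ back through $\varphi$ to the direct summand $\varphi^{-1}(\overline{M}) \subseteq B \otimes_{W(B)} P$ exhibits this datum as nothing but a direct summand of $B \otimes_{W(B)} P$, and one verifies directly that this is quasi-inverse to the functor from the top-left corner --- the key point being that one may always take the comparison isomorphism over $B/A$ to be the identity on the underlying module $P$ (and $\varphi$ on the $\FilVect(B)$-side). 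The corresponding bookkeeping for morphisms is immediate.

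It remains to reinstate the Frobenius. A display over $B$ (resp.\ a relative display for $B/A$, and likewise in the truncated setting) is a finite projective module $\calP = (T, L)$ over $(W(B), I_B)$ (resp.\ over $(W(B), I_{B/A})$) together with an isomorphism $\Psi \colon \calP^{\sigma} \xrightarrow{\sim} W(B) \otimes_{(W(B), I_{\bullet})} \calP$ of $W(B)$-modules, and by \Cref{lem:modification} both $\calP^{\sigma}$ and $W(B) \otimes_{(W(B), I_{\bullet})} \calP$ --- hence also the set in which $\Psi$ ranges and the equation cutting out morphisms of windows --- depend only on the underlying $W(B)$-module $T \oplus L$; here it is crucial that replacing $I_B$ by $I_{B/A}$ changes neither the ring $W(B)$, nor its quotient $S = W(B)$ (resp.\ $W_n(B)$ in the truncated case), nor $\sigma = F$, and that the divided Frobenius on $I_{B/A}$ restricts to the one on $I_B$, so that the modification $u \mapsto u^{\sigma}$ of \Cref{lem:modification} takes the same value whether $u$ is regarded as a morphism over $B$ or over $B/A$. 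Since the top horizontal functor is base change along $(W(B), I_B) \to (W(B), I_{B/A})$, which leaves the underlying module and $\Psi$ unchanged, the display square is obtained from the module square of the previous step by adjoining, on the source and on the $2$-fibre product alike, the \emph{same} datum $\Psi$, compatibly with all functors and morphisms; hence it is $2$-Cartesian as well. I expect the only real (and modest) obstacle to lie in this last verification, together with pinning down the intrinsic description of $(\calS, \calI)$-modules in the first step --- once those are in place the square is formal, the crystalline/Grothendieck--Messing content having been entirely outsourced to the construction of $\calF^{\Witt}_{B/A}$, which we merely invoke.
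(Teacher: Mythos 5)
Your overall route is the same as the paper's, which simply declares the lemma immediate from the definitions (citing Zink's Proposition~45): the non-truncated half of your argument is a correct expansion of that, and both the normalization of the comparison isomorphism in the $2$-fibre product and the reinstatement of $\Psi$ via \Cref{lem:modification} are handled correctly there. However, your first step contains a false claim that breaks the proof of the truncated half. The functor from finite projective $(\calS, \calI)$-modules to pairs $(P, \overline{L})$ is full and essentially surjective, but it is faithful only when $\calI \to \calS$ is injective: two lifts of a filtration-preserving map $P \to P'$ to a datum $b \colon L \to \calI \otimes_{\calS} T'$ differ by an arbitrary element of $\Hom_{\calS}(L, K \otimes_{\calS} T')$ with $K = \ker(\calI \to \calS)$ (by flatness of $T'$). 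For $(W(B), I_B)$, $(W(B), I_{B/A})$, $(B,0)$ and $(B, \fraka)$ the map $\calI \to \calS$ is the inclusion of an ideal, so there is no issue; but for the truncated Witt frames the structure map $I_{n+1, B} \to \calW_n(B) = W_{n+1}(B)/(0, \dotsc, 0, B[p])$ has kernel $V^n(B[p])$, which is nonzero for every nonzero $p$-nilpotent ring $B$. Hence $\Disp_n(B)$ and $\Disp_n(B/A)$ are not equivalent to categories of pairs, and proving that your module square of pairs is $2$-Cartesian does not yield the truncated statement.

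The gap is repairable without changing your strategy, but it requires an argument you have not given. Either observe that the defect of faithfulness is the \emph{same} module $\Hom_{\calS}(L, V^n(B[p]) \otimes_{\calS} T')$ for both frames in the top row (since $V^n(B[p]) \subseteq I_{n+1, B} \subseteq I_{n+1, B/A}$) and carry this extra morphism datum along exactly as you carry $\Psi$; or, more directly, bypass the intrinsic description for the top row and check on the morphism data $(a, b, c, d)$ itself that a morphism in $\Disp_n(B/A)$ whose image in $\FilVect(B/A)$ lifts to $\FilVect(B)$ has $b$ factoring (uniquely) through $I_{n+1, B} \otimes_{\calS} T'$ --- this uses that $\ker \roundbr[\big]{I_{n+1, B/A} \to \fraka} = I_{n+1, B}$ --- after which the compatibility of the divided Frobenii finishes the argument exactly as in your last paragraph.
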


\begin{proof}
    This follows immediately from the definitions, see for example \cite[Proposition 45]{ZinkDisplay} in the non-truncated case.
\end{proof}

\begin{theorem}[Zink, Lau-Zink] \label{thm:deformation-theory}
    Let $B \to A$ be a pd-thickening of $p$-nilpotent rings with kernel $\fraka$ and let $r$ be a non-negative integer such that $p^r \fraka = 0$.
    Then for a fixed non-negative integer $e$ there exists a constant $\const = \const(r, e)$, depending only on $r$ and $e$, and a natural functor
    \[
        \Disp_{n + \const}^{(e)}(A) \to \Disp_n^{(e)}(B/A),
    \]
    unique up to unique isomorphism, that makes the diagram
    \[
    \begin{tikzcd}
        \Disp_{n + \const}^{(e)}(B/A) \ar[r] \ar[d]
        & \Disp_{n + \const}^{(e)}(A) \ar[d] \ar[ld]
        \\
        \Disp_n^{(e)}(B/A) \ar[r]
        & \Disp_n^{(e)}(A)
    \end{tikzcd}
    \]
    $2$-commutative.
    Consequently the natural functor $\Disp^{\nilp}(B/A) \to \Disp^{\nilp}(A)$ is an equivalence.
\end{theorem}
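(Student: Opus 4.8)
The plan is to establish the two assertions in turn: the existence of the truncating deformation functor $\Disp_{n+\const}^{(e)}(A)\to\Disp_n^{(e)}(B/A)$ together with the uniform bound on $\const$ is the substantive input, and the final equivalence $\Disp^{\nilp}(B/A)\to\Disp^{\nilp}(A)$ — which is Zink's crystal property for nilpotent displays \cite{ZinkDisplay} — is then recovered formally by passing to the limit over the truncation level.

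For the first assertion I would appeal to the deformation theory of truncated displays of Lau--Zink \cite{LauZinkTruncated}, the translation into the frame-theoretic language of this section being routine. Its heart is that, for the relative Witt frame $\calF^{\Witt, n}_{B/A}$, the divided Frobenius $F^{\divd}$ on the ideal $I_{n+1, B/A}$ is controlled by the pd-structure on $\fraka$. Concretely, one first lifts the underlying finite projective modules of the given display over $A$ along the relevant surjections of truncated Witt rings — this is unobstructed and unique, the kernels being nilpotent — and then lifts the window structure map; the hypothesis $p^{r}\fraka=0$ annihilates the ambiguity coming from the pd-ideal once the truncation level is lowered by a bounded amount, and a further bounded loss, governed by $e$, is what allows one to invert the operators entering through the nilpotence filtration. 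Tracking these two losses yields the uniform constant $\const=\const(r,e)$, and the $2$-commutativity and uniqueness up to unique isomorphism are built into the construction. I expect this bookkeeping to be the genuine obstacle; the remainder of the argument is formal.

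Granting the first assertion, fix a nilpotence bound $e$ and write $\calP[n]$ for the $n$-truncation of a (relative) display $\calP$. Since $W(B)=\varprojlim_n W_n(B)=\varprojlim_n\calW_n(B)$ and $I_{B/A}=\varprojlim_n I_{n+1, B/A}$, with all transition maps square-zero extensions, finite projective modules assemble along this tower, and one obtains equivalences $\Disp^{(e)}(A)\simeq\varprojlim_n\Disp_n^{(e)}(A)$ and $\Disp^{(e)}(B/A)\simeq\varprojlim_n\Disp_n^{(e)}(B/A)$; in particular $\Hom$-groups of (relative) displays are the inverse limits of the $\Hom$-groups of their truncations. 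Let $\Theta_n\colon\Disp_{n+\const}^{(e)}(A)\to\Disp_n^{(e)}(B/A)$ be the functor of the first assertion and let $(-)_A$ denote base change to $A$; the two triangles of its defining diagram say that $\Theta_n\circ(-)_A$ and $(-)_A\circ\Theta_n$ are naturally isomorphic to the pertinent truncation functors, and the functors $\Theta_n$ for varying $n$ are moreover compatible with lowering $n$ (this is part of the Lau--Zink construction, or may be extracted from the uniqueness clause applied in a relative setting). Evaluating the first triangle on $\calP[n+\const]$ gives, for every nilpotent relative display $\calP$ for $B/A$, a canonical isomorphism $\Theta_n(\calP_A[n+\const])\simeq\calP[n]$.

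It remains to see that $(-)_A\colon\Disp^{\nilp}(B/A)\to\Disp^{\nilp}(A)$ is fully faithful and essentially surjective. For full faithfulness, given a morphism $g\colon\calP_A\to\calP'_A$ over $A$, applying $\Theta_n$ to $g[n+\const]$ and using the isomorphisms $\Theta_n(\calP_A[n+\const])\simeq\calP[n]$ produces a compatible system of morphisms $\calP[n]\to\calP'[n]$ over $B/A$, hence a morphism $f\colon\calP\to\calP'$ with $f_A=g$ by the second triangle; and if $f_A=0$ then applying $(-)_A\circ\Theta_n\simeq(\text{truncation})$ shows $f[n]=0$ for all $n$, so $f=0$. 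For essential surjectivity, let $\calP$ be a nilpotent display over $A$, of nilpotence order $\leq e$ say; the truncated relative displays $\Theta_n(\calP[n+\const])$ form a compatible system and glue to a relative display $\widetilde{\calP}$ for $B/A$ of nilpotence order $\leq e$, and the second triangle gives $\widetilde{\calP}_A=\varprojlim_n(\Theta_n(\calP[n+\const]))_A=\varprojlim_n\calP[n]=\calP$.
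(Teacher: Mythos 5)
Your proposal is correct and takes essentially the same route as the paper: the substance of the first assertion is deferred to Lau--Zink \cite[Proposition 2.3]{LauZinkTruncated} (the paper extracts from it the essential surjectivity and surjectivity on morphisms of $\Disp_{n+\const}^{(e)}(B/A)\to\Disp_{n+\const}^{(e)}(A)$ together with the factorization of $\Hom$-truncation through base change to $A$, from which the functor is built formally), and the equivalence $\Disp^{\nilp}(B/A)\to\Disp^{\nilp}(A)$ is then obtained, exactly as you do, by passing to the limit over the truncation level.
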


\begin{proof}
    The first part is a reformulation of \cite[Proposition 2.3]{LauZinkTruncated}.
    More precisely it is shown there that the functor $\Disp_{n + \const}^{(e)}(B/A) \to \Disp_{n + \const}^{(e)}(A)$ is essentially surjective and surjective on morphisms and that for relative $(n + \const)$-truncated displays $\calP_{n + \const}$ and $\calP'_{n + \const}$ for $B/A$ there exists a (unique) factorization
    \[
    \begin{tikzcd}
        \Hom_{B/A} \roundbr[\big]{\calP_{n + \const}, \calP'_{n + \const}} \ar[r] \ar[d]
        & \Hom_{B/A} \roundbr[\big]{\calP_{n + \const}[n], \calP'_{n + \const}[n]}
        \\
        \Hom_A \roundbr[\big]{\calP_{n + \const, A}, \calP'_{n + \const, A}} \ar[ru]
    \end{tikzcd} \, .
    \]
    From this our claim follows formally.

    The second part was originally shown in \cite[Theorem 44]{ZinkDisplay} but also follows from the first part by passing to the limit for $n \to \infty$.
\end{proof}

\begin{corollary}\label{cor: Corollary Def statement via truncated displays}
    Let $B \to A$ be a pd-thickening of $p$-nilpotent rings and let $r$ be a non-negative integer such that $p^r \ker(B \to A) = 0$.
    Fix a non-negative integer $e$ and let $\const = \const(r, e)$ be the constant from \Cref{thm:deformation-theory}.
    Now suppose we are given the following data.
    \begin{itemize}
        \item
        A display $\overline{\calP}$ over $A$ and a lift $\calP_{n + \const}$ of $\overline{\calP}[n + \const]$ to an $(n + \const)$-truncated display over $B$.

        \item
        A display $\calP'$ over $B$.
        
        \item
        Morphisms $\overline{f} \colon \overline{\calP} \to \calP'_A$ and $f_{n + \const} \colon \calP_{n + \const} \to \calP'[n+\const]$ such that $\overline{f}[n + \const] = f_{n + \const, A}$.
    \end{itemize}
    Assume that $\overline{\calP}$, $\calP_{n + \const}$ and $\calP'$ are all nilpotent of nilpotency order less or equal than $e$.

    Then there exists a display $\calP$ over $B$ that compatibly lifts $\overline{\calP}$ and $\calP_{n + \const}[n]$ and a morphism $f \colon \calP \to \calP'$ such that $f_A = \overline{f}$ and $f[n] = f_{n + \const}[n]$.
\end{corollary}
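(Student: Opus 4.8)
The plan is to move the whole problem to the relative situation over $B/A$, where the deformation equivalence of \Cref{thm:deformation-theory} makes the lifts of $\overline{\calP}$ and $\overline{f}$ canonical, and then to descend back from $B/A$ to $B$ using the $2$-Cartesian squares of \Cref{prop:deformation-theory}, feeding in $\calP_{n + \const}$ and $f_{n + \const}$ as precisely the filtered-module data that this descent requires.

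Concretely, I would proceed as follows. Base changing $\calP'$ along the evident frame morphism $\calF^{\Witt}_B \to \calF^{\Witt}_{B/A}$ produces a relative display $\calP'_{B/A}$ for $B/A$ with $(\calP'_{B/A})_A = \calP'_A$, still nilpotent of order $\leq e$. By the equivalence $\Disp^{\nilp}(B/A) \to \Disp^{\nilp}(A)$ from \Cref{thm:deformation-theory}, which preserves the nilpotence order, there is an essentially unique relative display $\widetilde{\calP}$ for $B/A$ with $\widetilde{\calP}_A \simeq \overline{\calP}$, together with an essentially unique morphism $\widetilde{f} \colon \widetilde{\calP} \to \calP'_{B/A}$ lifting $\overline{f}$. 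On the other hand, \Cref{prop:deformation-theory} in its truncated form lets us extract from $\calP_{n + \const}$ over $B$ a relative $(n + \const)$-truncated display $\mathcal{Q}$ for $B/A$ (a lift of $\overline{\calP}[n + \const]$), a filtered projective $B$-module $M$ together with a morphism to the filtered module underlying $\calP'$ coming from $f_{n + \const}$, and an identification in $\FilVect(B/A)$ of the image of $\mathcal{Q}$ with the image of $M$ that is compatible with these morphisms; here the image of $f_{n + \const}$ also gives a morphism $\mathcal{Q} \to \calP'_{B/A}[n + \const]$.

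To construct $\calP$ and $f$ I would then apply \Cref{prop:deformation-theory} over $B$, for which the only missing input is a gluing isomorphism in $\FilVect(B/A)$ between the image of $\widetilde{\calP}$ and the image of $M$, compatible with $\widetilde{f}$ and the morphism out of $M$. Since $\Disp(B/A) \to \FilVect(B/A)$ factors through the truncation $\Disp(B/A) \to \Disp_n(B/A)$, it suffices to identify the images of $\widetilde{\calP}[n]$ and of $\mathcal{Q}[n]$ in $\FilVect(B/A)$ in a way compatible with $\widetilde{f}[n]$ and the $n$-truncation of $f_{n + \const}$, and then compose with the identification coming from $\calP_{n + \const}$. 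This is where the constant $\const = \const(r, e)$ enters: lifts of $\overline{\calP}[n + \const]$ to relative $(n + \const)$-truncated displays for $B/A$ form a connected groupoid — essential surjectivity and surjectivity on $\Hom$'s in \Cref{thm:deformation-theory} give a morphism between any two such lifts reducing to the identity over $A$, and since $\ker(B \to A)$ is nilpotent Nakayama upgrades it to an isomorphism — so I may fix $\Psi \colon \widetilde{\calP}[n + \const] \simeq \mathcal{Q}$ with $\Psi_A = \id$. Then $\widetilde{f}[n + \const] \circ \Psi^{-1}$ and the image of $f_{n + \const}$ in $\Hom_{B/A}(\mathcal{Q}, \calP'_{B/A}[n + \const])$ both restrict over $A$ to $\overline{f}[n + \const]$, using the hypothesis $\overline{f}[n + \const] = f_{n + \const, A}$; hence by the factorization of the $\Hom$-map in the proof of \Cref{thm:deformation-theory} their $n$-truncations agree. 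Passing to $\FilVect(B/A)$, this is exactly the compatibility needed, so \Cref{prop:deformation-theory} over $B$ now yields $\calP$ and $f \colon \calP \to \calP'$, and unwinding the Cartesian squares over $A$ and at level $n$ gives $\calP_A \simeq \overline{\calP}$, $f_A = \overline{f}$, $\calP[n] \simeq \calP_{n + \const}[n]$ and $f[n] = f_{n + \const}[n]$ with the asserted compatibilities.

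The step I expect to be the real obstacle is producing this gluing isomorphism, i.e. reconciling the ``abstract'' lift $\widetilde{\calP}$ coming from the unconditional deformation equivalence over $B/A$ with the ``concrete'' truncated lift $\calP_{n + \const}$ over $B$, and in particular matching the two morphisms to $\calP'$. Everything else is bookkeeping with the $2$-Cartesian squares of \Cref{prop:deformation-theory} and with the frame-theoretic base change functors; but this reconciliation is exactly what forces the loss of $\const(r, e)$ levels of truncation and is the reason the statement is phrased in terms of $(n + \const)$-truncated rather than honest displays.
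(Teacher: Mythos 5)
Your proposal is correct and is essentially the paper's argument in unwound form: the paper packages the same two inputs — the truncated crystal functor of \Cref{thm:deformation-theory} (which supplies the diagonal arrow and accounts for the loss of $\const$ levels) and the $2$-Cartesian squares of \Cref{prop:deformation-theory} — into a single fiber-product diagram of categories, whereas you inline the construction of that functor via essential surjectivity, surjectivity on $\Hom$'s, and the factorization of the $\Hom$-map, and then perform the gluing by hand. The reconciliation step you flag as the obstacle is exactly the content of the paper's diagonal functor, so no new idea is needed beyond what you wrote.
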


\begin{proof}
    By \Cref{thm:deformation-theory} we have a $2$-commutative diagram
    \[
    \begin{tikzcd}
        \Disp^{(e)}(B) \ar[r] \ar[d]
        & \Disp^{(e)}(B/A) \times_{\Disp_n^{(e)}(B/A)} \Disp_n^{(e)}(B) \ar[d]
        \\
        \Disp^{(e)}(A) \times_{\Disp_{n + \const}^{(e)}(A)} \Disp_{n + \const}^{(e)}(B) \ar[ru] \ar[r]
        & \Disp^{(e)}(A) \times_{\Disp_n^{(e)}(A)} \Disp_n^{(e)}(B)
    \end{tikzcd} \, .
    \]
    Now we have objects
    \[
        \roundbr[\big]{\overline{\calP}, \calP_{n + \const}} \in \Disp^{(e)}(A) \times_{\Disp_{n + \const}^{(e)}(A)} \Disp_{n + \const}^{(e)}(B)
        \quad \text{and} \quad
        \calP' \in \Disp^{(e)}(B)
    \]
    and a morphism
    \[
        \roundbr[\big]{\overline{f}, f_{n + \const}} \colon \roundbr[\big]{\overline{\calP}, \calP_{n + \const}} \to \roundbr[\big]{\calP'_A, \calP'[n + \const]}
        \quad \text{in} \quad
        \Disp^{(e)}(A) \times_{\Disp_{n + \const}^{(e)}(A)} \Disp_{n + \const}^{(e)}(B) \, .
    \]
    It follows from \Cref{prop:deformation-theory} that the upper horizontal functor is an equivalence of categories.
    Thus there exists a tuple $(\calP, f)$ consisting of a nilpotent display $\calP$ of nilpotentcy order less or equal than $e$ and a morphism $f \colon \calP \to \calP'$, unique up to unique isomorphism, such that its image in the upper right corner of the diagram agrees with the image of the tuple $((\overline{\calP}, \calP_{n + \const}), (\overline{f}, f_{n + \const}))$.
    The $2$-commutativity of the lower right triangle of the diagram then implies the compatibilites $f_A = \overline{f}$ and $f[n] = f_{n + \const}[n]$ as desired.
\end{proof}

        \subsection{Morphisms of displays}
        \label{sec:morphisms}

In this section we prove some (new) foundational results about morphisms of displays.

\begin{lemma} \label{lem:homogeneous-equation}
    Let $R$ be an $\Fp$-algebra and let
    \[
        B \in \Mat_b \roundbr[\big]{W(R)}
        \quad \text{and} \quad
        C \in \Mat_c \roundbr[\big]{W(R)}
    \]
    such that $C_0^{(p^{e - 1})} \dotsm C_0 = 0$ for some non-negative integer $e$ (where $C_0 \in \Mat_c(R)$ denotes the zeroth Witt component of $C$).

    Then the system of equations
    \[
        \curlybr[\Bigg]{
            \begin{aligned}
                V(X) &= F^{e - 1}(B) \cdot X \cdot F^{e - 1}(C), \\
                F(X) &= 0
            \end{aligned}
        }, \qquad
        X \in \Mat_{b \times c} \roundbr[\big]{W(R)}
    \]
    has only the trivial solution $X = 0$.
\end{lemma}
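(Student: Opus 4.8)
The plan is to iterate the first equation exactly $e$ times, using nothing more than the elementary Witt vector identities $V(F(a)\cdot b) = a\cdot V(b)$ and $V(a\cdot F(b)) = V(a)\cdot b$ (which, since $V$ is additive, carry over entrywise to matrices), so as to rewrite $V^e(X)$ as a sandwich
\[
V^e(X) = P\cdot X\cdot Q, \qquad P = B\cdot F(B)\dotsm F^{e-1}(B),\qquad Q = F^{e-1}(C)\dotsm F(C)\cdot C .
\]
Concretely, I would prove by induction on $1\le k\le e$ that $V^k(X) = F^{e-k}(B)\cdot V^{k-1}(X)\cdot F^{e-k}(C)$: the base case $k=1$ is the hypothesis, and the inductive step is $V^{k+1}(X) = V\roundbr{F^{e-k}(B)\cdot V^{k-1}(X)\cdot F^{e-k}(C)} = F^{e-k-1}(B)\cdot V\roundbr{V^{k-1}(X)\cdot F^{e-k}(C)} = F^{e-k-1}(B)\cdot V^k(X)\cdot F^{e-k-1}(C)$, which stays within the allowed range of Frobenius powers since $e-k-1\ge 0$ for $k\le e-1$. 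Unwinding this recursion from $k=e$ down to $k=1$ yields the sandwich formula. (If $e=0$ the hypothesis forces $c=0$ and the statement is vacuous.)

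Next I would identify the zeroth Witt component of $Q$. Since $R$ is an $\Fp$-algebra we have $(F(x))_0 = x_0^p$ for $x\in W(R)$, hence $(F^j(C))_0 = C_0^{(p^j)}$, and zeroth components are multiplicative, so
\[
Q_0 = C_0^{(p^{e-1})}\cdot C_0^{(p^{e-2})}\dotsm C_0^{(p)}\cdot C_0 = C_0^{(p^{e-1})}\dotsm C_0 = 0
\]
by hypothesis. Therefore every entry of $Q$ lies in $I_R = V(W(R))$, and since $V$ is injective we may write $Q = V(Q')$ with $Q' = F^{\divd}(Q)$ (applied entrywise).

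The conclusion is then immediate: applying the identity $a\cdot V(b) = V(F(a)\cdot b)$ entrywise gives $X\cdot Q = X\cdot V(Q') = V\roundbr{F(X)\cdot Q'}$, which vanishes because $F(X) = 0$ by the second equation. Hence $V^e(X) = P\cdot X\cdot Q = 0$, and injectivity of $V$ (so of $V^e$) forces $X = 0$.

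I do not expect a serious obstacle here: the whole argument is a short computation once one sees that the two hypotheses should be combined by iterating the semilinear equation and pushing the nilpotence of $C_0$ to the far right, where $F(X)=0$ annihilates it. The only points requiring a little care are bookkeeping ones — checking that the Frobenius exponents $e-k$ remain non-negative along the induction, and that the scalar identities $V(F(a)b)=aV(b)$ and $V(aF(b))=V(a)b$ are applied correctly at the matrix level (which is routine, since $V$ is additive and the identities are used entrywise).
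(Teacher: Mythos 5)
Your proof is correct and takes essentially the same route as the paper: iterate the semilinear equation $e$ times to obtain $V^e(X) = B \cdot F(B) \dotsm F^{e-1}(B)\cdot X\cdot F^{e-1}(C)\dotsm F(C)\cdot C$, use the vanishing of $C_0^{(p^{e-1})}\dotsm C_0$ to write the right-hand factor as $V(C')$, and then kill $X\cdot V(C') = V(F(X)\cdot C')$ using $F(X)=0$ and the injectivity of $V$. Your explicit induction and the zeroth-component computation merely spell out steps the paper leaves implicit.
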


\begin{proof}
    Let $X$ be a solution.
    Then we can recursively substitute the first equation into itself to obtain
    \[
        V^e(X) = B \dotsm F^{e - 1}(B) \cdot X \cdot F^{e - 1}(C) \dotsm C \, .
    \]
    Now our assumption on $C$ implies that we have $F^{e - 1}(C) \dotsm C = V(\adjust{C})$ for some $\adjust{C} \in \Mat_c(W(R))$.
    Hence we get
    \[
        X \cdot F^{e - 1}(C) \dotsm C
        = X \cdot V \roundbr[\big]{\adjust{C}}
        = V \roundbr[\big]{F(X) \cdot \adjust{C}} = 0,
    \]
    so that $V^e(X) = 0$ and consequently $X = 0$ as desired.
\end{proof}

\begin{proposition} \label{prop:p-torsionfree}
    Let $\calP$, $\calP'$ be displays over $R$ and assume that $\calP$ is nilpotent.
    Then $\Hom_R(\calP, \calP')$ is $p$-torsionfree.
\end{proposition}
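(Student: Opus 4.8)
The plan is to reduce to the case where $R$ is an $\Fp$-algebra and then to extract from the relation $pf=0$ a homogeneous matrix identity that is killed by \Cref{lem:homogeneous-equation}. For the reduction, observe that since $R$ is $p$-nilpotent the surjection $R\to R/pR$ is a finite composition of square-zero extensions, hence of pd-thickenings. Along a pd-thickening $B\to A$, the reduction map $\Hom_B(\calP,\calP')\to\Hom_A(\calP_A,\calP'_A)$ is injective once $\calP$ is nilpotent: it factors as the inclusion $\Hom_B(\calP,\calP')\hookrightarrow\Hom_{B/A}(\calP,\calP')$, where a morphism of relative displays satisfies only a weaker condition on its off-diagonal block, followed by the comparison $\Hom_{B/A}(\calP,\calP')\to\Hom_A(\calP_A,\calP'_A)$ coming from the deformation theory of displays (\Cref{thm:deformation-theory}, \Cref{cor: Corollary Def statement via truncated displays}). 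Composing along the chain yields $\Hom_R(\calP,\calP')\hookrightarrow\Hom_{R/pR}(\calP_{R/pR},\calP'_{R/pR})$, and since this map is compatible with multiplication by $p$, it is enough to prove the statement when $R$ is an $\Fp$-algebra.

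So assume $R$ is an $\Fp$-algebra and let $f\in\Hom_R(\calP,\calP')$ satisfy $pf=0$. After choosing free normal decompositions, represent $\calP$ and $\calP'$ by invertible matrices $U$ and $U'$ over $W(R)$ and $f$ by a matrix $M=\begin{bmatrix}A&B\\C&D\end{bmatrix}$ with $MU=U'\Phi(M)$. Over an $\Fp$-algebra one has $p=VF$ on $W(R)$, and $V$ is injective; hence $pM=0$ forces $F(A)=F(C)=F(D)=0$ and $F(B')=0$, where $B'\coloneqq F^{\divd}(B)$ and $B=V(B')$, so that
\[
    \Phi(M)=\begin{bmatrix}0&B'\\0&0\end{bmatrix}.
\]
Rewriting $M=U'\Phi(M)U^{-1}$ and expanding, every block of $M$ becomes a $W(R)$-bilinear expression in $B'$ and in the blocks of $U'$ and of $\Psi_\calP^{-1}=U^{-1}$; in particular $M$ vanishes as soon as $B'$ does. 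Reading off the off-diagonal block and using $B=V(B')$ yields a homogeneous relation for $B'$ that involves the block $\breve z$ of $\Psi_\calP^{-1}$ governing the nilpotence of $\calP$. I would then iterate this relation $e$ times, where $e$ is the nilpotence order of $\calP$, feeding in the window equations of $\calP$ and $\calP'$ to move the Frobenius into the right positions, so as to arrive at a system of the form
\[
    V(X)=F^{e-1}(\mathcal B)\cdot X\cdot F^{e-1}(\mathcal C),\qquad F(X)=0,
\]
with $X$ built from $B'$ and with $\mathcal C$ satisfying $\mathcal C_0^{(p^{e-1})}\dotsm\mathcal C_0=0$ because $\calP$ is nilpotent of order $\le e$. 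By \Cref{lem:homogeneous-equation} this forces $X=0$, hence $B'=0$, hence $M=0$ and $f=0$.

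The hard part will be the iteration in the last step: one must set it up so that the Frobenius twists and the nilpotence matrix line up precisely with the hypotheses of \Cref{lem:homogeneous-equation}. The subtlety is that $f^{\sigma}$ — equivalently, the matrix $\Phi(M)$ — is in general only a homomorphism of the underlying modules and \emph{not} a morphism of windows, since its off-diagonal entry $F^{\divd}(B)$ need not lie in the filtration ideal; so the recursion cannot be carried out by naively reapplying the modification functor of \Cref{lem:modification}, but has to be routed through the window structures of $\calP$ and $\calP'$ themselves. A more routine — but still necessary — point is to confirm that morphisms out of a nilpotent display into an arbitrary display are genuinely rigid along thickenings, which is what makes the reduction to $\Fp$-algebras legitimate.
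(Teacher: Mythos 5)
Your characteristic-$p$ computation is essentially the paper's: from $pf=0$ one gets $F(M)=0$ (since $V$ is injective and $VF=p$), the structure equation collapses to $V(B)=W'\cdot B\cdot\breve Z$ together with $F(B)=0$, and \Cref{lem:homogeneous-equation} --- which already contains the $e$-fold iteration you describe as ``the hard part'' --- kills $B$ and hence all of $M$. Two points in that step are glossed over, and the paper handles both with one uniform device. First, the Frobenius twists $F^{e-1}(\adjust{W'})$ and $F^{e-1}(\adjust{\breve Z})$ are not produced by iterating through the window equations but by passing to a faithfully flat cover on which $W'$ and $\breve Z$ admit $(e-1)$-fold Frobenius preimages. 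Second, the hypothesis of \Cref{lem:homogeneous-equation} is a vanishing condition on the \emph{root} $\adjust{\breve Z}_0$, whereas nilpotence of $\calP$ only makes the relevant product nilpotent, so one must additionally kill a finitely generated nilpotent ideal. The paper legitimizes all of these moves at once by observing that $S\mapsto\Hom_S(\calP_S,\calP'_S)[p]$ is representable by an affine $R$-scheme $\Spec(A)$, so that the assertion becomes the surjectivity of $R\to A$, which may be checked after a faithfully flat extension and, by Nakayama, modulo any nilpotent ideal of $R$.

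The genuine gap is your reduction to $\Fp$-algebras. You invoke injectivity of $\Hom_B(\calP,\calP')\to\Hom_A(\calP_A,\calP'_A)$ along the square-zero steps of $R\to R/pR$, deduced from \Cref{thm:deformation-theory}; but that theorem (and Zink's Theorem 44 behind it) is an equivalence of categories of \emph{nilpotent} displays, while in the proposition $\calP'$ is an arbitrary display. The rigidity you need --- with only the source nilpotent --- does hold, but it is not routine from the cited results: proving it amounts to showing that the operator $M\mapsto U'\,\Phi(M)\,U^{-1}$ is nilpotent on matrices with entries in $W(\fraka)$ for $\fraka$ the square-zero kernel, which is essentially the computation of \Cref{lem:homogeneous-equation} transplanted to the pd-ideal; you would be re-proving the hard step rather than reducing to it. The paper's representability-plus-Nakayama argument sidesteps this entirely, since replacing $R$ by $R/pR$ is just killing a nilpotent ideal and requires no lifting statement for morphisms. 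I recommend adopting that reduction; with it in place, the rest of your argument goes through and coincides with the paper's.
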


\begin{proof}
    The functor on $R$-algebras
    \[
        S \mapsto \Hom_S \roundbr[\big]{\calP_S, \calP'_S}[p]
    \]
    is representable by a (pointed) affine $R$-scheme $\Spec(A)$.
    Our goal is to show that $A = R$ or equivalently that $R \to A$ is surjective.

    By Nakayama's Lemma \cite[Tag 00DV]{stacks} we are free to replace $R$ by $R/I$ for a nilpotent ideal $I \subseteq R$.
    Using this we make the following reduction steps.
    \begin{itemize}
        \item
        We may assume that $R$ is of characteristic $p$.

        \item
        After passing to a Zariski cover we may assume that $\calP$ and $\calP'$ have free normal decompositions and thus can be represented by block matrices
        \[
            \begin{bmatrix}
                W & X \\ Y & Z
            \end{bmatrix}
            \quad \text{and} \quad
            \begin{bmatrix}
                W' & X' \\ Y' & Z'
            \end{bmatrix} \, .
        \]
        After replacing $R$ by a faithfully ring extension we may further assume that we can write $\breve{Z} = F^{e - 1}(\adjust{\breve{Z}})$ and $W' = F^{e - 1}(\adjust{W'})$; here $e$ denotes the nilpotency order of $\calP$ and we have set
        \[
            \begin{bmatrix}
                \breve{W} & \breve{X} \\ \breve{Y} & \breve{Z}
            \end{bmatrix}
            \coloneqq
            \begin{bmatrix}
                W & X \\ Y & Z
            \end{bmatrix}^{-1} \, .
        \]
        
        \item
        We now have
        \[
            \roundbr[\Big]{\breve{Z}^{\anglebr{1}, (p^{e - 1})}_0 \dotsm \adjust{\breve{Z}_0}}^{(p^{e - 1})} = \breve{Z}_0^{(p^{e - 1})} \dotsm \breve{Z}_0 = 0
        \]
        so that the matrix entries of $\breve{Z}^{\anglebr{1}, (p^{e - 1})}_0 \dotsm \adjust{\breve{Z}_0}$ generate a nilpotent ideal.
        Thus we may assume that this matrix is already zero itself.
    \end{itemize}

    Now let $S$ be an $R$-algebra and let $f \colon \calP_S \to \calP'_S$ be a morphism such that $p \cdot f = 0$.
    Then $f$ is represented by a block matrix
    \[
        \begin{bmatrix}
            A & V(B) \\ C & D
        \end{bmatrix}
    \]
    that satisfies
    \[
        \begin{bmatrix}
            A & V(B) \\ C & D
        \end{bmatrix}
        =
        \begin{bmatrix}
            W' & X' \\ Y' & Z'
        \end{bmatrix}
        \cdot
        \begin{bmatrix}
            F(A) & B \\ p \cdot F(C) & F(D)
        \end{bmatrix}
        \cdot
        \begin{bmatrix}
            \breve{W} & \breve{X} \\ \breve{Y} & \breve{Z}
        \end{bmatrix}
        \quad \text{and} \quad
        F \roundbr*{
        \begin{bmatrix}
            A & V(B) \\ C & D
        \end{bmatrix}
        }
        = 0 \, .
    \]
    In particular we then have
    \[
        V(B) = F^{e - 1} \roundbr[\big]{\adjust{W'}} \cdot B \cdot F^{e - 1} \roundbr[\big]{\adjust{\breve{Z}}}
        \quad \text{and} \quad
        F(B) = 0
    \]
    so that we may apply \Cref{lem:homogeneous-equation} to obtain $B = 0$.
    From this it immediately follows that also $A, C, D = 0$ so that $f = 0$ as desired.
\end{proof}

\begin{lemma} \label{lem:homogeneous-equation-truncated}
    Let $R$ be an $\Fp$-algebra and let
    \[
        B \in \Mat_b \roundbr[\big]{W_n(R)}
        \quad \text{and} \quad
        C \in \Mat_c \roundbr[\big]{W_n(R)}
    \]
    such that $C_0^{(p^{e - 1})} \dotsm C_0 = 0$ for some non-negative integer $e < n$.

    Then every solution of the system of equations
    \[
        \curlybr[\Bigg]{
            \begin{aligned}
                V(X) &= F^{e - 1}(B) \cdot X \cdot F^{e - 1}(C), \\
                F(X) &= 0
            \end{aligned}
        }, \qquad
        X \in \Mat_{b \times c} \roundbr[\big]{W_n(R)}
    \]
    satisfies $X[n - e] = 0 \in \Mat_{b \times c}(W_{n - e}(R))$.
\end{lemma}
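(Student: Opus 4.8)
The plan is to rerun the proof of \Cref{lem:homogeneous-equation} essentially verbatim, now working over $W_n(R)$ instead of $W(R)$, and to isolate the single place where truncation forces a weaker conclusion. So let $X$ be a solution. Exactly as before, I would substitute the first equation into itself $e$ times; each substitution uses only the matrix form of the projection formula for the Verschiebung,
\[
    Y \cdot V(Z) = V\roundbr[\big]{F(Y) \cdot Z}, \qquad V(Z) \cdot Y = V\roundbr[\big]{Z \cdot F(Y)},
\]
which is a formal consequence of the scalar identities and hence holds over $W_n(R)$ without change. This produces the identity
\[
    V^e(X) = B \cdot F(B) \dotsm F^{e - 1}(B) \cdot X \cdot F^{e - 1}(C) \dotsm F(C) \cdot C
\]
in $\Mat_{b \times c}\roundbr[\big]{W_n(R)}$.

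Next I would use the hypothesis on $C$. The condition $C_0^{(p^{e - 1})} \dotsm C_0 = 0$ says precisely that the zeroth Witt component of $F^{e - 1}(C) \dotsm C$ vanishes, since that component is multiplicative and $F$ acts on it by the $p$-th power map. Hence $F^{e - 1}(C) \dotsm C$ has all entries in the augmentation ideal $I_{n, R} = V\roundbr[\big]{W_n(R)}$, so we may write $F^{e - 1}(C) \dotsm C = V(\adjust{C})$ for some $\adjust{C} \in \Mat_c\roundbr[\big]{W_n(R)}$ (unlike in the non-truncated case there is no canonical choice, because $V$ is no longer injective, but any choice will do). Using $F(X) = 0$ and the projection formula once more,
\[
    X \cdot F^{e - 1}(C) \dotsm C = X \cdot V(\adjust{C}) = V\roundbr[\big]{F(X) \cdot \adjust{C}} = 0,
\]
and therefore $V^e(X) = 0$ in $\Mat_{b \times c}\roundbr[\big]{W_n(R)}$.

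The only genuinely new point, and the one thing I would be careful to pin down, is the passage from $V^e(X) = 0$ to the statement about $X$. Over $W(R)$ one invokes injectivity of $V$ to get $X = 0$; over $W_n(R)$ the operator $V^e$ has a kernel, but that kernel is exactly understood: a truncated Witt vector is annihilated by $V^e$ if and only if its components of index $< n - e$ all vanish, i.e. $\ker\roundbr[\big]{V^e \colon W_n(R) \to W_n(R)} = \ker\roundbr[\big]{W_n(R) \to W_{n - e}(R)}$, where the hypothesis $e < n$ is what makes the truncation on the right meaningful (and what makes the conclusion non-vacuous). Applying this entrywise to $V^e(X) = 0$ yields $X[n - e] = 0 \in \Mat_{b \times c}\roundbr[\big]{W_{n - e}(R)}$, as claimed. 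I do not expect any real obstacle: the whole content is that the argument of \Cref{lem:homogeneous-equation} is ``$V$-linear enough'' to survive truncation and that the failure of injectivity of $V$ is measured precisely by the truncation map $W_n(R) \to W_{n - e}(R)$; the one routine-but-essential verification is the identification of $\ker(V^e)$ just displayed.
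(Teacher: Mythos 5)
Your proposal is correct and follows the paper's own route: the paper proves this lemma by running the argument of \Cref{lem:homogeneous-equation} verbatim over $W_n(R)$, exactly as you do. The only point you add explicitly — that $\ker\roundbr[\big]{V^e \colon W_n(R) \to W_n(R)}$ coincides with $\ker\roundbr[\big]{W_n(R) \to W_{n-e}(R)}$, which converts $V^e(X)=0$ into $X[n-e]=0$ — is precisely the (correct) truncated replacement for the injectivity of $V$ used in the untruncated case.
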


\begin{proof}
    This follows from the same argument as in \Cref{lem:homogeneous-equation}.
\end{proof}

\begin{proposition} \label{prop:p-torsionfree-truncated}
    Let $R$ be a $\ZZ/p^a$-algebra.
    Let $\calP_n$, $\calP'_n$ be nilpotent $n$-truncated displays over $R$ of nilpotence order less or equal than $e$ and let $f \colon \calP_n \to \calP'_n$ be a morphism such that $p \cdot f = 0$.
    Then there exists a constant $\const = \const(a, e, c)$, depending only on $a$, $e$ and the codimension $c$ of $\calP_n$, such that
    \[
        f[n - \const] = 0
    \]
    whenever $n > \const$.
\end{proposition}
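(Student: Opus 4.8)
The plan is to follow the proof of \Cref{prop:p-torsionfree} as closely as possible, now keeping track of the truncation level and replacing its two appeals to Nakayama's lemma --- first the reduction to characteristic $p$, then the killing of a nilpotent ideal --- by applications of the deformation theory of truncated displays (\Cref{thm:deformation-theory}). This is affordable precisely because each nilpotent ideal that occurs will turn out to have nilpotence degree bounded purely in terms of $e$ and $c$, which is where the dependence of $\const$ on the codimension enters. First I would reduce to the case that $R$ is an $\Fp$-algebra: the ideal $pR \subseteq R$ is a pd-ideal with $p^{a-1} \cdot pR = 0$, so $R \to R/pR$ is a pd-thickening whose constant in \Cref{thm:deformation-theory} is $\const(a-1,e)$, and forgetting $\calP_n$, $\calP'_n$ and $f$ along the faithful, truncation- and base-change-compatible functor from $\Disp_n(R)$ to relative $n$-truncated displays for $R \to R/pR$, the $\Hom$-factorization of \Cref{thm:deformation-theory} shows: whenever the base change $\bar f$ of $f$ to $R/pR$ satisfies $\bar f[m] = 0$, already $f[m - \const(a-1,e)] = 0$ over $R$. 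So it suffices to produce a constant $\const'(e,c)$ that works for $\Fp$-algebras and then set $\const(a,e,c) \coloneqq \const'(e,c) + \const(a-1,e)$.

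So assume $R$ is an $\Fp$-algebra. Since vanishing of a morphism of truncated displays after truncation can be checked Zariski-locally and after faithfully flat base change, I may assume (after such harmless modifications of $R$) that $\calP_n$ and $\calP'_n$ have free normal decompositions, represented by invertible matrices $U$ and $U'$ with blocks $W, X, Y, Z$ respectively $W', X', Y', Z'$. Running the computation of \Cref{prop:p-torsionfree} in the truncated setting, $p \cdot f = 0$ forces $f$ to be represented by a matrix $\begin{bmatrix} A & V(B) \\ C & D \end{bmatrix}$ whose image under $F$ vanishes, and --- after a further faithfully flat extension allowing one to write $W' = F^{e-1}(\adjust{W'})$ and $\breve{Z} = F^{e-1}(\adjust{\breve{Z}})$, where $\begin{bmatrix} \breve{W} & \breve{X} \\ \breve{Y} & \breve{Z} \end{bmatrix} = U^{-1}$ --- one obtains $V(B) = F^{e-1}(\adjust{W'}) \cdot B \cdot F^{e-1}(\adjust{\breve{Z}})$ together with $F(B) = 0$, with $A, C, D$ expressed through $B$ exactly as there. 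Nilpotence of $\calP_n$ of order $\le e$ says precisely that the $c \times c$ matrix $N \coloneqq \adjust{\breve{Z}_0}^{(p^{e-1})} \dotsm \adjust{\breve{Z}_0}$ satisfies $N^{(p^{e-1})} = 0$; hence the ideal $I \subseteq R$ generated by its (at most $c^2$) entries is nilpotent of degree $\le N_0(e,c)$, a bound depending only on $e$ and $c$.

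Over $R/I$ we have $N = 0$, so \Cref{lem:homogeneous-equation-truncated} applies (here one uses $e < n$) and yields $B[n - e] = 0$; from the expressions for $A, C, D$ and the fact that $V$ shifts truncation by one, this gives $f[n - \const''(e)] = 0$ over $R/I$ for a constant $\const''(e)$ depending only on $e$ (which also has to absorb the bounded discrepancies between $W_n(R)$, $\calW_n(R)$ and $I_{n+1,R}$). To descend this from $R/I$ back to $R$, I would factor $R \to R/I$ into a bounded number --- at most $N_0(e,c)$ --- of square-zero pd-thickenings of $\Fp$-algebras, say $R/I^{j} \to R/I^{j-1}$, and apply \Cref{thm:deformation-theory} at each step exactly as in the first paragraph, each step costing $\const(1,e)$ in the truncation level. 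This furnishes $\const'(e,c) = \const''(e) + N_0(e,c) \cdot \const(1,e)$, and together with the first paragraph completes the proof.

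The main obstacle is conceptual: unlike $p$-torsionfreeness, the assertion ``$f$ vanishes after truncating to level $n - \const$'' is \emph{not} stable under passing to quotients by nilpotent ideals, so both Nakayama reductions of \Cref{prop:p-torsionfree} must be upgraded to deformation-theoretic arguments, and this upgrade only works because the nilpotent ideal $I$ produced along the way has nilpotence degree bounded by a constant depending only on $e$ and $c$ --- this is the entire reason $\const$ is allowed to depend on the codimension. A more routine technical point is to verify, now over the truncated Witt frame, that $p \cdot f = 0$ really does pin $f$ down to the block shape used above, and to carry the various bounded shifts between $W_n$, $\calW_n$ and $I_{n+1}$ through the translation of $B[n-e] = 0$ into a vanishing statement for $f$ itself.
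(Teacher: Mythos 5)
Your proof is correct and follows essentially the same route as the paper's (which is much terser): replace $R$ by a faithfully flat extension via Witt descent, use \Cref{thm:deformation-theory} to pass to a quotient by a nilpotent ideal whose order is bounded in terms of $a$, $e$ and $c$ at a bounded cost in the truncation level, and then run the argument of \Cref{prop:p-torsionfree} with \Cref{lem:homogeneous-equation-truncated} in place of \Cref{lem:homogeneous-equation}. Your explicit factorization into square-zero pd-thickenings and the bound $N_0(e,c)$ on the nilpotence degree of the matrix-entry ideal are exactly the details the paper leaves implicit.
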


\begin{proof}
    By Witt-descent, see \cite[Proposition A.5]{LauZinkTruncated}, we may replace $R$ by a faithfully flat extension.
    By \Cref{thm:deformation-theory} (see also \cite[Corollary 2.4]{LauZinkTruncated}) we may also replace $R$ by $R/I$ for an ideal $I \subseteq R$ that is nilpotent of order bounded in terms of $a$, $e$ and $c$.

    Now the claim follows from the same argument as in \Cref{prop:p-torsionfree}, using \Cref{lem:homogeneous-equation-truncated} instead of \Cref{lem:homogeneous-equation}.
\end{proof}

\begin{lemma} \label{lem:inhomogeneous-equation}
    Let $R$ be an $\Fp$-algebra and let
    \[
        B \in \Mat_b \roundbr[\big]{W(R)}, \quad
        C \in \Mat_c \roundbr[\big]{W(R)}
        \quad \text{and} \quad
        A, Y \in \Mat_{b \times c} \roundbr[\big]{W(R)}
    \]
    such that
    \[ \label{eq:inhomogeneous-equation-3}
        V(Y) = F^{e + 2}(B) \cdot Y \cdot F^{e + 2}(C) + F(A) \tag{$\#$}
    \]
    and $C_0^{(p^{e - 1})} \dotsm C_0 = 0$ for some non-negative integer $e$.
    Write $F^{e - 1}(C) \dotsm C = V(\adjust{C})$ as before.

    Then the system of equations
    \[ \label{eq:inhomogeneous-equation-1}
        \curlybr[\Bigg]{
            \begin{aligned}
                V(X) &= F^{e + 1}(B) \cdot X \cdot F^{e + 1}(C) + A, \\
                F(X) &= Y
            \end{aligned}
        }, \qquad
        X \in \Mat_{b \times c} \roundbr[\big]{W(R)} \tag{$\star$}
    \]
    is equivalent to the single equation
    \[ \label{eq:inhomogeneous-equation-2}
    \begin{aligned}
        V^e(X) &= F^2(B) \dotsm F^{e + 1}(B) \cdot V(Y) \cdot F \roundbr[\big]{\adjust{C}} \\
        & \qquad+ \sum_{i = 0}^{e - 1} F^2(B) \dotsm F^{e - i}(B) \cdot V^i(A) \cdot F^{e - i}(C) \dotsm F^2(C) \, .
    \end{aligned}
    \tag{$\star \star$}
    \]
    In particular the system $(\text{\ref{eq:inhomogeneous-equation-1}})$ has a (unique) solution if and only if the $j$-th Witt component of the right hand side of $(\text{\ref{eq:inhomogeneous-equation-2}})$ vanishes for $j = 0, \dotsc, e - 1$.
\end{lemma}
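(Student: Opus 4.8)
The plan is to prove the two implications between $(\star)$ and $(\star\star)$ separately, using two standard facts about Witt vectors of an $\Fp$-algebra $R$: the Verschiebung $V$ is injective on $\Mat_{b \times c}(W(R))$, with the image of $V^e$ being exactly the matrices whose first $e$ Witt components vanish; and $p = V(1)$ in $W(R)$, so that $pX = V(1) \cdot X = V(F(X))$ for every matrix $X$.

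For the implication $(\star) \Rightarrow (\star\star)$ I would argue as in \Cref{lem:homogeneous-equation}, but keeping track of the inhomogeneous terms. Applying $V$ repeatedly to the first equation of $(\star)$ and using $u \cdot V(z) \cdot v = V(F(u) \cdot z \cdot F(v))$ one obtains $V^{k+1}(X) = F^{e+1-k}(B) \cdot V^k(X) \cdot F^{e+1-k}(C) + V^k(A)$ for $0 \le k \le e$; substituting these into one another gives
\[
    V^e(X) = F^2(B) \dotsm F^{e+1}(B) \cdot X \cdot F^{e+1}(C) \dotsm F^2(C) + \sum_{i=0}^{e-1} F^2(B) \dotsm F^{e-i}(B) \cdot V^i(A) \cdot F^{e-i}(C) \dotsm F^2(C) .
\]
Now the nilpotence hypothesis enters through $F^{e+1}(C) \dotsm F^2(C) = F^2(F^{e-1}(C) \dotsm C) = F^2(V(\adjust{C})) = p \cdot F(\adjust{C})$, so that $X \cdot F^{e+1}(C) \dotsm F^2(C) = (pX) \cdot F(\adjust{C}) = V(F(X)) \cdot F(\adjust{C}) = V(Y) \cdot F(\adjust{C})$ by the second equation of $(\star)$; this rewrites the display as $(\star\star)$. (This direction does not use $(\#)$, but note that a solution of $(\star)$ automatically forces $(\#)$, by applying $F$ to the first equation and using $pX = V(Y)$.)

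For the converse I would first record that $(\star)$ has at most one solution — the difference of two solutions solves the associated homogeneous system, to which \Cref{lem:homogeneous-equation} applies after enlarging $e$ to $e+2$, which is harmless since $C_0^{(p^{e-1})} \dotsm C_0 = 0$ forces the longer product to vanish — while $(\star\star)$ has at most one solution by injectivity of $V^e$. Since ``$(\star)$ solvable'' implies ``$(\star\star)$ solvable'' by the first implication, it remains to show the reverse implication of solvability. Given $X$ with $V^e(X)$ equal to the right-hand side of $(\star\star)$, set $\widehat A := V(X) - F^{e+1}(B) \cdot X \cdot F^{e+1}(C)$ and $\widehat Y := F(X)$; then $(X, \widehat A, \widehat Y)$ tautologically solves $(\star)$ and $(\widehat A, \widehat Y)$ satisfies $(\#)$ automatically (again using $p = V(1)$), so the first implication shows that $V^e(X)$ also equals the right-hand side of $(\star\star)$ formed with $(\widehat A, \widehat Y)$ in place of $(A, Y)$. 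Subtracting, the right-hand side of $(\star\star)$ formed with the pair $(\alpha, \beta) := (\widehat A - A, \widehat Y - Y)$ vanishes, and $(\alpha, \beta)$ satisfies $(\#)$. Thus the whole converse reduces to the claim that a pair $(\alpha, \beta)$ satisfying $(\#)$ and making the right-hand side of $(\star\star)$ vanish must be $(0,0)$; granting it, $\widehat A = A$ and $F(X) = \widehat Y = Y$, so $X$ solves $(\star)$. The ``in particular'' then follows at once, since $(\star\star)$ is solvable exactly when the right-hand side lies in the image of $V^e$, i.e.\ when its first $e$ Witt components vanish.

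I expect the main obstacle to be this last claim — reversing the unrolling. The naive attempt to peel off one $V$ at a time fails, because applying $F$ to the vanishing identity introduces powers of $p$, and $W(R)$ may have $p$-torsion. The idea I would pursue is to use $(\#)$ to trade these powers of $p$ for Verschiebungs, via $pw = V(F(w))$, $V(p\,w) = p\,V(w)$, and the rewriting $p\beta = V(\beta) - F(\alpha)$ coming from $(\#)$: feeding the iterated form of $(\#)$ at level $e$ — whose $\beta$-part carries the coefficient $F^{e+2}(C) \dotsm F^3(C) = F^3(V(\adjust{C})) = p \cdot F^2(\adjust{C})$ — into the vanishing of the right-hand side of $(\star\star)$, the $\beta$-terms should cancel in pairs and, once the $p$'s are absorbed, leave an identity that expresses a Frobenius shift of $V^{e-1}(\alpha)$ through $V(\beta)$ alone; injectivity of $V$ then expresses $\alpha$ through $V(\beta)$, and substituting this back into $(\#)$ collapses it to $V(\beta) = 0$, hence $\beta = 0$, whence $F(\alpha) = 0$ from $(\#)$ and finally $\alpha = 0$ by the argument of \Cref{lem:homogeneous-equation}. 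The case $e = 1$ can be checked by hand — there $(\#)$ becomes literally $V(\beta) = 0$ — and the general case I would organize as an induction on $e$; keeping the Frobenius-shift bookkeeping uniform is the delicate point.
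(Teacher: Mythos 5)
Your forward implication $(\star) \Rightarrow (\star\star)$ is correct and is essentially the paper's argument, and your treatment of uniqueness and of the ``in particular'' clause is fine. The problem is the converse. You reduce it, via the auxiliary pair $(\widehat A, \widehat Y)$, to the claim that a pair $(\alpha, \beta)$ satisfying the homogeneous version of $(\#)$ and annihilating the right-hand side of $(\star\star)$ must vanish --- a clean reduction, but then you do not prove that claim: you verify $e = 1$ by hand and otherwise describe an intended induction whose ``Frobenius-shift bookkeeping'' you explicitly leave open. That bookkeeping is not a routine detail; it is the entire computational content of this direction, and as written the proposal does not contain a proof. Moreover the endgame of your sketch is shaky: once you have $\beta = 0$, the homogeneous $(\#)$ gives only $F(\alpha) = 0$, while the vanishing of the right-hand side of $(\star\star)$ gives $V^{e-1}(\alpha) = -\sum_{i=0}^{e-2} F^2(B) \dotsm F^{e-i}(B) \cdot V^i(\alpha) \cdot F^{e-i}(C) \dotsm F^2(C)$, which is not an equation of the form $V(\alpha) = F^{e-1}(B) \cdot \alpha \cdot F^{e-1}(C)$, so \Cref{lem:homogeneous-equation} does not apply verbatim; a separate argument (in the spirit of that lemma, but adapted to this mixed-order relation) is still needed.

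For comparison, the paper's converse avoids this reduction entirely and argues directly with the given $X$: first, applying $F$ to $(\star\star)$ and recursively substituting $(\#)$ yields $F(V^e(X)) = V^e(Y)$, hence $F(X) = Y$ by injectivity of $V$; second, one checks $V^{e+1}(X) = F(B) \cdot V^e(X) \cdot F(C) + V^e(A)$ by substituting $(\star\star)$ into both sides and matching terms using the identities $V^2(Y) = F^{e+1}(B) \cdot V(Y) \cdot F^{e+1}(C) + p \cdot A$ and $p \cdot \adjust{C} = F^e(C) \dotsm F(C)$, after which injectivity of $V^e$ recovers the first equation of $(\star)$. If you want to salvage your route, you would need to carry out the cancellation argument for $(\alpha, \beta)$ in full for general $e$; the direct verification is likely shorter.
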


\begin{proof}
    First let $X$ be a solution to $(\text{\ref{eq:inhomogeneous-equation-1}})$.
    Recursively substituting the first equation into itself yields
    \begin{align*}
        V^e(X) &= F^2(B) \dotsm F^{e + 1}(B) \cdot X \cdot F^{e + 1}(C) \dotsm F^2(C) \\
        & \qquad + \sum_{i = 0}^{e - 1} F^2(B) \dotsm F^{e - i}(B) \cdot V^i(A) \cdot F^{e - i}(C) \dotsm F^2(C) \, .
    \end{align*}
    Now, using the second equation, we get
    \[
        X \cdot F^{e + 1}(C) \dotsm F^2(C) = X \cdot V \roundbr[\big]{F^2 \roundbr[\big]{\adjust{C}}} = V(Y) \cdot F \roundbr[\big]{\adjust{C}}
    \]
    so that we obtain $(\text{\ref{eq:inhomogeneous-equation-2}})$.

    Now let $X$ be a solution to $(\text{\ref{eq:inhomogeneous-equation-2}})$.
    From recursively substituting $(\text{\ref{eq:inhomogeneous-equation-3}})$ into itself we get
    \[
        F \roundbr[\big]{V^e(X)} = V^e(Y)
    \]
    so that $F(X) = Y$.
    Thus it is left to show
    \[
        V^{e + 1}(X) \overset{!}{=} F(B) \cdot V^e(X) \cdot F(C) + V^e(A) \, .
    \]
    Substituting $(\text{\ref{eq:inhomogeneous-equation-2}})$ into both sides yields
    \begin{align*}
        V^{e + 1}(X) &= F(B) \dotsm F^e(B) \cdot V^2(Y) \cdot \adjust{C} \\
        & \qquad + \sum_{i = 0}^{e - 1} F(B) \dotsm F^{e - 1 - i}(B) \cdot V^{i + 1}(A) \cdot F^{e - 1 - i}(C) \dotsm F(C) \\
        &= F(B) \dotsm F^{e + 1}(B) \cdot V(Y) \cdot F^{e + 1}(C) \cdot \adjust{C} \\
        & \qquad + \sum_{i = 0}^e F(B) \dotsm F^{e - i}(B) \cdot V^i(A) \cdot F^{e - i}(C) \dotsm F(C),
    \end{align*}
    where we used that
    \[
        V^2(Y) = F^{e + 1}(B) \cdot V(Y) \cdot F^{e + 1}(C) + p \cdot A
        \quad \text{and} \quad
        p \cdot \adjust{C} = F^e(C) \dotsm F(C),
    \]
    and
    \begin{align*}
        F(B) \cdot V^e(X) \cdot F(C) + V^e(A) &= F(B) \dotsm F^{e + 1}(B) \cdot V(Y) \cdot F \roundbr[\big]{\adjust{C}} \cdot F(C) \\
        & \qquad + \sum_{i = 0}^e F(B) \dotsm F^{e - i}(B) \cdot V^i(A) \cdot F^{e - i}(C) \dotsm F(C) \, .
    \end{align*}
    By noting that
    \[
        V \roundbr[\Big]{F^{e + 1}(C) \cdot \adjust{C}} = F^e(C) \dotsm C = V \roundbr[\Big]{F \roundbr[\big]{\adjust{C}} \cdot F(C)}
    \]
    we can conclude that both terms are equal.
\end{proof}

\begin{lemma} \label{lem:locus-divisible-by-p-deform}
    Let $g \colon \calP \to \calP'$ be a morphism of nilpotent displays over $R$.
    Let $I \subseteq R$ be a finitely generated nilpotent ideal and assume that the functor on $R/I$-algebras
    \[
        S \mapsto
        \begin{cases}
            \curlybr{\ast} & \text{if $g_S \colon \calP_S \to \calP'_S$ is divisible by $p$,}
            \\
            \varnothing & \text{else}
        \end{cases}
    \]
    is representable by $\Spec((R/I)/\overline{K})$ for a finitely generated ideal $\overline{K} \subseteq R/I$.

    Then also the analogous functor on $R$-algebras is representable by $\Spec(R/J)$ for a finitely generated ideal $J \subseteq R$.
\end{lemma}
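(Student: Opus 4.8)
The plan is to combine a dévissage to a square-zero situation with a linearisation of the problem via Grothendieck--Messing theory; the mechanism that makes the linearisation work is \Cref{prop:p-torsionfree}. Denote by $F$ the functor on $R$-algebras under consideration. First note that ``representable by $\Spec(R/J)$ with $J$ finitely generated'' is Zariski-local on $\Spec R$, so we may always pass to a Zariski cover. Next set $I^{(0)} = I$ and $I^{(j+1)} = (I^{(j)})^2 + p\, I^{(j)}$; one has $I^{(k)} \subseteq (I + pR)^{k+1}$, which vanishes as soon as $k$ is large since $I$ and $p$ are nilpotent. Thus the tower $R = R/I^{(k)} \twoheadrightarrow \dotsb \twoheadrightarrow R/I^{(1)} \twoheadrightarrow R/I$ is finite, and each of its steps is a thickening whose kernel is finitely generated, square-zero and annihilated by $p$. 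It therefore suffices to prove the following: for a thickening $B \to A = B/\fraka$ with $\fraka$ finitely generated, $\fraka^2 = 0$ and $p\fraka = 0$, if the analogous functor $\overline{F}$ on $A$-algebras is represented by $\Spec(A/\overline{K})$ with $\overline{K}\subseteq A$ finitely generated, then $F$ is represented by $\Spec(B/J)$ with $J$ finitely generated.

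Let $\widetilde{K}\subseteq B$ be the (finitely generated) preimage of $\overline{K}$. If $g_S$ is divisible by $p$ over a $B$-algebra $S$, then its reduction over $S/\fraka S$ is divisible by $p$, so $S/\fraka S$ is an $A/\overline{K}$-algebra, i.e.\ $\widetilde{K}\,S \subseteq \fraka S$; since $\fraka S$ is square-zero and annihilated by $p$, this forces $\widetilde{K}^2 S = 0$ and $p\,\widetilde{K}S = 0$. Hence $F$ factors through $\Spec B'$ for $B' \coloneqq B/(\widetilde{K}^2 + p\widetilde{K})$, and $\widetilde{K}^2 + p\widetilde{K}$ is finitely generated, so we may replace $B$ by $B'$. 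After this replacement $\widetilde{K}$ itself is finitely generated, square-zero and annihilated by $p$, the ring $A_0 \coloneqq B/\widetilde{K}$ equals $A/\overline{K}$, and $\overline{g}$ is divisible by $p$ over $A_0$, say $\overline{g} = p\,\overline{h}_0$ with $\overline{h}_0 \in \Hom_{A_0}(\calP_{A_0}, \calP'_{A_0})$ (unique by \Cref{prop:p-torsionfree}).

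The key point is now that for every $B$-algebra $S$, writing $S_0 \coloneqq S/\widetilde{K}S$, the reduction map $\Hom_S(\calP_S, \calP'_S) \to \Hom_{S_0}(\calP_{S_0}, \calP'_{S_0})$ is \emph{injective}: an element of its kernel is, after a Zariski localisation, represented by a matrix with entries in $\ker(W(S) \to W(S_0))$, which, since $\widetilde{K}S$ is square-zero, is an abelian group with componentwise structure and is therefore annihilated by $p$ because $p\,\widetilde{K}S = 0$; hence the morphism is annihilated by $p$ and so vanishes by \Cref{prop:p-torsionfree}. Moreover, by \Cref{prop:deformation-theory} and \Cref{thm:deformation-theory} the image of this reduction map is exactly the set of morphisms $\calP_{S_0} \to \calP'_{S_0}$ respecting the Hodge filtrations over $S$. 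Combining the injectivity with the $p$-torsionfreeness of $\Hom_{S_0}(\calP_{S_0},\calP'_{S_0})$, one checks directly that $g_S$ is divisible by $p$ over $S$ if and only if $(\overline{h}_0)_{S_0}$ respects the Hodge filtrations over $S$. By the standard Grothendieck--Messing obstruction theory extracted from \Cref{prop:deformation-theory}, this last condition is the vanishing of a single class
\[
    \mathrm{Ob}_S \in \Hom_{S_0}\roundbr[\big]{\mathrm{Fil}^1_{\calP_{S_0}},\, \mathrm{Lie}_{\calP'_{S_0}}} \otimes_{S_0} \widetilde{K}S \;=\; \mathcal N_0 \otimes_{A_0} \widetilde{K}S ,
\]
where $\mathrm{Fil}^1$ and $\mathrm{Lie}$ denote the Hodge filtration and the Lie algebra of a display, and $\mathcal N_0 \coloneqq \Hom_{A_0}(\mathrm{Fil}^1_{\calP_{A_0}}, \mathrm{Lie}_{\calP'_{A_0}})$ is a finite projective $A_0$-module. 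Since the pd-thickening $S \to S_0$ (with trivial divided powers) is the base change of $B \to A_0$ along $B \to S$, functoriality of this obstruction gives $\mathrm{Ob}_S = $ the image of a universal class $\mathrm{Ob}_B \in \mathcal N_0 \otimes_{A_0} \widetilde{K}$ under the map induced by $\widetilde{K} \to \widetilde{K}S$. After a further Zariski localisation we may assume $\mathcal N_0$ is free of rank $n$, so that $\mathrm{Ob}_B$ is a tuple $(x_1, \dotsc, x_n) \in \widetilde{K}^{\,n}$ with $x_i \in B$; then $\mathrm{Ob}_S = 0$ if and only if $x_1, \dotsc, x_n$ map to $0$ in $S$, so $F$ is represented by $\Spec\bigl(B/(x_1, \dotsc, x_n)\bigr)$, which completes the proof.

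The main obstacle is the injectivity statement in the previous paragraph, and in particular the fact that it needs the dévissage to a kernel which is \emph{both} square-zero and $p$-torsion: these two properties together are exactly what forces $\ker(W(S) \to W(S_0))$ to be a $p$-torsion module, and this is what allows \Cref{prop:p-torsionfree} to take effect. Once this injectivity is available, the locus ``$g_S$ divisible by $p$'' is identified with a Grothendieck--Messing obstruction locus over the finitely generated ideal $\widetilde{K}$, and everything else is bookkeeping.
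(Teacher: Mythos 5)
Your proposal is correct and follows essentially the same route as the paper: dévissage to a square-zero thickening, replacement of the base by its quotient by $\widetilde{K}^2$, and translation of divisibility by $p$ into compatibility of the Grothendieck--Messing lift of $p^{-1}g$ with the Hodge filtration via \Cref{prop:deformation-theory} and \Cref{thm:deformation-theory}, which is visibly cut out by finitely many equations. The only real difference is that you prove the injectivity of $\Hom_S(\calP_S,\calP'_S)\to\Hom_{S_0}(\calP_{S_0},\calP'_{S_0})$ by hand from \Cref{prop:p-torsionfree} (which is why you need the kernel to be killed by $p$, hence the extra $p\,I^{(j)}$ term in your dévissage); this injectivity is already automatic from the $2$-Cartesian square in \Cref{prop:deformation-theory}, since a morphism of displays over $S$ is determined by the induced morphism of relative displays, so that refinement of the dévissage can be dropped.
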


\begin{proof}
    We may assume without loss of generality that $I^2 = 0$.
    Write $K \subseteq R$ for the preimage of $\overline{K} \subseteq R/I$.
    If $g_S$ is divisible by $p$ then we have
    \[
        \overline{K} \cdot (S/IS) = 0,
        \quad \text{i.e.} \quad
        KS \subseteq IS,
    \]
    and consequently $K^2 S = 0$.
    As $K^2 \subseteq R$ is finitely generated we can replace $R$ by $R/K^2$ and thus may assume that $K^2 = 0$.
    
    Equip $K \subseteq R$ with the trivial pd-structure.
    Then by \Cref{thm:deformation-theory} the morphism of displays $(p^{-1} g)_{R/K}$ lifts uniquely to a morphism of relative displays
    \[
        \roundbr[\big]{p^{-1} \cdot g}_{R/(R/K)} \colon \calP_{R/(R/K)} \to \calP'_{R/(R/K)} \, .
    \]
    The morphism $g_S \colon \calP_S \to \calP'_S$ now is divisible by $p$ if and only if $(p^{-1} g)_{S/(S/KS)}$ is compatible with the Hodge filtrations of $\calP_S$ and $\calP'_S$ by \Cref{prop:deformation-theory}.
    This condition is clearly cut out by a finitely generated ideal $J \subseteq R$ as desired.
\end{proof}

\begin{proposition} \label{prop:locus-divisible-by-p}
    Let $g \colon \calP \to \calP'$ be a morphism of nilpotent displays over $R$.
    Then the functor on $R$-algebras
    \[
        S \mapsto
        \begin{cases}
            \curlybr{\ast} & \text{if $g_S \colon \calP_S \to \calP'_S$ is divisible by $p$,}
            \\
            \varnothing & \text{else}
        \end{cases}
    \]
    is representable by $\Spec(R/J)$ for a finitely generated ideal $J \subseteq R$.
\end{proposition}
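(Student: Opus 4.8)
The plan is to reduce to the case where $R$ is an $\Fp$-algebra and, there, to identify the divisibility locus with the solvability locus of an inhomogeneous system of the shape treated in \Cref{lem:inhomogeneous-equation}. Note first that the functor in question takes values in sets of cardinality at most $1$: if $g_S$ is divisible by $p$ the quotient is unique by \Cref{prop:p-torsionfree}. For the reduction, recall that $\calP$, $\calP'$ live over the $p$-nilpotent ring $R$, say $p^n R = 0$, and induct on $n$. If $n \geq 2$ set $I = p^{n-1}R$; this ideal is generated by a single element and is nilpotent (as $I^2 = p^{2(n-1)}R = 0$), and $R/I$ satisfies $p^{n-1}(R/I) = 0$. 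By the inductive hypothesis the divisibility locus for $g_{R/I}$ is representable by $\Spec((R/I)/\overline{K})$ with $\overline{K}$ finitely generated, so \Cref{lem:locus-divisible-by-p-deform} gives the assertion over $R$. It remains to treat $n = 1$, i.e.\ $R$ an $\Fp$-algebra.

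Assume now that $R$ is an $\Fp$-algebra. Exactly as in the proof of \Cref{prop:p-torsionfree}, after passing to a Zariski cover and then a faithfully flat extension of $R$ we may assume that $\calP$ and $\calP'$ carry free normal decompositions, are represented by invertible block matrices $U$, $U'$ over $W(R)$, that the blocks of $U^{-1}$ and $U'$ relevant to the computation are written as suitable powers of the Frobenius applied to adjusted matrices $B$, $C$, and that the nilpotence relation $C_0^{(p^{e-1})}\dotsm C_0 = 0$ holds identically, where $e$ is the nilpotence order of $\calP$. Fix an $R$-algebra $S$. Writing the (by \Cref{prop:p-torsionfree} unique) candidate quotient $h = p^{-1}g_S$ as a block matrix with unknown entries and combining the identity $p \cdot h = g_S$ with the window equation that $h$ must satisfy, one extracts for the unknown ``$V$-block'' $X$ of $h$ a system precisely of the shape $(\star)$ of \Cref{lem:inhomogeneous-equation}, with auxiliary data $A$, $B$, $C$, $Y$ built from the blocks of $g_S$, $U$ and $U'$; the compatibility relation $(\#)$ for $Y$ follows from the same computation together with the window equation of $g_S$, and the hypothesis $C_0^{(p^{e-1})}\dotsm C_0 = 0$ is the normalization made above. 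Assembling this correspondence -- in particular producing $Y$ and verifying $(\#)$ -- is the main technical point of the argument, and it is exactly here that the nilpotence of $\calP$ is used.

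By \Cref{lem:inhomogeneous-equation} the system then has a (necessarily unique) solution over $W(S)$ if and only if the $j$-th Witt components, for $j = 0, \dots, e-1$, of an explicit matrix $N$ with entries in $W(R)$, base changed along $R \to S$, vanish. Since the $j$-th Witt component is a natural transformation of functors in $S$, the $j$-th component of $N_S$ is the image in $S$ of the fixed element $w_j(N) \in R$; hence the divisibility locus over $R$ is cut out by the finitely many entries of $w_0(N), \dots, w_{e-1}(N)$, i.e.\ equals $\Spec(R/J)$ for the finitely generated ideal $J$ they generate. Finally one transports this back across the auxiliary Zariski cover and faithfully flat extension: divisibility of a morphism of nilpotent displays by $p$ descends along faithfully flat extensions -- using Witt descent together with the $p$-torsionfreeness of \Cref{prop:p-torsionfree}, which makes the quotient unique and hence equipped with an automatic descent datum -- and a finitely presented closed immersion descends along faithfully flat morphisms, so the conclusion over the extension yields the conclusion over the original $R$. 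This completes the proof.
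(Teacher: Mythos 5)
Your proposal follows the paper's proof essentially step for step: reduce to characteristic $p$ via \Cref{lem:locus-divisible-by-p-deform}, normalize the matrices exactly as in \Cref{prop:p-torsionfree}, translate divisibility by $p$ into solvability of the system $(\star)$ of \Cref{lem:inhomogeneous-equation}, and read off the finitely generated ideal from the vanishing of the Witt components $j = 0, \dotsc, e-1$ of the right-hand side of $(\star\star)$. The only piece left implicit in your sketch is that, before the data $A$, $Y$ of that lemma can even be formed, one must first adjoin to $J$ the finitely many entries of the zeroth Witt components of the blocks of $g$ (the paper does this as an explicit preliminary quotient, so as to write those blocks as $V(\adjust{K})$, etc.); this is precisely part of the ``assembling the correspondence'' step you flag as the main technical point, and it goes through as you expect.
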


\begin{proof}
    We are free to replace $R$ by $R/I$ for a finitely generated nilpotent ideal by \Cref{lem:locus-divisible-by-p-deform}.
    Thus, arguing similarly as in \Cref{prop:p-torsionfree}, we may assume that $R$ is of characteristic $p$, that $\calP$, $\calP'$ and $g$ are represented by block matrices
    \[
        \begin{bmatrix}
            W & X \\ Y & Z
        \end{bmatrix}, \quad
        \begin{bmatrix}
            W' & X' \\ Y' & Z'
        \end{bmatrix}
        \quad \text{and} \quad
        \begin{bmatrix}
            K & V(L) \\ M & N
        \end{bmatrix}
    \]
    and that we can write $\breve{Z} = F^{e + 1}(\adjust{\breve{Z}})$ and $W' = F^{e + 1}(\adjust{W'})$ with $(\adjust{\breve{Z}}_0)^{(p^{e - 1})} \dotsm \adjust{\breve{Z}}_0 = 0$, where we have set
    \[
        \begin{bmatrix}
            \breve{W} & \breve{X} \\ \breve{Y} & \breve{Z}
        \end{bmatrix}
        \coloneqq
        \begin{bmatrix}
            W & X \\ Y & Z
        \end{bmatrix}^{-1}
    \]
    as before.
    If $g_S$ is divisible by $p$ then we certainly have $K_{S, 0}, L_{S, 0}, M_{S, 0}, N_{S, 0} = 0$.
    Hence, after replacing $R$ with its quotient by the finitely generated ideal generated by the matrix entries of $K_0$, $L_0$, $M_0$ and $N_0$, we may assume that we can write
    \[
        K = V \roundbr[\big]{\adjust{K}}, \quad
        L = V \roundbr[\big]{\adjust{L}}, \quad
        M = V \roundbr[\big]{\adjust{M}}
        \quad \text{and} \quad
        N = V \roundbr[\big]{\adjust{N}} \, .
    \]
    Now $g_S$ is divisible by $p$ if and only if the system of equations
    \[
        \curlybr*{
            \vphantom{\begin{gathered} \begin{pmatrix} A \\ A \end{pmatrix} \\ \begin{pmatrix} A \\ A \end{pmatrix} \\ A \end{gathered}}
            \begin{aligned}
                \begin{bmatrix}
                    A & V(B) \\ C & D
                \end{bmatrix}
                &=
                \begin{bmatrix}
                    W' & X' \\ Y' & Z'
                \end{bmatrix}
                \cdot
                \begin{bmatrix}
                    F(A) & B \\ p \cdot F(C) & F(D)
                \end{bmatrix}
                \cdot
                \begin{bmatrix}
                    \breve{W} & \breve{X} \\ \breve{Y} & \breve{Z}
                \end{bmatrix},
                \\[1ex]
                p \cdot
                \begin{bmatrix}
                    A & V(B) \\ C & D
                \end{bmatrix}
                &=
                \begin{bmatrix}
                    K & V(L) \\ M & N
                \end{bmatrix}
            \end{aligned}
        }, \qquad
        (A, B, C, D)
    \]
    has a (unique) solution over $W(S)$, in which case $p^{-1} \cdot g_S$ is represented by the matrix
    \[
        \begin{bmatrix}
            A & V(B) \\ C & D
        \end{bmatrix} \, .
    \]
    This system is equivalent to the system
    \[ \label{eq:locus-divisible-by-p-1}
        \curlybr*{
            \vphantom{\begin{gathered} \begin{pmatrix} A \\ A \end{pmatrix} \\ \begin{pmatrix} A \\ A \end{pmatrix} \end{gathered}}
            \begin{gathered}
                \begin{bmatrix}
                    A & V(B) \\ C & D
                \end{bmatrix}
                =
                \begin{bmatrix}
                    W' & X' \\ Y' & Z'
                \end{bmatrix}
                \cdot
                \begin{bmatrix}
                    \adjust{K} & B \\ p \cdot \adjust{M} & \adjust{N}
                \end{bmatrix}
                \cdot
                \begin{bmatrix}
                    \breve{W} & \breve{X} \\ \breve{Y} & \breve{Z}
                \end{bmatrix},
                \\[1ex]
                F(A) = \adjust{K}, \quad F(B) = \adjust{L}, \quad F(C) = \adjust{M}, \quad F(D) = \adjust{N}
            \end{gathered}
        }, \qquad
        (A, B, C, D) \tag{$\dagger$}
    \]
    and we claim that it is also equivalent to the system
    \[ \label{eq:locus-divisible-by-p-2}
        \curlybr*{
            \vphantom{\begin{gathered} \begin{pmatrix} A \\ A \end{pmatrix} \\ A \end{gathered}}
            \begin{aligned}
                V(B) &= F^{e + 1} \roundbr[\big]{\adjust{W'}} \cdot B \cdot F^{e + 1} \roundbr[\big]{\adjust{\breve{Z}}} + \roundbr[\Big]{W' \adjust{K} \breve{X} + p \cdot X' \adjust{M} \breve{X} + X' \adjust{N} \breve{Z}}, \\
                F(B) &= \adjust{L}
            \end{aligned}
        },
        \quad B \, . \tag{$\dagger \dagger$}
    \]
    Assuming this we can apply \Cref{lem:inhomogeneous-equation} to conclude, using that $g$ is a morphism of displays in order to verify the condition $(\text{\ref{eq:inhomogeneous-equation-3}})$ appearing in that lemma.

    Thus it remains to show that the two labeled systems above really are equivalent.
    Note that the first equation in $(\text{\ref{eq:locus-divisible-by-p-2}})$ is precisely the upper right matrix entry of the first equation $(\text{\ref{eq:locus-divisible-by-p-1}})$, so that for any solution $(A, B, C, D)$ of $(\text{\ref{eq:locus-divisible-by-p-1}})$ the matrix $B$ is a solution of $(\text{\ref{eq:locus-divisible-by-p-2}})$.
    Now suppose conversely that $B$ is a solution of $(\text{\ref{eq:locus-divisible-by-p-2}})$.
    Then there clearly exist uniquely determined matrices $A$, $C$ and $D$ such that $(A, B, C, D)$ solves the first equation in $(\text{\ref{eq:locus-divisible-by-p-1}})$.
    The remaining equations then also are satisfied, as can be seen by multiplying the first equation by $p$ and using that $g$ is a morphism of displays.
\end{proof}

\begin{proposition} \label{prop:divisible-by-p-truncated}
    Let $R$ be a $\ZZ/p^a$-algebra, let $\calP$ and $\calP'$ be nilpotent displays over $R$ of nilpotence order less or equal than $e$ and let $f \colon \calP \to \calP'$ be a morphism.
    Then there exists a positive integer $n = n(a, e, c)$, depending only on $a$, $e$ and the codimension $c$ of $\calP$, such that the following conditions are equivalent:
    \begin{enumerate}
        \item
        The morphism $f \colon \calP \to \calP'$ is divisible by $p$.

        \item
        The truncated morphism $f[n] \colon \calP[n] \to \calP'[n]$ is divisible by $p$.
    \end{enumerate}
\end{proposition}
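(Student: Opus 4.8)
The implication $(1) \Rightarrow (2)$ is immediate, since $f = p \cdot g$ forces $f[n] = p \cdot g[n]$. For the converse, suppose that $f[N]$ is divisible by $p$, where $N = N(a, e, c)$ is a constant to be specified, and let us deduce that $f$ is divisible by $p$. We first treat the case where $R$ is an $\Fp$-algebra and then bootstrap to the general case.

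\emph{The case of an $\Fp$-algebra.} Here we run the argument of \Cref{prop:locus-divisible-by-p}. After passing to a faithfully flat extension and quotienting out the finitely generated nilpotent ideals appearing there — which can be arranged compatibly with truncation at the cost of a controlled change of constant, just as in the proof of \Cref{prop:p-torsionfree-truncated} — we may assume that $\calP$, $\calP'$ and $f$ are given by block matrices in the normal form used in that proof. Divisibility of $f$ by $p$ is then equivalent to solvability of the system $(\text{\ref{eq:locus-divisible-by-p-2}})$, which by \Cref{lem:inhomogeneous-equation} amounts to the vanishing of the Witt components of index $0, \dotsc, e - 1$ of the matrix on the right hand side of $(\text{\ref{eq:inhomogeneous-equation-2}})$. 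The crucial point is that each of these finitely many Witt components is a polynomial expression in the Witt components of the given data of index $\leq M$, where $M$ is a bound depending only on $e$. Hence, by the truncated analogue of \Cref{lem:inhomogeneous-equation} (deduced from it as \Cref{lem:homogeneous-equation-truncated} is deduced from \Cref{lem:homogeneous-equation}), divisibility of $f[n]$ by $p$ is governed by exactly the same equations as soon as $n$ exceeds $M$ by a quantity bounded in terms of $e$, which accounts for the $V$-truncation loss in that lemma. This establishes the assertion over $\Fp$-algebras with a constant $n_{1} = n_{1}(e, c)$.

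\emph{The general case.} Let $R$ be a $\ZZ/p^{a}$-algebra and set $R_{0} \coloneqq R/pR$. Equipping $pR$ with its standard divided power structure makes $R \to R_{0}$ a pd-thickening whose kernel is annihilated by $p^{a - 1}$. Let $\const = \const(a - 1, e)$ be the constant of \Cref{thm:deformation-theory}, let $\const'' = \const''(1, e, c)$ be the constant of \Cref{prop:p-torsionfree-truncated} over $\ZZ/p$-algebras, and set $N \coloneqq n_{1} + \const + \const''$. Assume $f[N]$ is divisible by $p$, say $p \cdot \widetilde{g} = f[N]$ for an $N$-truncated morphism $\widetilde{g} \colon \calP[N] \to \calP'[N]$. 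Base changing to $R_{0}$ and truncating shows that $f[n_{1}]_{R_{0}}$ is divisible by $p$, so by the previous step there is a morphism of displays $g_{0} \colon \calP_{R_{0}} \to \calP'_{R_{0}}$ with $p g_{0} = f_{R_{0}}$. Since $p \cdot \bigl( \widetilde{g}_{R_{0}} - g_{0}[N] \bigr) = f[N]_{R_{0}} - (p g_{0})[N] = 0$, \Cref{prop:p-torsionfree-truncated} yields $\widetilde{g}_{R_{0}}[n_{1} + \const] = g_{0}[n_{1} + \const]$. We now invoke \Cref{cor: Corollary Def statement via truncated displays} for the pd-thickening $R \to R_{0}$, with $\overline{\calP} = \calP_{R_{0}}$, with $(n_{1} + \const)$-truncated lift $\calP_{R}[n_{1} + \const]$, with the display ``$\calP'$'' of that corollary taken to be $\calP'_{R}$, with ``$\overline{f}$'' taken to be $g_{0}$, and with ``$f_{n_{1} + \const}$'' taken to be $\widetilde{g}[n_{1} + \const]$; the required compatibility $\overline{f}[n_{1} + \const] = (f_{n_{1} + \const})_{R_{0}}$ is precisely the identity just obtained. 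Since $\calP_{R}$ itself lifts $\calP_{R_{0}}$ and $\calP_{R}[n_{1}]$ compatibly, the output display may be taken to be $\calP_{R}$, and we obtain a morphism $g \colon \calP_{R} \to \calP'_{R}$ with $g_{R_{0}} = g_{0}$. It remains to verify $p g = f$. We have $(p g)_{R_{0}} = p g_{0} = f_{R_{0}}$, and the functor $\Disp^{\nilp}(R) \to \Disp^{\nilp}(R_{0})$ is faithful: by the $2$-Cartesian square of \Cref{prop:deformation-theory} the functor $\Disp^{\nilp}(R) \to \Disp^{\nilp}(R/R_{0})$ is faithful (because $\FilVect(R) \to \FilVect(R/R_{0})$ is faithful, both $\Hom$-sets being subsets of the set of homomorphisms of the underlying modules), while $\Disp^{\nilp}(R/R_{0}) \to \Disp^{\nilp}(R_{0})$ is an equivalence by \Cref{thm:deformation-theory}. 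Therefore $p g = f$, i.e.\ $f$ is divisible by $p$.

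The main obstacle is the case of an $\Fp$-algebra — or rather its effectivity. One must make the reduction steps of \Cref{prop:locus-divisible-by-p} and the computation of \Cref{lem:inhomogeneous-equation} explicit enough to see that divisibility by $p$ is detected by finitely many Witt-component identities of ``depth'' bounded in terms of $e$ alone, and one must ensure that the auxiliary finitely generated nilpotent ideals quotiented out along the way do not enlarge the dependencies of the resulting constant beyond $e$ and the codimension $c$. Granting this, the general case is a formal consequence of the deformation theory of displays together with \Cref{prop:p-torsionfree-truncated}.
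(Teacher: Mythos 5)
Your reduction of the general case to the case of $\Fp$-algebras is sound and is organized genuinely differently from the paper: you bootstrap along the pd-thickening $R \to R/pR$ using \Cref{cor: Corollary Def statement via truncated displays}, \Cref{prop:p-torsionfree-truncated} and the faithfulness of $\Disp^{\nilp}(R) \to \Disp^{\nilp}(R/pR)$, and this part works (up to the usual unique-up-to-unique-isomorphism bookkeeping when you "take the output display to be $\calP_R$"). The gap sits exactly in the step you flag and then grant: the effective characteristic-$p$ statement. The issue is not only bookkeeping. To apply \Cref{lem:inhomogeneous-equation} one needs, after a faithfully flat extension, the exact identity $(\adjust{\breve{Z}}_0)^{(p^{e - 1})} \dotsm \adjust{\breve{Z}}_0 = 0$, and in \Cref{prop:locus-divisible-by-p} this is arranged by passing to the quotient of $R$ by a nilpotent ideal. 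Your claim that this can be done "just as in the proof of \Cref{prop:p-torsionfree-truncated}" does not cover this: there the conclusion is the vanishing of a truncated morphism, which transfers back from $R/J$ to $R$ by \Cref{thm:deformation-theory} and faithfulness, whereas here the conclusion is that the untruncated $f$ is divisible by $p$ over $R$, and divisibility over $R/J$ does not formally imply divisibility over $R$ — lifting $p^{-1} f$ across a nilpotent ideal is precisely the Hodge-filtration deformation problem the proposition is about. Closing this requires reducing to a square-zero thickening and comparing the lifted $p^{-1} f$ with the truncated witness via \Cref{prop:p-torsionfree-truncated} and \Cref{thm:deformation-theory}, i.e.\ essentially the paper's own argument; so the characteristic-$p$ case is not obtainable by "truncating the matrix computation" alone, and the general case is not merely a formal consequence of effectivity.

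For comparison, the paper avoids any truncated matrix analysis: it uses \Cref{prop:locus-divisible-by-p} only as a black box to know that the divisibility locus is cut out by a finitely generated ideal, observes that this ideal is nilpotent because $f$ becomes divisible by $p$ over $(R/p)^{\perf}$ (its $1$-truncation vanishes), reduces to a square-zero thickening with trivial pd-structure, and then verifies the Hodge-filtration compatibility of the lifted $p^{-1} f$ against the truncated witness $\adjust{f}_n$ using \Cref{prop:p-torsionfree-truncated} and \Cref{thm:deformation-theory}. Your bootstrap along $R \to R/pR$ is an acceptable alternative to the paper's square-zero reduction, but the characteristic-$p$ input it demands is strictly stronger than what the paper needs, and as written it is not established.
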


\begin{proof}
    \enquote{$(1) \Longrightarrow (2)$}:
    This is clear.

    \enquote{$(2) \Longrightarrow (1)$}:
    Choose $n > \const_1 + \const_2$ where $\const_1 = \const_1(a, e, c)$ and $\const_2 = \const_2(a, e)$ are the constants from \Cref{prop:p-torsionfree-truncated} and \Cref{thm:deformation-theory}.
    Now suppose that there exists a morphism of $n$-truncated displays
    \[
        \adjust{f}_n \colon \calP[n] \to \calP'[n]
    \]
    with $f[n] = p \cdot \adjust{f}_n$.
    We already know by \Cref{prop:locus-divisible-by-p} that the locus in $\Spec(R)$ where $f$ is divisible by $p$ is cut out by a finitely generated ideal $I \subseteq R$.
    The base change of $f$ to $(R/p)^{\perf}$ is clearly divisible by $p$ because its $1$-truncation vanishes by assumption.
    Thus $I$ is in fact a nilpotent ideal and we may assume without loss of generality that $I^2 = 0$.

    Equip $I \subseteq R$ with the trivial pd-structure.
    Then $(p^{-1} \cdot f)_{R/I}$ lifts uniquely to a morphism of relative displays
    \[
        \roundbr[\big]{p^{-1} \cdot f}_{R/(R/I)} \colon \calP_{R/(R/I)} \to \calP'_{R/(R/I)}
    \]
    and our goal is to show that the induced homomorphism on underlying finite projective $R$-modules is compatible with the Hodge filtrations of $\calP$ and $\calP'$, see \Cref{prop:deformation-theory} and \Cref{thm:deformation-theory}.
    Note that this can be checked after passing to the $1$-truncation.

    Now we have
    \[
        p \cdot \roundbr[\Big]{\roundbr[\big]{p^{-1} \cdot f}_{R/I}[n] - \roundbr[\big]{\adjust{f}_n}_{R/I}} = 0,
        \quad \text{so that} \quad
        \roundbr[\big]{p^{-1} \cdot f}_{R/I}[n - \const_1] = \roundbr[\big]{\adjust{f}_n}_{R/I}[n - \const_1]
    \]
    by \Cref{prop:p-torsionfree-truncated}.
    Applying \Cref{thm:deformation-theory} we thus obtain
    \[
        \roundbr[\big]{p^{-1} \cdot f}_{R/(R/I)}[n - \const_1 - \const_2] = \roundbr[\big]{\adjust{f}_n}_{R/(R/I)}[n - \const_1 - \const_2] \, .
    \]
    As $(\adjust{f}_n)_{R/(R/I)}[n - \const_1 - \const_2]$ is the base change of the morphism $\adjust{f}_n[n - \const_1 - \const_2]$ of $(n - \const_1 - \const_2)$-truncated displays over $R$ it is compatible with Hodge filtrations, finishing the proof.
\end{proof}

        \subsection{Isogenies and quasi-isogenies}
        We discuss the notions of isogenies and quasi-isogenies of displays, roughly following \cite[Chapter 17.6]{ZinkSFB}.

\begin{lemma}
    Let $\calF = (\calS, \calI, S, \sigma, \sigma^{\divd})$ be a frame.
    Then, for a finite projective $(\calS, \calI)$-module $\calP$ of fixed height and dimension, there exists a natural isomorphism
    \[
        \det \roundbr[\big]{\calS \otimes_{(\calS, \calI)} \calP}^{\sigma} \to \det \roundbr[\big]{\calP^{\sigma}} \, .
    \]
    Consequently we obtain a natural functor
    \[
        \det \colon \curlybr[\Bigg]{\begin{gathered} \text{windows $\calP$ over $\calF$} \\ \text{of height $h$ and dimension $d$} \end{gathered}}
        \to
        \set[\Bigg]{(\calL, \Psi)}{\begin{gathered} \text{$\calL$ an invertible $\calS$-module,} \\ \text{$\Psi \colon \calL^{\sigma} \to S \otimes_{\calS} \calL$ an isomorphism} \end{gathered}} \, .
    \]
\end{lemma}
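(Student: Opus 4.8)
The plan is to build the isomorphism as (essentially) the identity, and to isolate the single nonformal ingredient, which is an elementary determinant identity for block matrices.

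\textbf{The isomorphism on objects.} Fix $h$ and $d$ and put $c = h - d$. Let $\calP = (T, L)$ be a finite projective $(\calS, \calI)$-module of height $h$ and dimension $d$, so that $T$ has rank $d$ and $L$ has rank $c$. Since base change commutes with finite direct sums, the $S$-module $(\calS \otimes_{(\calS, \calI)} \calP)^{\sigma} = (T \oplus L)^{\sigma}$ is canonically $T^{\sigma} \oplus L^{\sigma}$, which by \Cref{lem:modification} is precisely the $S$-module underlying $\calP^{\sigma}$. Since $\det$ denotes the top exterior power and $\det(M \oplus N) \simeq \det(M) \otimes \det(N)$ canonically, I would simply \emph{define} the map $\det\bigl((\calS \otimes_{(\calS, \calI)} \calP)^{\sigma}\bigr) \to \det(\calP^{\sigma})$ to be the resulting identification; as a map of $S$-modules there is nothing to prove. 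Note also that, since $T$ and $L$ are finite projective, base change commutes with $\det$, giving a canonical isomorphism $\det(\calS \otimes_{(\calS, \calI)} \calP)^{\sigma} \simeq \det\bigl((\calS \otimes_{(\calS, \calI)} \calP)^{\sigma}\bigr)$ that will be used below.

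\textbf{Naturality.} Let $M = \begin{bmatrix} a & b \\ c & d \end{bmatrix} \colon \calP \to \calP'$ be a morphism of finite projective $(\calS, \calI)$-modules, both of height $h$ and dimension $d$. Applying the forgetful functor and then $(-)^{\sigma}$ yields the $S$-linear map $T^{\sigma} \oplus L^{\sigma} \to T'^{\sigma} \oplus L'^{\sigma}$ with block matrix $\begin{bmatrix} a^{\sigma} & \overline{b}^{\sigma} \\ c^{\sigma} & d^{\sigma} \end{bmatrix}$, where $\overline{b} \colon L \to T'$ is $b$ composed with the map $\calI \otimes_{\calS} T' \to T'$; on the other hand \Cref{lem:modification} yields $\begin{bmatrix} a^{\sigma} & b^{\sigma, \divd} \\ p \cdot c^{\sigma} & d^{\sigma} \end{bmatrix}$. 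Linearizing the frame axiom $\sigma(x) = p \cdot \sigma^{\divd}(x)$ for $x \in \calI$ gives $\overline{b}^{\sigma} = p \cdot b^{\sigma, \divd}$, so naturality of the identification above amounts to the equality
\[
    \det \begin{bmatrix} a^{\sigma} & p \cdot b^{\sigma, \divd} \\ c^{\sigma} & d^{\sigma} \end{bmatrix} = \det \begin{bmatrix} a^{\sigma} & b^{\sigma, \divd} \\ p \cdot c^{\sigma} & d^{\sigma} \end{bmatrix} \colon \det(T^{\sigma} \oplus L^{\sigma}) \to \det(T'^{\sigma} \oplus L'^{\sigma}) \, .
\]
Zariski-locally on $\Spec(\calS)$ the modules $T, L, T', L'$ become free (and the equality of two $S$-linear maps may be checked Zariski-locally), so this reduces to the claim that, over an arbitrary commutative ring and for a block matrix $\begin{bmatrix} A & B \\ C & D \end{bmatrix}$ with $A$ of size $d$ and $D$ of size $c$, one has $\det \begin{bmatrix} A & tB \\ C & D \end{bmatrix} = \det \begin{bmatrix} A & B \\ tC & D \end{bmatrix}$ for every scalar $t$. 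This follows from the Leibniz formula: the monomial attached to a permutation $\pi$ of $\{1, \dotsc, d + c\}$ uses $\#\{i \leq d : \pi(i) > d\}$ entries from the upper-right block and $\#\{i > d : \pi(i) \leq d\}$ entries from the lower-left block, and these two numbers agree because $\pi$ is a bijection, so scaling either off-diagonal block by $t$ multiplies each monomial by the same power of $t$. (Alternatively one proves it as a polynomial identity over $\ZZ[t, (\text{entries})]$ by conjugating with the block-diagonal matrix having diagonal blocks $t \cdot \id_d$ and $\id_c$ over the fraction field and then specializing; the point of working over that domain is precisely that one never divides by $p$.)

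\textbf{The determinant functor.} For a window $(\calP, \Psi)$ over $\calF$ of height $h$ and dimension $d$ set $\calL \coloneqq \det(\calS \otimes_{(\calS, \calI)} \calP)$, an invertible $\calS$-module, and let $\Psi_{\det} \colon \calL^{\sigma} \to S \otimes_{\calS} \calL$ be the composite of the base-change isomorphism $\calL^{\sigma} \simeq \det\bigl((\calS \otimes_{(\calS, \calI)} \calP)^{\sigma}\bigr)$, the identification $\det\bigl((\calS \otimes_{(\calS, \calI)} \calP)^{\sigma}\bigr) = \det(\calP^{\sigma})$ from the first step, and $\det(\Psi) \colon \det(\calP^{\sigma}) \to \det(S \otimes_{(\calS, \calI)} \calP) = S \otimes_{\calS} \calL$. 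On a morphism of windows take $\det$ of the underlying morphism of $(\calS, \calI)$-modules; compatibility with the maps $\Psi_{\det}$ is then a diagram chase combining functoriality of $\det$, the defining compatibility of a window morphism with $\Psi$ and $\Psi'$, and the naturality established above. The only genuine obstacle in the whole argument is the block-determinant identity in the naturality step, and even that is elementary once one notes, via the Leibniz formula, that the two off-diagonal blocks occur with equal multiplicity in each monomial — the tempting "conjugate by a diagonal matrix" shortcut being invalid over $\calS$ itself because $p$ need not be a non-zero-divisor there.
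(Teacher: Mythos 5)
Your proof is correct and follows essentially the same route as the paper: define the isomorphism as the canonical identification coming from a normal decomposition, reduce naturality (after Zariski-localizing to the free case) to the identity $\det\begin{bmatrix} A & pB \\ C & D\end{bmatrix} = \det\begin{bmatrix} A & B \\ pC & D\end{bmatrix}$ via the Leibniz formula and the frame axiom $p \cdot \sigma^{\divd}(x) = \sigma(x)$, and then assemble the determinant functor by composing with $\det(\Psi)$. Your explicit count of off-diagonal entries in each Leibniz monomial just spells out what the paper leaves to the reader.
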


\begin{proof}
    Choose a normal decomposition $(T, L)$ of $\calP$.
    Then we have
    \[
        \det \roundbr[\big]{\calS \otimes_{(\calS, \calI)} \calP}^{\sigma} \simeq \det \roundbr[\big]{T \oplus L}^{\sigma}
        \quad \text{and} \quad
        \det \roundbr[\big]{\calP^{\sigma}} \simeq \det \roundbr[\big]{T^{\sigma} \oplus L^{\sigma}}
    \]
    so that we have an obvious candidate for the desired isomorphism.
    To check naturality we have to see that given a morphism $\calP \to \calP'$ of finite projective $(\calS, \calI)$-modules of the same height and dimension that is represented by a matrix
    \[
        \begin{bmatrix}
            a & b \\ c & d
        \end{bmatrix}
    \]
    with respect to a choice of normal decompositions of $\calP$ and $\calP'$ we have the equality
    \[
        \det \roundbr[\Bigg]{
            \begin{bmatrix}
                a & b \\ c & d
            \end{bmatrix}
        }^{\sigma}
        =
        \det \roundbr[\Bigg]{
            \begin{bmatrix}
                a^{\sigma} & b^{\sigma, \divd} \\ p \cdot c^{\sigma} & d^{\sigma}
            \end{bmatrix}
        } \, .
    \]
    When the normal decompositions are free then this is true by the Leibniz determinant formula and the identity $p \cdot \sigma^{\divd}(x) = \sigma(x)$ for $x \in \calI$; the general case then follows.

    The functor in the second part of the Lemma is now given by sending $\calP$ to the tuple $(\calL, \Psi)$ with $\calL = \det(\calS \otimes_{(\calS, \calI)} \calP)$ and
    \[
        \Psi \colon \det \roundbr[\big]{\calS \otimes_{(\calS, \calI)} \calP}^{\sigma} \to \det \roundbr[\big]{\calP^{\sigma}} \to \det \roundbr[\big]{S \otimes_{(\calS, \calI)} \calP} \simeq S \otimes_{\calS} \det \roundbr[\big]{\calS \otimes_{(\calS, \calI)} \calP} \, .
    \]
    This finishes the proof.
\end{proof}

\begin{lemma} \label{lem:local-systems-witt-vectors}
    Let $R$ be a $p$-nilpotent ring.
    Then we have a natural equivalence of categories
    \[
        \set[\Bigg]{(\calL, \Psi)}{\begin{gathered} \text{$\calL$ an invertible $W(R)$-module,} \\ \text{$\Psi \colon \calL^{F} \to \calL$ an isomorphism} \end{gathered}} \to \curlybr[\Big]{\text{$\Zp$-local systems of rank $1$ on $\Spec(R)$}} \, .
    \]
    When $R$ is an $\Fp$-algebra then we similarly have a natural equivalence of categories
    \[
        \set[\Bigg]{(\calL, \Psi)}{\begin{gathered} \text{$\calL$ an invertible $W_n(R)$-module,} \\ \text{$\Psi \colon \calL^{F} \to \calL$ an isomorphism} \end{gathered}} \to \curlybr[\Big]{\text{$\ZZ/p^n$-local systems of rank $1$ on $\Spec(R)$}} \, .
    \]
\end{lemma}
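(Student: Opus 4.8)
The plan is to realise the functor explicitly as ``taking Frobenius-invariants'' and to verify it is an equivalence by working étale-locally on $\Spec(R)$, where the statement turns into classical Artin--Schreier--Witt theory; the assertion over $W(R)$ for a general $p$-nilpotent ring $R$ will be deduced from the truncated assertion over $\Fp$-algebras.

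So assume first that $R$ is an $\Fp$-algebra and that $n < \infty$; recall that in this situation $\calW_n(R) = W_n(R)$, so the formulation of the lemma is unambiguous. Let $(\calL, \Psi)$ be a pair with $\calL$ an invertible $W_n(R)$-module and $\Psi \colon \calL^F \to \calL$ an isomorphism, and let $\psi \colon \calL \to \calL$ be the $F$-semilinear endomorphism obtained by precomposing $\Psi$ with the canonical $F$-semilinear map $\calL \to \calL^F$, $\ell \mapsto 1 \otimes \ell$. Since $F$ is the identity on the copy of $\ZZ/p^n = W_n(\Fp)$ in $W_n(\mathcal{O})$, the subsheaf $\calL^{\psi = \id}$ of $\psi$-fixed sections of the étale sheaf $\calL$ is a sheaf of $\ZZ/p^n$-modules; sending $(\calL, \Psi)$ to $\calL^{\psi = \id}$ will be the functor in question. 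I would make use of two inputs: (i) the additive Artin--Schreier--Witt sequence of étale sheaves $0 \to \ZZ/p^n \to W_n(\mathcal{O}) \xrightarrow{F - \id} W_n(\mathcal{O}) \to 0$ is exact; and (ii) the ``Lang map'' $L \colon W_n(\mathcal{O})^{\times} \to W_n(\mathcal{O})^{\times}$, $v \mapsto v \cdot F(v)^{-1}$, is an epimorphism of étale sheaves with kernel the constant sheaf $\underline{(\ZZ/p^n)^{\times}}$. Here (i) is classical, and (ii) follows from Lang's theorem applied to $W_n^{\times}$, which is a smooth connected commutative group scheme over $\Fp$ whose Witt-vector Frobenius is its absolute ($p$-power) Frobenius endomorphism, with finite fixed-point group scheme $\underline{(\ZZ/p^n)^{\times}}$ (alternatively (ii) may be deduced from (i) by dévissage along the natural filtration of $W_n^{\times}$).

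Granting (i) and (ii): the functor takes values in rank-$1$ $\ZZ/p^n$-local systems, for étale-locally we may assume $\calL = W_n(\mathcal{O})$ and that $\Psi$ is multiplication by a unit $u \in W_n(R)^{\times}$, and then, passing to a further étale cover, surjectivity of $L$ provides $v_0 \in W_n(\mathcal{O})^{\times}$ with $v_0 = u \cdot F(v_0)$, whereupon multiplication by $v_0^{-1}$ identifies $\calL^{\psi = \id}$ with $W_n(\mathcal{O})^{F = \id} = \underline{\ZZ/p^n}$. For the equivalence, I would observe that the stack of pairs $(\calL, \Psi)$ with $\calL$ invertible and the stack of rank-$1$ $\ZZ/p^n$-local systems on $\Spec(R)$ are both neutral gerbes banded by $\underline{(\ZZ/p^n)^{\times}}$: in each case there is a global object ($(W_n(R), \id)$, respectively $\underline{\ZZ/p^n}$), local connectedness is precisely the surjectivity of $L$, and the band --- the automorphism sheaf of the global object --- is $\underline{(\ZZ/p^n)^{\times}}$ by (i) and (ii). Since the functor sends $(W_n(R), \id)$ to $\underline{\ZZ/p^n}$ and induces the identity on automorphism sheaves, it is an equivalence; in concrete terms this amounts to faithfully flat descent along the ``$\ZZ/p^n$-Galois cover of sheaves'' $\underline{\ZZ/p^n} \to W_n(\mathcal{O})$.

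It remains to pass from the truncated case over $\Fp$-algebras to the statement over $W(R)$ for an arbitrary $p$-nilpotent ring $R$. First, replacing $R$ by $R/pR$ affects neither side: for local systems this is topological invariance of the étale site along the nilpotent thickening $\Spec(R/pR) \hookrightarrow \Spec(R)$, and for the left-hand side it is because $W(R) \to W(R/pR)$ is surjective with nilpotent kernel --- which, via $W(R) = \lim_n W_n(R)$, reduces to the elementary fact that the kernel of $W_n(R) \to W_n(R/pR)$ is nilpotent since $p$ is nilpotent in $R$ --- so that invertible modules together with their $F$-structures lift essentially uniquely. We may thus take $R$ to be an $\Fp$-algebra. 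Then $W(R) = \lim_n W_n(R)$ with the transition maps $W_{n + 1}(R) \to W_n(R)$ square-zero extensions, so an invertible $W(R)$-module with $F$-structure is the same datum as a compatible system of invertible $W_n(R)$-modules with $F$-structure; on the other side a rank-$1$ $\Zp$-local system on the quasi-compact quasi-separated scheme $\Spec(R)$ is the same datum as a compatible system of rank-$1$ $\ZZ/p^n$-local systems. Combining these identifications with the truncated equivalence established above completes the argument. The crux of the proof is the étale-local analysis in the truncated case --- the surjectivity of $L$ and the clean bookkeeping of the two gerbes; granting that, all the reductions and the passage to the limit are formal.
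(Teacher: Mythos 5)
Your proof is correct, and its engine is the same as the paper's: the surjectivity of the Lang-type map $x \mapsto x\cdot F(x)^{-1}$ on Witt-vector units, which trivializes any pair $(\calL,\Psi)$ locally, combined with descent and the identification of the Frobenius-fixed points $W(R)^{F=\id}$ (resp.\ $W_n(R)^{F=\id}$) with continuous $\Zp$- (resp.\ $\ZZ/p^n$-)valued functions. The packaging differs in two respects. First, the paper argues fpqc-locally and in one stroke for arbitrary $p$-nilpotent $R$, quoting that $L^+\bbG_m \to L^+\bbG_m$, $x \mapsto F(x)/x$, is surjective as a map of affine group schemes over $\Spf(\Zp)$; you instead first prove the truncated statement over $\Fp$-algebras using Lang's theorem for the smooth connected commutative group $W_n^\times$ over $\Fp$ (so your trivializations are \emph{étale}-local, a finer output than the paper needs), and then recover the untruncated statement over general $p$-nilpotent $R$ by dévissage: topological invariance of the étale site and the nilpotence of $\ker(W(R)\to W(R/pR))$ on one hand, and the square-zero tower $W(R)=\lim_n W_n(R)$ on the other. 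Second, you organize the comparison as a morphism of neutral gerbes banded by $\underline{(\ZZ/p^n)^{\times}}$, which cleanly packages the descent the paper invokes in one line. Your extra reduction steps (unique lifting of $(\calL,\Psi)$ along the nilpotent kernel, and the identification of pairs over $W(R)$ with compatible systems over the $W_n(R)$) are stated rather than proved, but both come down to the standard fact that $\id - \Psi'F(\cdot)\Psi^{-1}$ is invertible on an ideal on which $F$ is nilpotent for the relevant finite filtration; this is routine and does not constitute a gap.
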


\begin{proof}
    First observe that we have natural isomorphisms
    \[
        W(R)^{F = \id} \simeq W(R/p)^{F = \id} \simeq \Cont \roundbr[\big]{\abs{\Spec(R)}, \Zp}
    \]
    and, when $R$ is an $\Fp$-algebra, $W_n(R)^{F = \id} \simeq \Cont(\abs{\Spec(R)}, \ZZ/p^n)$.
    By descent it thus suffices to show that every object $(\calL, \Psi)$ as above is fpqc-locally trivial.
    This follows because the morphism
    \[
        L^+ \bbG_m \to L^+ \bbG_m, \qquad x \mapsto F(x)/x
    \]
    of affine group schemes over $\Spf(\Zp)$ is surjective.
\end{proof}

Recall that associated to a morphism of $\Zp$-local systems $f \colon \bbL \to \bbL'$ of rank $1$ on $\Spec(R)$ there is a continuous function
\[
    \nu_p(f) \colon \abs[\big]{\Spec(R)} \to \ZZ_{\geq 0} \cup \curlybr{\infty}
\]
that is induced by the $p$-adic valuation on $\Zp$.
Similarly, associated to a morphism of $\ZZ/p^n$-local systems $f \colon \bbL \to \bbL'$ of rank $1$ on $\Spec(R)$ there is a continuous function $\nu_p(f) \colon \abs{\Spec(R)} \to \curlybr{0, \dotsc, n - 1, \infty}$.

\medskip

Given a display $\calP$ over $R$ we write $\calP[1/p]$ for its associated \emph{iso-display} (see \cite[Definition 61 and Example 63]{ZinkDisplay}).
Recall that for displays $\calP$ and $\calP'$ over $R$ we have
\[
    \Hom_R \roundbr[\big]{\calP[1/p], \calP'[1/p]} \simeq \Hom_R \roundbr[\big]{\calP, \calP'}[1/p],
\]
see \cite[Proposition 66]{ZinkDisplay}.
Given displays $\calP$ and $\calP'$ over $R$ we write $f \colon \calP \dashrightarrow \calP'$ to indicate that $f$ is a morphism of iso-displays $\calP[1/p] \to \calP'[1/p]$ and say that $f$ is a \emph{morphism of displays up to isogeny}.

\begin{lemma} \label{lem:isogeny-equivalent}
    Let $\calP$ and $\calP'$ be displays over $R$ of the same constant height and dimension and let $f \colon \calP \to \calP'$ be a morphism.
    Then the following conditions are equivalent:
    \begin{enumerate}
        \item \label{item:isogeny-equivalent-1}
        $f$ induces an isomorphism of iso-displays $\calP[1/p] \to \calP'[1/p]$.

        \item \label{item:isogeny-equivalent-2}
        $\nu_p(\det(f))(x) < \infty$ for every $x \in \abs{\Spec(R)}$ (where we view $\det(f)$ as a morphism of $\Zp$-local systems).
    \end{enumerate}
\end{lemma}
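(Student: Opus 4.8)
The plan is to reduce the assertion to the case of rank-one objects via the determinant functor constructed above, and then to translate the resulting rank-one statement into the language of $\Zp$-local systems by means of \Cref{lem:local-systems-witt-vectors}.

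The first point to record is an elementary observation about iso-displays: a morphism of iso-displays over $R$ is an isomorphism as soon as the underlying $W(R)[1/p]$-linear map of the underlying finite projective modules is an isomorphism, since then its module-theoretic inverse automatically commutes with the Frobenius-linear structure maps and is hence again a morphism of iso-displays. Thus condition \ref{item:isogeny-equivalent-1} is equivalent to the statement that the underlying $W(R)[1/p]$-linear map $\alpha$ of $f$ is an isomorphism. Now $\alpha$ is a map of finite projective $W(R)[1/p]$-modules of the same rank $h$, the common height of $\calP$ and $\calP'$ (this is where the hypothesis on constant height and dimension enters), so $\alpha$ is an isomorphism if and only if its top exterior power $\wedge^h \alpha$ is. By naturality of the determinant functor constructed above --- and using that in the Witt frame the module $\calI = I_R$ is literally an ideal of $W(R)$, so that the forgetful functor to $W(R)$-modules sends a morphism $\begin{bmatrix} a & b \\ c & d \end{bmatrix}$ to the same matrix --- the map $\wedge^h \alpha$ is precisely the underlying $W(R)[1/p]$-linear map of $\det(f)$. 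Therefore \ref{item:isogeny-equivalent-1} is equivalent to the underlying $W(R)[1/p]$-linear map of $\det(f)$ being an isomorphism.

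It remains to identify the latter with condition \ref{item:isogeny-equivalent-2}. By \Cref{lem:local-systems-witt-vectors} the pairs $\det(\calP)$ and $\det(\calP')$ correspond to $\Zp$-local systems $\bbL$ and $\bbL'$ of rank one on $\Spec(R)$ and $\det(f)$ corresponds to a morphism $\phi \colon \bbL \to \bbL'$; by the very definition of the function $\nu_p$ recalled before the statement one has $\nu_p(\det(f)) = \nu_p(\phi)$. The underlying map of $\det(f)$ is an isomorphism after inverting $p$ if and only if $\phi$ becomes an isomorphism after inverting $p$, and --- checking on stalks, or passing to an \'etale cover trivializing $\bbL$ and $\bbL'$ and using that $\abs{\Spec(R)}$ is quasi-compact --- this holds if and only if $\phi$ is non-vanishing at every point, i.e.\ if and only if $\nu_p(\phi)(x) < \infty$ for all $x \in \abs{\Spec(R)}$. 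This proves the equivalence of \ref{item:isogeny-equivalent-1} and \ref{item:isogeny-equivalent-2}.

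The only steps that are not purely formal are the bookkeeping around the determinant functor --- its compatibility on morphisms with the forgetful functor to $W(R)$-modules, and the standard fact that a morphism of finite projective modules of equal rank over a commutative ring is an isomorphism exactly when its top exterior power is --- and I expect this to be the (mild) main obstacle; once it is in place, the rest is a direct unwinding of \Cref{lem:local-systems-witt-vectors} and of the definition of $\nu_p$.
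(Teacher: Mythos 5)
Your proof is correct and follows essentially the same route as the paper's: reduce condition (1) to the underlying $W(R)[1/p]$-linear map being an isomorphism, check that on the determinant, and translate via \Cref{lem:local-systems-witt-vectors} into the condition $\nu_p(\det(f)) < \infty$ pointwise. The paper states this in three lines; you have merely filled in the routine bookkeeping.
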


\begin{proof}
    A morphism of iso-displays over $R$ is an isomorphism if and only if it is an isomorphism of underlying (finite-projective) $W(R)[1/p]$-modules.
    This property can be checked on the determinant.
    Thus we obtain
    \[
        (\ref{item:isogeny-equivalent-1})
        \Longleftrightarrow
        \roundbr[\Big]{\text{$\det(f)[1/p]$ is an isomorphism}}
        \Longleftrightarrow
        (\ref{item:isogeny-equivalent-2})
    \]
    as desired.
\end{proof}

\begin{definition}
    Let $\calP$ and $\calP'$ be displays over $R$ of the same constant height and dimension.
    Then a morphism $f \colon \calP \to \calP'$ is called an \emph{isogeny} if it satisfies the equivalent conditions from \Cref{lem:isogeny-equivalent}.
    We say that $f$ is \emph{of height $m$} if $\nu_p(\det(f)) = m$.
    Similarly, a \emph{quasi-isogeny} $f \colon \calP \dashrightarrow \calP'$ is an isomorphism of iso-displays $\calP[1/p] \to \calP'[1/p]$.

    Now let $\calP$, $\calP'$ be $n$-truncated displays over $R$ of the same constant height and dimension and let $m < n$.
    Then a morphism $f \colon \calP \to \calP'$ is called an \emph{isogeny of height $m$} if we have $\nu_p(\det(f)) = m$.
\end{definition}

\begin{remark} \label{rmk:dividing-det}
    Let $f \colon \calP \to \calP'$ be a morphism of displays over $R$ of the same constant height and dimension.
    Then it follows from the definition and \Cref{lem:local-systems-witt-vectors} that $f$ is an isogeny of height $m$ if and only if we can write
    \[
        \det(f) = p^m \cdot \varepsilon
    \]
    for some (uniquely determined) isomorphism $\varepsilon \colon \det(\calP) \to \det(\calP')$.

    Similarly, a morphism $f \colon \calP \to \calP'$ of $n$-truncated displays over an $\Fp$-algebra $R$ of the same constant height and dimension is an isogeny of height $m < n$ if and only if we can write
    \[
        \det \roundbr[\big]{f_{R/p}} = p^m \cdot \varepsilon
    \]
    for some (uniquely determined) isomorphism $\varepsilon \colon \det(\calP_{R/p})[n - m] \to \det(\calP'_{R/p})[n - m]$.
\end{remark}

\begin{remark}
    Let $\mathcal{P},\mathcal{P}^{\prime}$ be two nilpotent displays of the same height over $R$. 
	Then $f \colon \mathcal{P} \rightarrow \mathcal{P}^{\prime}$ is an isogeny of displays if and only if $\BT(f) \colon \BT(\mathcal{P})\rightarrow \BT(\mathcal{P}^{\prime})$ is an isogeny of $p$-divisible groups.
	This follows from \cite[Proposition 66]{ZinkDisplay} and by the fully faithfulness part of the equivalence of categories between nilpotent displays and formal $p$-divisible groups established in \cite{LauInventiones}.
\end{remark}

\medskip

We now record two consequences of the results from \Cref{sec:morphisms}.
To make sense of the following proposition, recall that for nilpotent displays $\calP$ and $\calP'$ over $R$ the morphisms $\Hom_R(\calP, \calP')$ are $p$-torsionfree by \Cref{prop:p-torsionfree} and thus inject into $\Hom_R(\calP[1/p], \calP'[1/p])$.

\begin{proposition} \label{cor:locus-isogeny}
    Let $f \colon \calP \dashrightarrow \calP'$ be a morphism of nilpotent displays over $R$ up to isogeny.
    Then the functor on $R$-algebras
    \[
        S \mapsto \begin{cases}
            \curlybr{\ast} & \text{if $f_S \colon \calP_S \dashrightarrow \calP'_S$ is a morphism of displays,}
            \\
            \varnothing & \text{else}
        \end{cases}
    \]
    is representable by $\Spec(R/J)$ for a finitely generated ideal $J \subseteq R$.
\end{proposition}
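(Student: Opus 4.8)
The plan is to reduce the question to the already-established \Cref{prop:locus-divisible-by-p}, which says that the locus where a morphism of nilpotent displays is divisible by $p$ is cut out by a finitely generated ideal. Since $f \colon \calP \dashrightarrow \calP'$ is a morphism of iso-displays, by \cite[Proposition 66]{ZinkDisplay} there is some integer $N \geq 0$ such that $g \coloneqq p^N f$ is an honest morphism of displays $g \colon \calP \to \calP'$; fix such an $N$. For an $R$-algebra $S$, the morphism $f_S$ is a morphism of displays if and only if $g_S = p^N f_S$ is divisible by $p^N$ inside $\Hom_S(\calP_S, \calP'_S)$, where we use that $\Hom_S(\calP_S, \calP'_S)$ is $p$-torsionfree by \Cref{prop:p-torsionfree} and therefore injects into $\Hom_S(\calP_S[1/p], \calP'_S[1/p])$, so that ``being divisible by $p^N$'' is an unambiguous condition that exactly detects whether the iso-morphism $f_S$ lands in the integral $\Hom$.

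The next step is to realize ``divisible by $p^N$'' as an $N$-fold iterated application of ``divisible by $p$''. Concretely, I would argue by induction on $N$. Let $Z_1 = \Spec(R/J_1) \hookrightarrow \Spec(R)$ be the closed subscheme representing the locus where $g = g^{(0)}$ is divisible by $p$, which exists with $J_1$ finitely generated by \Cref{prop:locus-divisible-by-p}. Over $R/J_1$ we then have a canonical morphism of displays $g^{(1)} \coloneqq p^{-1} g_{R/J_1} \colon \calP_{R/J_1} \to \calP'_{R/J_1}$ (canonical because $\Hom$ is $p$-torsionfree). Applying \Cref{prop:locus-divisible-by-p} to $g^{(1)}$ over the ring $R/J_1$ yields a further closed subscheme $\Spec((R/J_1)/\overline{J_2})$ with $\overline{J_2}$ finitely generated; letting $J_2 \subseteq R$ be the preimage of $\overline{J_2}$, this $J_2$ is again finitely generated and $R/J_2$ is the locus where $g$ is divisible by $p^2$. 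Iterating $N$ times produces a finitely generated ideal $J = J_N \subseteq R$ such that $\Spec(R/J)$ represents the locus where $g$ is divisible by $p^N$, i.e.\ exactly the locus where $f$ is a morphism of displays. It remains only to check the compatibility: a morphism of displays over $S$ that becomes divisible by $p^k$ over a quotient $S \twoheadrightarrow S'$ is so divisibly precisely when the relevant classifying map factors, and this is formal from the universal property of the $\Spec(R/J_i)$ together with the $p$-torsionfreeness that makes $p^{-1} g$ well-defined and functorial in $S$.

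The main obstacle I anticipate is purely bookkeeping rather than conceptual: one must make sure that at each stage the ``divide by $p$'' operation is genuinely functorial and that the successive quotients glue into a single honest closed immersion $\Spec(R/J) \hookrightarrow \Spec(R)$ representing the stated functor on \emph{all} $R$-algebras $S$, not merely on quotients of $R$. This is exactly what \Cref{prop:p-torsionfree} buys us: over any $S$ the element $p^{-k} g_S$ is unambiguous once it exists, so the functor ``$f_S$ is a morphism of displays'' is honestly represented by the scheme-theoretic locus $\bigcap$ of the conditions, and that intersection stabilizes after $N$ steps. A secondary point worth being careful about is that $N$ depends only on $f$ over $R$ (not on $S$), which is immediate since $p^N f$ lifting to an integral morphism over $R$ base-changes to an integral morphism over every $S$; hence the same $N$ works uniformly, and no issues of non-uniformity arise.
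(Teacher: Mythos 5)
Your proposal is correct and follows essentially the same route as the paper: clear denominators via $g = p^N f$ using \cite[Proposition 66]{ZinkDisplay}, observe that $f_S$ being integral is equivalent to $g_S$ being divisible by $p^N$ (unambiguous by \Cref{prop:p-torsionfree}), and conclude from \Cref{prop:locus-divisible-by-p}. The only difference is that you spell out the $N$-fold iteration of the divisible-by-$p$ locus, which the paper leaves implicit; your bookkeeping there is sound.
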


\begin{proof}
    Choose a non-negative integer $r$ such that $p^r f \colon \calP \dashrightarrow \calP'$ is an isogeny.
    Then for an $R$-algebra $S$ we have
    \[
        \roundbr[\Big]{\text{$f_S \colon \calP_S \dashrightarrow \calP'_S$ is a morphism of displays}} \Longleftrightarrow \roundbr[\Big]{\text{$(p^r f)_S \colon \calP_S \to \calP'_S$ is divisible by $p^r$}}
    \]
    so that we can conclude by \Cref{prop:locus-divisible-by-p}.
\end{proof}

\begin{corollary} \label{cor:inverting-isogenies}
    Let $f \colon \calP \to \calP'$ be an isogeny of nilpotent displays over $R$ of height $m$.
    Then the quasi-isogeny
    \[
        p^m \cdot f^{-1} \colon \calP' \dashrightarrow \calP
    \]
    is an isogeny.
\end{corollary}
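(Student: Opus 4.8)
It suffices to prove that $g := p^m \cdot f^{-1}$ is a morphism of displays $\calP' \to \calP$; indeed, once that is known, $g$ is automatically an isogeny, since $\det(g) = p^{m(h-1)} \cdot \varepsilon^{-1}$ (with $h$ the common height of $\calP, \calP'$ and $\det(f) = p^m \varepsilon$ by \Cref{rmk:dividing-det}), so $\nu_p(\det g)$ is everywhere finite. By \Cref{cor:locus-isogeny} the locus in $\Spec(R)$ over which $g$ is a morphism of displays is cut out by a finitely generated ideal $J \subseteq R$, and the task is to show $J = 0$.

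The computational input is the adjugate matrix. After a Zariski localization I may assume $\calP$ and $\calP'$ have free normal decompositions, so that $f$ is represented by a matrix $M \in \Mat_{h \times h}(W(R))_{\mu}$ with $\det(M) = p^m u$ for some $u \in W(R)^{\times}$ (again by \Cref{rmk:dividing-det}). I claim $N := u^{-1} \cdot \mathrm{adj}(M)$ again lies in $\Mat_{h \times h}(W(R))_{\mu}$: integrality of the entries is clear, and the block condition comes down to a pigeonhole argument showing that every $(h-1)\times(h-1)$ minor of $M$ computing an entry of the upper‑right block of $\mathrm{adj}(M)$ must use an entry of the $I_R$‑block of $M$ (deleting the relevant row and column leaves only $d-1$ columns from the $T$‑part but still $d$ rows from the $T$‑part). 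From $\mathrm{adj}(M)\cdot M = M \cdot \mathrm{adj}(M) = \det(M)\cdot I$ we get $NM = MN = p^m\cdot I$, so $N$ represents a morphism of $(W(R),I_R)$‑modules $\calP' \to \calP$ which agrees with $g$ after inverting $p$; and a short computation using $MU = U'\Phi(M)$ together with the multiplicativity of $\Phi$ (\Cref{lem:modification}) shows that $N$ satisfies the window equation $NU' = U\Phi(N)$ only up to $p^m$‑torsion, i.e.\ $p^m(NU' - U\Phi(N)) = 0$. When $R$ is a perfect $\Fp$‑algebra the ring $W(R)$ is $p$‑torsionfree, so there $N$ is an honest window and $g$ is a morphism of displays. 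Applying this after base change along the residue fields and to $(R/p)^{\perf}$ — and using that being a morphism of displays can be tested after a faithfully flat base change (cf.\ the proof of \Cref{prop:p-torsionfree}) — shows that $g$ is a morphism of displays over every field over $R$, whence $J$ is a nilpotent ideal.

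It remains to promote this over a square‑zero pd‑thickening, i.e.\ by dévissage one reduces to showing $J = 0$ under the extra hypothesis $J^2 = 0$. Here I would argue exactly as in \Cref{lem:locus-divisible-by-p-deform} and \Cref{prop:divisible-by-p-truncated}: equip $J$ with its trivial divided powers, lift the genuine morphism of displays $g_{R/J}$ uniquely to a morphism of relative displays $\calP'_{R/(R/J)} \to \calP_{R/(R/J)}$ by \Cref{thm:deformation-theory} (it coincides with $g$ after inverting $p$ by rigidity of morphisms of nilpotent displays under nilpotent thickenings), and then invoke \Cref{prop:deformation-theory} to reduce the assertion to the statement that the induced map of $R$‑modules is compatible with the Hodge filtrations of $\calP'$ and $\calP$ over $R$ — a closed condition that holds after base change to $(R/p)^{\perf}$ because over a perfect base the adjugate already produced the full window structure. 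The technical heart of the argument is precisely this last step: over a ring whose Witt vectors carry $p$‑torsion the adjugate recovers the window structure of $p^m f^{-1}$ only modulo $p^m$‑torsion, and to see that this ambiguity vanishes one genuinely has to feed in Grothendieck–Messing deformation theory as packaged in \Cref{thm:deformation-theory}.

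Finally, at the cost of invoking $p$‑divisible groups one can bypass all of the above using the remark preceding this corollary: $\BT(f)$ is an isogeny of $p$‑divisible groups whose kernel is a finite flat group scheme of order $p^m$, hence — by passing to geometric points and taking a composition series into simple group schemes of order $p$ — is killed by $p^m$, so $p^m \cdot \BT(f)^{-1} = \BT(p^m f^{-1})$ is an honest isogeny of $p$‑divisible groups, and full faithfulness of the equivalence of \cite{LauInventiones} shows that $p^m f^{-1}$ is a morphism of displays.
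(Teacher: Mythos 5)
Your overall skeleton agrees with the paper's up to the last step: both proofs use \Cref{cor:locus-isogeny} to see that the locus where $p^m f^{-1}$ is a morphism of displays is cut out by a finitely generated ideal $J$, and both reduce $J$ to a nilpotent ideal by checking the statement over $(R/p)^{\perf}$ (your adjugate computation, with $N \coloneqq u^{-1} M^{\ad}$ and the observation that the window equation holds up to $p^m$-torsion, is a correct way to make the paper's ``it is clear'' explicit). The problem is the final step, where you must pass from ``$J$ nilpotent'' to ``$J = 0$''. After reducing to $J^2 = 0$ and lifting $g_{R/J}$ to a relative morphism $\tilde{g}$ via \Cref{thm:deformation-theory}, you correctly identify the remaining obstruction as the compatibility of $\tilde{g}$ with the Hodge filtrations over $R$ (\Cref{prop:deformation-theory}); but your verification of that condition — ``it holds after base change to $(R/p)^{\perf}$'' — is vacuous. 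The ideal cutting out the Hodge compatibility is (essentially) $J$ itself, and $J$ lies in the kernel of $R \to (R/p)^{\perf}$ precisely because it is nilpotent, so checking the condition after that base change only re-proves nilpotence and gives no new information. Concretely: $\tilde{g}$ and $N$ both split $f$ up to $p^m$, so $\tilde{g} - N$ is $p^m$-torsion in $\Hom_{W(R)}(P', P)$, but $p$-torsion elements of $W(R)$ need not lie in $I_R$ when $R$ has nilpotents, so the induced discrepancy on $P'/I_RP' \to P/I_RP$ can be a nonzero nilpotent — exactly the ambiguity you need to kill and do not. The paper closes this gap by invoking the argument of \cite[Proposition 17.6.4]{ZinkSFB}, which is designed precisely for the situation where the defining ideal is already known to be nilpotent; some such additional input is genuinely needed here.

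Your final paragraph (via $\BT$ and Deligne's theorem that a finite flat group scheme of order $p^m$ is killed by $p^m$) would indeed give the statement, but it runs against the declared purpose of this section — the paper explicitly wants these foundational isogeny results \emph{independently} of $p$-divisible groups — and the ``composition series'' justification you sketch does not work over a general base (one should cite Deligne's theorem directly). So it is acceptable only as a sanity check, not as the intended proof.
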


\begin{proof}
    We already know from \Cref{cor:locus-isogeny} that the locus in $\Spec(R)$ where $p^m f^{-1}$ is an isogeny is cut out by a finitely generated ideal $I \subseteq R$.
    Now it is clear that $p^m f^{-1}$ is an isogeny when base changed to $(R/p)^{\perf}$ so that $I$ is in fact a nilpotent ideal.
    In this situation the argument from the proof of \cite[Proposition 17.6.4]{ZinkSFB} can be applied to see that $I = 0$ as desired.
\end{proof}

    \section{Proof of representability}

        \subsection{Moduli problem}
        Fix a perfect field $\F$ and a nilpotent display $\calP_0$ over $\F$ of nilpotency order $e_0$.
Then we consider the moduli problem
\[
	\calM \colon \Nilp_{W(\F)} \rightarrow \Set, \qquad
    R \mapsto \set[\Big]{(\calP, \rho)}{\text{$\calP \in \Disp(R)$, $\rho \colon \calP_{R/p} \dashrightarrow \calP_{0, R/p}$ a quasi-isogeny}} \, .
\]
Fix a lift $\widetilde{\calP}_0$ of $\calP_{0}$ to a nilpotent display over $\Spf(W(\F))$.
Then $\calM$ equivalently classifies tuples $(\calP, \rho)$ where $\rho$ is now a quasi-isogeny $\calP \dashrightarrow \widetilde{\calP}_{0, R}$.
It follows from Zink's Witt descent \cite[Theorem 37]{ZinkDisplay} that $\calM$ is a sheaf for the fpqc-topology.

We also consider the subfunctor
\[
	\calM^r \subseteq \calM
\]
parametrizing those $(\calP, \rho)$ such that $p^r \rho \colon \calP \to \widetilde{\calP}_{0, R}$ is an isogeny of displays.
By \Cref{cor:locus-isogeny} the inclusion $\calM^r \to \calM$ is a representable finitely presented closed immersion.

Note that $\calM^{r}$ further decomposes as a disjoint union
\[
    \calM^r = \bigsqcup_{s \geq 0} \calM^{r, s}
\]
where $\calM^{r, s} \subseteq \calM^r$ is given by the condition that $p^r \rho$ is an isogeny of height $s$.

        \subsection{Representability of $\calM^r$}
        In this section we show the representability of $\calM^r$ by a $p$-adic formal scheme.

\begin{theorem}[Artin's criterion] \label{thm:artin}
    Let $X \colon \Nilp_{W(\F)} \to \Set$ be an étale sheaf that satisfies the following conditions:
    \begin{enumerate}
        \item \label{item:artin-diagonal}
        The diagonal $\Delta \colon X \to X \times X$ is a closed immersion.

        \item \label{item:artin-rs}
        Let
        \[
        \begin{tikzcd}
            & R_2 \ar[d]
            \\
            R_1 \ar[r]
            & R
        \end{tikzcd}
        \]
        be a diagram of $p$-nilpotent $W(\F)$-algebras such that $R_1 \to R$ is surjective with nilpotent kernel.
        Then the natural map
        \[
            X \roundbr[\big]{R_1 \times_R R_2} \to X(R_1) \times_{X(R)} X(R_2)
        \]
        is bijective.

        \item \label{item:artin-tangent}
        Given a finite extension $\F'$ of $\F$ and a point $x \in X(\F')$ the tangent space $T_x X$ has finite dimension.

        \item \label{item:artin-algebraize}
        Let $A$ be a $p$-nilpotent first countable admissible linearly topologized $W(\F)$-algebra.
        Then the natural map
        \[
            X(A) \to X \roundbr[\big]{\Spf(A)}
        \]
        is bijective.

        \item \label{item:artin-lofp}
        $X$ is locally of finite presentation (i.e.\ it commutes with filtered colimits, see \cite[Tag 049J]{stacks}).
    \end{enumerate}
    Then $X$ is a $p$-adic formal algebraic space that is separated and locally of finite type over $\Spf(W(\F))$.
\end{theorem}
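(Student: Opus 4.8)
The plan is to deduce this from the classical form of Artin's representability criterion for algebraic spaces, applied separately at each level of the $p$-adic filtration, and then to glue. Since $X$ is locally of finite presentation by \ref{item:artin-lofp}, it is the left Kan extension of its restriction to finitely presented $p$-nilpotent $W(\F)$-algebras; writing such an algebra as a finite type $W(\F)/p^n$-algebra for varying $n$, we obtain for each $n \geq 1$ a functor $X_n$ on finite type $W(\F)/p^n$-algebras — the ``$p^n$-torsion part'' of $X$ — and $X = \colim_n X_n$. Here $W(\F)$ is a complete Noetherian local ring with maximal ideal $(p)$, hence excellent, so each $W(\F)/p^n$ and every finite type algebra over it is excellent and Noetherian; thus we are squarely in the classical setting and it suffices to prove that every $X_n$ is an algebraic space separated and locally of finite type over $W(\F)/p^n$.

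First I would check that each $X_n$ satisfies the hypotheses of Artin's criterion. The étale sheaf property and local finite presentation are inherited from $X$. Condition \ref{item:artin-rs} is precisely the Rim--Schlessinger homogeneity condition: $X$ commutes with the relevant fiber products $R_1 \times_R R_2$. From this one extracts in the standard way a deformation theory and an obstruction theory for $X_n$ in Artin's sense, and condition \ref{item:artin-tangent} supplies that the tangent spaces $T_x X_n$ at finite type points $x$ are finite dimensional; together these give finiteness of all the deformation and obstruction modules involved, and from homogeneity, local finite presentation and these finiteness statements one obtains openness of versality. Effectivity of formal objects is exactly condition \ref{item:artin-algebraize}: a complete Noetherian local $W(\F)/p^n$-algebra $A$, with its $\mathfrak{m}$-adic topology, is a first countable admissible linearly topologized $p$-nilpotent $W(\F)$-algebra, and $X(\Spf A) = \lim_k X(A/\mathfrak m^k)$ by definition, so \ref{item:artin-algebraize} says formal objects algebraize over $A$. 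Finally, condition \ref{item:artin-diagonal} says the diagonal of $X$, hence of each $X_n$, is a closed immersion: in particular it is representable and separated, which is what lets one pass from a smooth groupoid presentation to an honest algebraic space and which forces the output to be separated.

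With these verified, the classical Artin criterion (see \cite{stacks} and Artin's original work) produces, for each finite type point $x$ of $X_n$, an affine $W(\F)/p^n$-scheme $U$ of finite type together with a smooth surjection $U \to X_n$ through $x$; combined with representability of the diagonal this shows $X_n$ is an algebraic space, separated and locally of finite type over $W(\F)/p^n$. The formation of $X_n$ from $X_{n+1}$ is compatible — by construction $X_n$ is the closed subspace of $X_{n+1}$ cut out by $p^n$, using \ref{item:artin-rs} — so $X = \colim_n X_n$ is a $p$-adic formal algebraic space, and separatedness and local finite-typeness descend through the colimit to give the statement over $\Spf(W(\F))$.

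The main obstacle is the bookkeeping in the middle step: converting the single homogeneity statement \ref{item:artin-rs} together with the tangent bound \ref{item:artin-tangent} into the full package demanded by whichever concrete version of Artin's theorem one invokes — an adequate deformation and obstruction theory, finiteness of the associated modules, and openness of versality — and matching condition \ref{item:artin-algebraize} to Artin's effectivity axiom on the nose. In particular, reducing effectivity to the case of complete Noetherian local rings and then producing an actual finite type chart requires Artin approximation over the excellent base $W(\F)/p^n$, to pass from a formal object over a completion to one over an étale neighbourhood of a finite type point. None of this is conceptually new, but it is where essentially all of the work sits.
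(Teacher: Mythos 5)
Your overall skeleton (restrict to $W(\F)/p^n$-algebras, apply Artin's representability theorem to each truncation $X_n$, then glue the resulting algebraic spaces along the closed immersions $X_n \to X_{n+1}$ into a $p$-adic formal algebraic space) is consistent with what the paper intends; its one-line proof simply defers the whole package to \cite[Tags 07Y1 and 0CXU]{stacks}. The genuine gap in your write-up is the sentence claiming that ``from homogeneity, local finite presentation and these finiteness statements one obtains openness of versality.'' Openness of versality is an independent axiom in Artin's theorem: it does not follow from the sheaf property, limit preservation, the Rim--Schlessinger condition, finite-dimensional tangent spaces and effectivity over complete Noetherian local rings, and there is no ``standard way'' to extract an obstruction theory with the required base-change/constructibility properties from these inputs alone. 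As stated, the step where you invoke the classical criterion is therefore unjustified, and it is exactly the step where all the content of the theorem sits.

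Note that the hypotheses of the theorem are visibly engineered to repair this: condition $(2)$ is the \emph{strong} Rim--Schlessinger condition (arbitrary $R_2 \to R$, not just Artinian deformation situations), condition $(4)$ demands effectivity for arbitrary first countable admissible linearly topologized $p$-nilpotent algebras (i.e.\ for countable inverse systems of nilpotent thickenings, far beyond complete Noetherian local rings), and condition $(1)$ gives a closed-immersion diagonal. It is precisely this surplus of strength --- none of which your argument uses, since you only invoke $(2)$ in its classical form and $(4)$ for $\mathfrak{m}$-adically complete local rings --- that the Stacks project's ``strong formal effectiveness'' results convert into openness of versality, after which Artin's theorem over the G-ring $W(\F)/p^n$ applies and your levelwise-and-glue conclusion goes through. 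So to make the proposal correct you should either quote those results (as the paper does) or supply the nontrivial argument deducing openness of versality from the full-strength conditions $(1)$, $(2)$ and $(4)$; the remaining steps of your sketch (inheritance of the axioms by $X_n$, identification of finite type points with finite extensions of $\F$, and the colimit argument giving separatedness and local finite typeness over $\Spf(W(\F))$) are fine.
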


\begin{proof}
    This follows formally from \cite[Tags 07Y1 and 0CXU]{stacks}.
\end{proof}

\begin{proposition}[$\calM^r$ is representable] \label{prop:m-r-representable}
    The functor $\calM^r$ is representable by a $p$-adic formal algebraic space that is separated and locally of finite type over $\Spf(W(\F))$.
\end{proposition}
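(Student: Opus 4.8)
The plan is to apply Artin's criterion \Cref{thm:artin} to the functor $X=\calM^{r}$. That $\calM^{r}$ is an \'etale (in fact fpqc) sheaf is immediate: $\calM$ is one by Zink's Witt descent, and $\calM^{r}\hookrightarrow\calM$ is a closed subfunctor by \Cref{cor:locus-isogeny}. It then remains to verify conditions \eqref{item:artin-diagonal}--\eqref{item:artin-lofp}, of which the first four are a routine application of the deformation theory of displays together with the results of \Cref{sec:morphisms}, while \eqref{item:artin-lofp} is the technical heart. For the diagonal \eqref{item:artin-diagonal}, given two $R$-points $(\calP_{1},\rho_{1})$, $(\calP_{2},\rho_{2})$ of $\calM^{r}$ (with $\rho_{i}\colon\calP_{i}\dashrightarrow\widetilde{\calP}_{0,R}$, as after the definition of $\calM$), they agree precisely on the locus where the composite quasi-isogeny $g=\rho_{2}^{-1}\circ\rho_{1}\colon\calP_{1}\dashrightarrow\calP_{2}$ \emph{and} its inverse $g^{-1}$ are both morphisms of displays; by \Cref{cor:locus-isogeny} each of these loci is cut out by a finitely generated ideal, so $\Delta$ is a closed immersion and $\calM^{r}$ is separated. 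For the Mayer--Vietoris condition \eqref{item:artin-rs}, the functors $W$ and $W_{n}$ commute with the fibre product $R_{1}\times_{R}R_{2}$, and since $R_{1}\to R$ (hence also $R_{1}\times_{R}R_{2}\to R_{2}$) is a nilpotent surjection, the resulting square of Witt rings is a Milnor square, along which finite projective modules and their homomorphisms glue; thus displays and quasi-isogenies glue, and whether $p^{r}\rho$ is an isogeny can be read off from $\Spec(R_{2})$, which is homeomorphic to $\Spec(R_{1}\times_{R}R_{2})$, so that $\calM^{r}(R_{1}\times_{R}R_{2})\to\calM^{r}(R_{1})\times_{\calM^{r}(R)}\calM^{r}(R_{2})$ is bijective.

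For the tangent space \eqref{item:artin-tangent} at a point $x=(\calP,\rho)\in\calM^{r}(\F')$, since $\calM^{r}\hookrightarrow\calM$ is a closed immersion it suffices to bound $T_{x}\calM\supseteq T_{x}\calM^{r}$; along the square-zero thickening $\F'[\varepsilon]\to\F'$ the quasi-isogeny $\rho$ lifts uniquely by rigidity, so by \Cref{prop:deformation-theory} and \Cref{thm:deformation-theory} $T_{x}\calM$ is identified with the set of lifts of the Hodge filtration of $\calP$, which is a torsor under a finite-dimensional $\F'$-vector space. For the algebraization condition \eqref{item:artin-algebraize}, a compatible system of displays-with-quasi-isogeny over the discrete quotients of a first countable admissible $A$ is effective --- i.e.\ arises from a display-with-quasi-isogeny over $A$ --- by (a suitable form of) the effectivity of formal families of displays, and the condition that $p^{r}\rho$ be an isogeny descends to $A$ since it is a closed condition; this yields the required bijectivity.

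The main obstacle is \eqref{item:artin-lofp}, that $\calM^{r}$ commutes with filtered colimits; this is genuinely subtle because $W(-)$ does not. The strategy is to pass to truncated displays, which \emph{do} commute with filtered colimits by \Cref{lem: abgeschnittene Displays ueber Kolimes sind Kolimes}: writing $R=\colim_{i}R_{i}$, a point $(\calP,\rho)\in\calM^{r}(R)$ admits, for every $d$, a $d$-truncation $(\calP[d],\rho[d])$ descending to some $R_{i}$, and one must show that for $d$ large --- depending only on the discrete invariants $r$, the nilpotence order, and the height --- this truncated datum already reconstructs an honest point of $\calM^{r}$ over some $R_{j}$ with $j\geq i$. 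This is exactly where one invokes the fact that isogenies of sufficiently highly truncated displays extend, fpqc-locally on the base, to isogenies of non-truncated displays (the key technical lemma announced in the introduction), established first by an explicit Witt-vector matrix computation in characteristic $p$ and then in general via Grothendieck--Messing, together with the boundedness statements \Cref{prop:p-torsionfree-truncated} and \Cref{prop:divisible-by-p-truncated} and the deformation-theoretic lifting \Cref{cor: Corollary Def statement via truncated displays}; these promote $(\calP_{i}[d],\rho_{i}[d])$ to a display-with-quasi-isogeny over some $R_{j}$ whose base change to $R$ is $(\calP,\rho)$, and the same circle of ideas --- now using $p$-torsionfreeness, \Cref{prop:p-torsionfree} --- shows that morphisms, and hence isomorphisms, in $\calM^{r}$ likewise commute with the colimit. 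With all five conditions verified, \Cref{thm:artin} yields that $\calM^{r}$ is a $p$-adic formal algebraic space that is separated and locally of finite type over $\Spf(W(\F))$.
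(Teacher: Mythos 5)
Your proposal is correct and follows essentially the same route as the paper: Artin's criterion, with the diagonal handled by \Cref{cor:locus-isogeny}, the Mayer--Vietoris condition by gluing displays and iso-displays along the Milnor square (the paper cites Ferrand for exactly this), the tangent space by rigidity of quasi-isogenies plus the identification of deformations with lifts of the Hodge filtration, algebraization via \cite[Lemma 2.10]{LauFrames}, and the finite-presentation condition deferred to the truncated-display machinery of \Cref{lem: abgeschnittene Displays ueber Kolimes sind Kolimes}, \Cref{lem:extending-adjoint-matrix} and \Cref{cor:lifting-isomorphism}. The only cosmetic difference is that the paper phrases the compatibility of ``being an isogeny'' with the limit over open ideals as an \emph{open} condition on $\Spec(A)$ (finiteness of $\nu_p(\det)$), whereas you phrase it as a closed condition on $p^r\rho$ being a morphism of displays; both are valid since a morphism of displays that is a quasi-isogeny is automatically an isogeny.
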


\begin{proof}
    We check that the conditions from \Cref{thm:artin} are satisfied.

    $(\ref{item:artin-diagonal})$:
    Let $(\calP, \rho), (\calP', \rho') \in \calM(R)$.
    Then the locus in $\Spec(R)$ where these two points agree is precisely the locus where $\rho'^{-1} \circ \rho$ and $\rho^{-1} \circ \rho'$ both are isogenies.
    Thus by \Cref{cor:locus-isogeny} it is cut out by a finitely generated ideal.
    This shows that the diagonal $\calM \to \calM \times \calM$ is a (finitely presented) closed immersion.

    $(\ref{item:artin-rs})$:
    Let $R$, $R_1$, $R_2$ be as in the theorem.
    By Ferrand gluing \cite[Théorème 2.2]{Ferrand} we have equivalences of categories
	\[
		\Disp \roundbr[\big]{R_{1} \times_{R} R_{2}} \to \Disp(R_{1}) \times_{\Disp(R)} \Disp(R_{2})
    \]
    and
    \[
        \Iso \roundbr[\big]{R_{1} \times_{R} R_{2}} \to \Iso(R_{1}) \times_{\Iso(R)} \Iso(R_{2}) \, .
	\]
    From this it then formally follows that $\calM^r(R_1 \times_R R_2) \to \calM^r(R_1) \times_{\calM^r(R)} \calM^r(R_2)$ is a bijection, in fact that the same statement is true for $\calM$.

    $(\ref{item:artin-tangent})$:
    Let $x = (\calP, \rho) \in \calM(R)$ and write $(M, \Fil(M))$ its underlying filtered finite projective $R$-module.
    Then by \cite[Equation 83]{ZinkDisplay} and the unique liftability of quasi-isogenies we have a natural isomorphism
    \[
        \Hom_R(\Fil(M), M/\Fil(M)) \to T_x \calM
    \]
    so that $T_x \calM$ is a finite projective $R$-module.

    $(\ref{item:artin-algebraize})$:
    Suppose we are given an extension problem as in the theorem.
    By \cite[Lemma 2.10]{LauFrames} we have an equivalence of categories
    \[
        \Disp(A) \to \lim_{\text{$I \subseteq A$ open}} \Disp(A/I) \, .
    \]
    To deduce that $\calM^r(A) \to \calM^r(\Spf(A))$ is bijective we now still need to check that a morphism $f \colon \calP \to \calP'$ of displays over $A$ is an isogeny if and only if the induced morphism $f_{A/I} \colon \calP_{A/I} \to \calP'_{A/I}$ is an isogeny for all open ideals $I \subseteq A$.
    This follows because the locus where $f$ is an isogeny is representable by an open subscheme of $\Spec(A)$.

    \smallskip

    It now only remains to see that condition $(\ref{item:artin-lofp})$ is satisfied.
    This occupies the rest of this section and is finally proven in \Cref{prop:lofp} below.
\end{proof}

\begin{lemma} \label{lem:lofp-on-objects}
    Let $X \colon \Nilp_{W(\F)} \to \Set$ be an fpqc-sheaf that satisfies the following conditions:
    \begin{enumerate}
        \item \label{item:lofp-on-objects-diagonal}
        The diagonal $\Delta \colon X \to X \times X$ is locally of finite presentation.

        \item \label{item:lofp-on-objects}
        Let $R = \colim_{i \in I} R_i$ be a filtered colimit of $p$-nilpotent $W(\F)$-algebras.
        Then for every point $x \in X(R)$ there exists an index $i \in I$ and a faithfully flat ring homomorphism $R_i \to S_i$ such that
        \[
            x_{S_i \otimes_{R_i} R} \in \im \roundbr[\Big]{X(S_i) \to X \roundbr[\big]{S_i \otimes_{R_i} R}} \, .
        \]
    \end{enumerate}
    Then $X$ is locally of finite presentation.
\end{lemma}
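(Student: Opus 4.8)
The plan is to verify directly that for every filtered colimit $R = \colim_{i \in I} R_i$ of $p$-nilpotent $W(\F)$-algebras the canonical map $\colim_i X(R_i) \to X(R)$ is bijective, treating injectivity and surjectivity separately.

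For injectivity, I would take $x, y \in X(R_i)$ with the same image in $X(R)$ and consider the subfunctor $E_{x, y} \subseteq h_{R_i}$ cut out by the locus where $x = y$, that is, the fibre product of $(x, y) \colon h_{R_i} \to X \times X$ with the diagonal $\Delta \colon X \to X \times X$. Condition $(\ref{item:lofp-on-objects-diagonal})$ says exactly that $E_{x, y}$ is locally of finite presentation over $R_i$, hence commutes with filtered colimits of $R_i$-algebras; since $E_{x, y}(R) \neq \varnothing$ this gives $E_{x, y}(R_j) \neq \varnothing$, i.e.\ $x_{R_j} = y_{R_j}$, for some $j \geq i$. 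This sub-argument applies verbatim to an arbitrary filtered system of $R_i$-algebras, a point that will be used again below.

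For surjectivity, fix $x \in X(R)$. Condition $(\ref{item:lofp-on-objects})$ provides an index $i$, a faithfully flat homomorphism $R_i \to S_i$ and a section $\xi \in X(S_i)$ whose image in $X(S)$ equals $x_S$, where I abbreviate $S \coloneqq S_i \otimes_{R_i} R$ and $S_j \coloneqq S_i \otimes_{R_i} R_j$ for $j \geq i$, so that $S = \colim_{j \geq i} S_j$ (and all these rings are again $p$-nilpotent $W(\F)$-algebras). Writing $T \coloneqq S_i \otimes_{R_i} S_i$ and letting $p_1^* \xi, p_2^* \xi \in X(T)$ denote the two pullbacks of $\xi$, one checks — using $\xi_S = x_S$ and that $x$ is already defined over $R$ — that $p_1^* \xi$ and $p_2^* \xi$ agree after base change along $T \to T \otimes_{R_i} R$. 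Since $T \otimes_{R_i} R = \colim_{j \geq i} (T \otimes_{R_i} R_j)$, the injectivity statement proved above yields some $j \geq i$ with $(p_1^* \xi)_{T \otimes_{R_i} R_j} = (p_2^* \xi)_{T \otimes_{R_i} R_j}$. Under the canonical isomorphism $T \otimes_{R_i} R_j \simeq S_j \otimes_{R_j} S_j$ this means precisely that $\xi_{S_j} \in X(S_j)$ carries a descent datum for the faithfully flat cover $R_j \to S_j$; as $X$ is an fpqc-sheaf (and the cocycle condition for a sheaf of sets is automatic) this datum is effective, so $\xi_{S_j}$ descends to some $\eta \in X(R_j)$ with $\eta_{S_j} = \xi_{S_j}$. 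Finally, since $S_j \otimes_{R_j} R = S$, the two sections $\eta_R$ and $x$ of $X(R)$ both map to $\xi_S = x_S$ in $X(S)$, and $R \to S$ is faithfully flat, so the injectivity of $X(R) \to X(S)$ (again the fpqc-sheaf property) forces $\eta_R = x$. Hence $x$ lies in the image of $X(R_j) \to X(R)$.

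The main obstacle is the single genuinely non-formal point: condition $(\ref{item:lofp-on-objects})$ only supplies a faithfully flat cover over the one ring $R_i$, so the triviality of the descent datum it produces must be propagated from the colimit $T \otimes_{R_i} R$ back to a finite stage $T \otimes_{R_i} R_j$ — which is exactly where condition $(\ref{item:lofp-on-objects-diagonal})$ re-enters, through the injectivity statement. The remaining steps are routine bookkeeping with base-changed tensor products and the sheaf axioms.
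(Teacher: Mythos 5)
Your proof is correct and is essentially the argument the paper invokes: the paper simply cites \cite[Lemma 2.1.6]{EmertonGeeImages} and notes that its proof (use condition (2) to produce a section over a faithfully flat cover at a finite stage, use the limit-preserving diagonal to spread out the agreement of the two pullbacks, then descend by the sheaf property) adapts to this setting, and your write-up is exactly that adaptation for fpqc sheaves of sets.
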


\begin{proof}
    The proof given in \cite[Lemma 2.1.6]{EmertonGeeImages}, which concerns stacks for the étale topology, adapts to our situation.
\end{proof}

\begin{lemma} \label{lem: covering rings by semiperfect rings}
	Let $R$ be a ring.
	There exists a faithfully flat ring homomorphism $R\rightarrow S$, such that $S/p$ is semiperfect.
\end{lemma}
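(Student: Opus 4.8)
The plan is to realize $S$ as an explicit faithfully flat base change of $R$ obtained from a polynomial presentation. Recall that an $\Fp$-algebra is semiperfect exactly when its Frobenius endomorphism is surjective, and that surjectivity of Frobenius passes to quotient rings; so it is enough to arrange that $S/p$ becomes a quotient of a ring in which every element is visibly a $p$-th power.

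First I would choose a surjection $P \coloneqq \ZZ[X_i : i \in I] \twoheadrightarrow R$ from a polynomial ring over $\ZZ$ — for instance with $I = R$ and $X_r \mapsto r$ — and then form
\[
    P_{\infty} \coloneqq \colim_{n \geq 0} \ZZ[X_i^{1/p^n} : i \in I],
\]
where the transition maps send $X_i^{1/p^n} \mapsto (X_i^{1/p^{n + 1}})^{p}$. Each term is again a polynomial ring over $\ZZ$, hence free and in particular flat over the polynomial subring $P$; since a filtered colimit of flat modules is flat, $P_{\infty}$ is flat over $P$, and it is even faithfully flat because $P_{\infty} \otimes_{P} \kappa(\mathfrak{p}) = \kappa(\mathfrak{p})[X_i^{1/p^{\infty}}] \neq 0$ for every prime $\mathfrak{p} \subseteq P$. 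I would then set $S \coloneqq R \otimes_{P} P_{\infty}$, which is faithfully flat over $R$ as the base change of a faithfully flat map.

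It then remains to verify that $S/pS$ is semiperfect. Writing $\overline{P}_{\infty} \coloneqq P_{\infty}/p = \colim_{n} \Fp[X_i^{1/p^n}]$, the fact that $R$ is a $P$-algebra gives
\[
    S/pS = (R/p) \otimes_{\Fp[X_i]} \overline{P}_{\infty},
\]
and since $R/p$ is a quotient of $\Fp[X_i]$, the ring $S/pS$ is a quotient of $\overline{P}_{\infty}$. But in $\overline{P}_{\infty}$ every element is a $p$-th power: a finite sum $\sum_{\alpha} c_{\alpha} X^{\alpha}$ equals $(\sum_{\alpha} c_{\alpha} X^{\alpha/p})^{p}$ by the identity $(y + z)^{p} = y^{p} + z^{p}$ in characteristic $p$ together with $c^{p} = c$ for $c \in \Fp$, the key point being that $\alpha/p$ again has only $p$-power denominators. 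Hence Frobenius is surjective on $\overline{P}_{\infty}$, therefore also on its quotient $S/pS$, so $S/pS$ is semiperfect and $R \to S$ is as desired.

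I do not expect a genuine obstacle here, since the whole argument is formal; the only places calling for a little care are the flatness of $P_{\infty}$ over $P$ (which is what makes $R \to S$ faithfully flat) and the small amount of bookkeeping needed to identify $S/pS$ with a quotient of $\overline{P}_{\infty}$.
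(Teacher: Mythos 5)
Your proposal is correct and follows essentially the same route as the paper: present $R$ as a quotient of a polynomial ring $P = \ZZ[X_i]$, base change along the faithfully flat map $P \to P^{1/p^{\infty}}$, and observe that $S/p$ is then a quotient of the semiperfect ring $P^{1/p^{\infty}}/p$. The extra details you supply (freeness of each finite stage over $P$, nonvanishing of fibers, surjectivity of Frobenius passing to quotients) are exactly the routine verifications the paper leaves implicit.
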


\begin{proof}
	Choose a surjection $P = \mathbb{Z} \polring{X_i}{i \in I} \rightarrow R$ for some set $I$.
	Now observe that
    \[
        P \rightarrow P^{1/p^{\infty}} \coloneqq \mathbb{Z} \polring[\big]{X_i^{1/p^{\infty}}}{i \in I}
    \]
    is faithfully flat and that $P^{1/p^{\infty}}/p$ is perfect.
	Thus $R \rightarrow S \coloneqq R \otimes_P P^{1/p^{\infty}}$ is faithfully flat and $S/p$ is semiperfect, as desired.
\end{proof}

Note that for a semiperfect $\Fp$-algebra $R$ the Witt vector Frobenius is surjective on $W(R)$ and consequently $I_R = p W(R)$.
From this it follows that $W(R)/p^n \simeq W_n(R)$.
This will be implicitly used in the proof of the following lemma.

\begin{lemma} \label{lem:extending-adjoint-matrix}
    Let $R$ be a $W(\F)/p^a$-algebra and fix non-negative integers $m$ and $e \geq e_0$.
    There exists a constant $\const = \const(a, m, e)$, depending only on $a$, $m$ and $e$, with the following property:

    Let $n > m$ and let $\calP_{n + \const}$ be an $(n + \const)$-truncated display, nilpotent of nilpotency order less or equal than $e$, and let
    \[
        g_{n + \const} \colon \calP_{n + \const} \to \widetilde{\calP}_{0, R}[n + \const]
    \]
    be an isogeny of $(n + \const)$-truncated displays of height $m$.
    Then there exist the following data.
    \begin{itemize}
        \item
        A faithfully flat ring homomorphism $R \to S$.

        \item
        A display $\calP$ over $S$ and an isogeny of displays $g \colon \calP \to \widetilde{\calP}_{0, S}$ of height $m$.

        \item
        An isomorphism $f_n \colon \calP_{n + \const, S}[n] \to \calP[n]$ of $n$-truncated displays over $S$ that makes the diagram
        \[
        \begin{tikzcd}[column sep = large]
            \calP_{n + \const, S}[n] \ar[r, "{g_{n + \const}[n]}"] \ar[d, "{f_n}"]
            & \widetilde{\calP}_{0, S}[n] \ar[d, equal]
            \\
            \calP[n] \ar[r, "{g[n]}"]
            & \widetilde{\calP}_{0, S}[n]
        \end{tikzcd}
        \]
        commutative.
    \end{itemize}
\end{lemma}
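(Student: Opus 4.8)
\emph{Reduction to characteristic $p$.}
The plan is to reduce to the case where $R$ is an $\Fp$-algebra and then to treat that case by hand. To this end, first choose by \Cref{lem: covering rings by semiperfect rings} a faithfully flat extension $R \to S$ with $S/p$ semiperfect; this $S$ will be the cover demanded in the statement, and we base change everything to it. Since $p^{a} S = 0$, we have a chain of square-zero pd-thickenings
\[
	S = S/p^{a} \to S/p^{a-1} \to \dotsb \to S/p,
\]
each with kernel annihilated by $p$ and carrying the trivial pd-structure. Granting the statement for the semiperfect $\Fp$-algebra $S/p$ with some constant $\const_{0} = \const_{0}(m,e)$, I would propagate the resulting display-with-isogeny up this chain one step at a time using \Cref{cor: Corollary Def statement via truncated displays}, applied with $r = 1$ and nilpotency bound $e$ and fed with (a suitable truncation of) the prescribed datum $\calP_{n+\const, S/p^{i}}$ as the compatible truncated lift; each step costs a fixed shift $\const(1,e)$ in the truncation level required. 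The intermediate displays agree with $\calP_{n+\const}$ at a level below $n$, hence remain nilpotent of order $\le e$ like $\widetilde\calP_{0}$, and an isogeny of height $m$ is already detected at level $m+1 \le n$, so this property persists along the chain. This gives the dependence $\const(a,m,e) = \const_{0}(m,e) + (a-1)\,\const(1,e)$ and leaves us with a semiperfect $\Fp$-algebra $R$.

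\emph{The semiperfect case.}
Now $W(R)$ is $p$-adically complete with $W(R)/p^{n} \simeq W_{n}(R)$, $I_{R} = pW(R)$, and $F$ surjective on $W(R)$. After choosing normal decompositions and bases, represent $\widetilde\calP_{0,R}$ by $U_{0} \in \GL_{h}(W(R))$ and $\calP_{n+\const}$ by some $U \in \GL_{h}(W_{n+\const}(R))$, and lift $U$ to $\widehat{U} \in \GL_{h}(W(R))$ using the smoothness of $\GL_{h}$ and the $p$-completeness of $W(R)$. Since the nilpotency condition only involves the reduction modulo $p$, the matrix $\widehat{U}$ defines a display $\calP$ over $R$, nilpotent of order $\le e$, together with an isomorphism $f_{n} \colon \calP_{n+\const}[n] \to \calP[n]$. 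Lifting the matrix representing $g_{n+\const}$ to a matrix $\widehat{M}$ of the appropriate block shape over $W(R)$ yields a morphism of underlying finite projective $(W(R), I_{R})$-modules $\calP \to \widetilde\calP_{0,R}$ for which the window equation $\widehat{M} \cdot \widehat{U} = U_{0} \cdot \Phi(\widehat{M})$ holds only modulo a high power of $p$ (essentially $p^{n+\const}$). It remains to correct $\widehat{M}$ by a matrix divisible by $p^{n}$ so that this equation holds identically; then $\widehat{M}$ defines an honest isogeny $g \colon \calP \to \widetilde\calP_{0,R}$, with $g[n] = g_{n+\const}[n]$ under $f_{n}$ and of height $m$ because $\det(\widehat{M}) \equiv p^{m} u \pmod{p^{n}}$ for a unit $u$ with $n > m$ (\Cref{rmk:dividing-det}); this makes the required square commute.

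\emph{The main obstacle.}
Producing the correction is the heart of the matter. Expanding the window equation block by block, and using that $g_{n+\const}$ is a morphism of displays in order to verify the relevant compatibility hypothesis, one is reduced to a system of exactly the shape of \Cref{lem:homogeneous-equation} and \Cref{lem:inhomogeneous-equation}: the nilpotency of $\calP$ of order $\le e$ forces the homogeneous part to have only the trivial solution and collapses the inhomogeneous part to a single equation, while the surjectivity of $F$ on $W(R)$ is precisely what allows writing the relevant matrices in the form $F^{e-1}(-)$ without a further cover --- in the proof of \Cref{prop:p-torsionfree} this step required an fpqc cover, and it is also the place where possible $p$-torsion in $W(R)$ over non-reduced $R$ would obstruct a naive approach, which is the reason we pass to a semiperfect cover. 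Finally, the hypothesis that $g_{n+\const}$ is an isogeny of height $m$ keeps the finitely many obstruction classes governing solvability of this equation within a range that can be cleared once $\const$ is large in terms of $m$ and $e$, possibly after replacing $\calP$ by an isomorphic display at level $n$ --- which is absorbed into $f_{n}$. This boundedness statement is the display analogue of the homomorphism version of Vasiu's crystalline boundedness principle, and it is the step I expect to require the real work; the rest is the deformation theory of \Cref{thm:deformation-theory} and \Cref{cor: Corollary Def statement via truncated displays} together with bookkeeping with normal decompositions.
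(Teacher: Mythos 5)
Your global structure matches the paper's: reduce to a semiperfect $\Fp$-algebra by a faithfully flat cover (\Cref{lem: covering rings by semiperfect rings}) combined with the deformation theory of \Cref{cor: Corollary Def statement via truncated displays} (the paper does this in a single step via the canonical pd-structure on $pS$ with $r = a-1$, rather than your chain of square-zero thickenings, but both reductions are fine). The genuine gap is in the semiperfect case, which is exactly the step you flag as ``the real work'' and then do not carry out. Your plan is to fix an arbitrary lift $\widehat U \in \GL_h(W(R))$ of the display matrix and then correct the lifted morphism matrix $\widehat M$ by something divisible by $p^n$. Writing $M' = \widehat M + p^n N$, the window equation for $M'$ becomes a $\sigma$-linear difference equation of the form $N \cdot \widehat U - \widetilde U^0 \cdot \Phi(N) = -p^{\const} X$ in the unknown $N$; this is not of the shape $V(X) = F^{e-1}(B)\, X\, F^{e-1}(C)$ treated by \Cref{lem:homogeneous-equation} and \Cref{lem:inhomogeneous-equation} (those arise from divisibility of a given morphism by $p$, not from lifting a structure equation), and your appeal to ``finitely many obstruction classes \dots cleared once $\const$ is large'' is a heuristic, not an argument.

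The paper closes this step with a short trick that your proposal misses and that gives the lemma its name: keep the (arbitrary) lift $M$ of the morphism matrix fixed and instead correct the lift of the \emph{display} matrix. Since $g_{n+\const}$ has height $m$, one has $\det(M) = p^m \varepsilon$ for a unit $\varepsilon \in W(R)^{\times}$, hence $M \cdot M^{\mathrm{ad}} = p^m \varepsilon \cdot 1$ for the adjugate matrix $M^{\mathrm{ad}}$. Writing the defect of the structure equation as $M \cdot U^{\mathrm{lift}} + p^{n+m} X = \widetilde U^0 \cdot \Phi(M)$ and setting $U := U^{\mathrm{lift}} + p^n \varepsilon^{-1} M^{\mathrm{ad}} X$ makes the equation hold on the nose, with $U[n]$ and $M[n]$ unchanged; no $\sigma$-linear equation has to be solved, and $\const = m$ already suffices in the semiperfect case. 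Without this device, or a genuine substitute for it, your argument does not go through.
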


\begin{proof}
    Note that the condition that $g$ is an isogeny of height $m$ is superfluous; it automatically follows from the third condition and the assumption $n > m$.
    Moreover we can and do always assume without loss of generality that $\calP_{n + \const}$ has a free normal decomposition.

    Let us first assume that $R$ is a semiperfect $\F$-algebra, set $\const \coloneqq m$ and suppose that we are given $\calP_{n + m}$ and $g_{n + m}$ as above.
    We can represent $\calP_{n + \const}$, $\widetilde{\calP}_0$ and $g_{n + \const}$ by matrices
    \[
        U_{n + m} \in \GL_h \roundbr[\big]{W_{n + m}(R)}, \quad
        \widetilde{U}^0 \in \GL_h \roundbr[\big]{W(W(\F))}
        \quad \text{and} \quad
        M_{n + m} \in \Mat_h \roundbr[\big]{W_{n + m}(R)}_{\mu}
    \]
    satisfying the structure equation $M_{n + m} \cdot U_{n + m} = \widetilde{U}^0 \cdot \Phi(M_{n + m}) \in \Mat_h(W_{n + m}(R))$.
    Our goal is now to find matrices $U \in \GL_h \roundbr[\big]{W(R)}$ and $M \in \Mat_h \roundbr[\big]{W(R)}_{\mu}$ such that $U[n] = U_{n + m}[n]$, $M[n] = M_{n + m}[n]$ and
    \[ \label{eq:structure-equation}
        M \cdot U = \widetilde{U}^0 \cdot \Phi(M) \in \Mat_h \roundbr[\big]{W(R)} \, . \tag{$\dagger$}
    \]
    For this we start by choosing arbitrary lifts
    \[
        \text{$U^{\approxi} \in \GL_h \roundbr[\big]{W(R)}$ of $U_{n + m}$}
        \qquad \text{and} \qquad
        \text{$M \in \Mat_h \roundbr[\big]{W(R)}_{\mu}$ of $M_{n + m}$} \, .
    \]
    From our assumption that $g_{n + m}$ is an isogeny of height $m$ it follows that we can write
    \[
        \det(M) = p^m \varepsilon
    \]
    for some unit $\varepsilon \in W(R)^{\times}$, see \Cref{rmk:dividing-det}.
    The idea is now to find a suitable modification $U$ of $U^{\approxi}$ that solves the equation $(\text{\ref{eq:structure-equation}})$.
    We find a matrix $X \in \Mat_{h}(W(R))$ such that
	\[
		M \cdot U^{\approxi} + p^{n + m} \cdot X = \widetilde{U}^0 \cdot \Phi(M) \, .
	\]
	Let $M^{\ad} \in \Mat_{h}(W(R))$ be the adjoint matrix of $M$; it has the property that $M \cdot M^{\ad} = p^m \varepsilon \cdot \mathmybb{1}_h$.
	Now set
	\[
		U \coloneqq U^{\approxi} + p^n \varepsilon^{-1} \cdot M^{\ad} \cdot X \in \GL_{h} \roundbr[\big]{W(R)} \, .
	\]
    Then we have
    \begin{align*}
		M \cdot U &= M \cdot U^{\approxi} + p^n \varepsilon^{-1} \cdot M \cdot M^{\ad} \cdot X \\
	  	&= M \cdot U^{\approxi} + p^{n + m} \cdot X \\
	  	&= \widetilde{U}^0 \cdot \Phi(M)
	\end{align*}
    as desired.

    Now let $R$ be arbitrary, set $\const \coloneqq m + \const_1$ where $\const_1 = \const_1(a - 1, e)$ is the constant from \Cref{cor: Corollary Def statement via truncated displays} and suppose again that we are given $\calP_{n + \const}$ and $g_{n + \const}$ as above.
    Choose a faithfully flat ring homomorphism $R \to S$ such that $S/p$ is semiperfect, see \Cref{lem: covering rings by semiperfect rings}, and equip $pS \subseteq S$ with its canonical pd-structure so that $S \to S/p$ becomes a pd-thickening.

    Combining the first part of the proof with \Cref{cor: Corollary Def statement via truncated displays} we then obtain data $\calP$, $g$ and $f_n$ satisfying the conditions above.
\end{proof}

\begin{lemma} \label{cor:lifting-isomorphism}
    Let $R$ be a $W(\F)/p^a$-algebra and consider a lifting problem
    \[ \label{eq:lifting-isomorphism}
    \begin{tikzcd}
        \calP \ar[r, "g"] \ar[d, dashed]
        & \widetilde{\calP}_{0, R} \ar[d, equal]
        \\
        \calP' \ar[r, "g'"]
        & \widetilde{\calP}_{0, R}
    \end{tikzcd} \tag{$\ast$}
    \]
    where $\calP$ and $\calP'$ are displays over $R$ that are nilpotent of nilpotency order less or equal than some $e \geq e_0$ and $g$ and $g'$ are isogenies of the same height $m$.
    Then there exists a non-negative integer $n = n(a, e, c, m)$, depending only on $a$, $e$, $m$ and the codimension $c$ of $\calP_0$, such that the following conditions are equivalent:
    \begin{enumerate}
        \item
        The lifting problem $(\text{\ref{eq:lifting-isomorphism}})$ has a solution $f \colon \calP \to \calP'$.

        \item
        The truncated lifting problem $(\text{\ref{eq:lifting-isomorphism}})[n]$ has a solution $f_n \colon \calP[n] \to \calP'[n]$.
    \end{enumerate}
    When these conditions are satisfied then the solution $f$ is unique and an isomorphism.
\end{lemma}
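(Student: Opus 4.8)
The plan is to dispose of the easy assertions first and then concentrate all effort on the implication $(2) \Rightarrow (1)$. The direction $(1) \Rightarrow (2)$ is immediate by truncation. Uniqueness of a solution $f$ will follow because, given two solutions $f, f'$, one has $g' \circ (f - f') = 0$, and $\Hom_R(\calP, \calP')$ is $p$-torsionfree by \Cref{prop:p-torsionfree}, hence embeds into $\Hom_R(\calP[1/p], \calP'[1/p])$, on which postcomposition with the isomorphism $g'[1/p]$ is injective. That any solution is an isomorphism will follow from $\det(g') \circ \det(f) = \det(g)$ together with \Cref{rmk:dividing-det}: both $\det(g)$ and $\det(g')$ equal $p^m$ times a unit, so $\det(f)$ is a unit, $f$ is an isogeny of height $0$, and \Cref{cor:inverting-isogenies} then exhibits an inverse. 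So everything reduces to producing a solution from a truncated one.

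For $(2) \Rightarrow (1)$ the key point is that the only possible solution is the quasi-isogeny $f \coloneqq (g')^{-1} \circ g \colon \calP \dashrightarrow \calP'$, and one must show it is an honest morphism of displays. I would introduce the honest isogeny $h \coloneqq p^m \cdot (g')^{-1} \colon \widetilde{\calP}_{0, R} \to \calP'$, which exists by \Cref{cor:inverting-isogenies} and satisfies $g' \circ h = p^m \cdot \id$ and $h \circ g' = p^m \cdot \id$ (these identities are checked on iso-displays and then hold on displays by $p$-torsionfreeness). Then $h \circ g \colon \calP \to \calP'$ is an honest morphism of displays equal to $p^m \cdot f$ in $\Hom_R(\calP[1/p], \calP'[1/p])$; since $\Hom_R(\calP, \calP')$ is $p$-torsionfree, $f$ is a morphism of displays precisely when $h \circ g$ is divisible by $p^m$ there, and then automatically $g' \circ f = p^{-m}(g' \circ h \circ g) = g$, so $f$ solves $(\ast)$.

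Thus everything comes down to detecting divisibility by $p^m$ of a morphism of nilpotent displays on a high enough truncation. I would prove, by induction on $m$, that there is a constant $n = n(a, e, c, m)$ such that a morphism $\varphi$ of nilpotent displays over $R$ of order $\leq e$ and codimension $c$ is divisible by $p^m$ if and only if $\varphi[n]$ is divisible by $p^m$ in $\Disp_n(R)$. The case $m = 0$ is trivial. For the inductive step, choose $n$ beyond the constant of \Cref{prop:divisible-by-p-truncated}, beyond the constant $\const(a,e,c)$ of \Cref{prop:p-torsionfree-truncated}, and beyond whatever the induction hypothesis for $m-1$ requires: from $\varphi[n] = p^m \psi_n$ one gets that $\varphi$ is divisible by $p$ via \Cref{prop:divisible-by-p-truncated}, writes $\varphi = p\psi$, notes that $\psi[n] - p^{m-1}\psi_n$ is killed by $p$, deduces via \Cref{prop:p-torsionfree-truncated} that $\psi[n-\const]$ is divisible by $p^{m-1}$, and concludes by the induction hypothesis that $\psi$, hence $\varphi$, is divisible by $p^m$.

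Finally I would feed this back in: applying the claim to $\varphi = h \circ g$ and using the hypothesis $g'[n] \circ f_n = g[n]$ gives $(h \circ g)[n] = h[n] \circ g'[n] \circ f_n = p^m \cdot f_n$, which is visibly divisible by $p^m$ in $\Disp_n(R)$; hence $h \circ g$ is divisible by $p^m$ in $\Hom_R(\calP, \calP')$, and $f = p^{-m}(h \circ g)$ is the desired solution, which by the first paragraph is unique and an isomorphism. I expect the main obstacle to be the inductive divisibility claim in the third paragraph: no single step is hard, but one must track carefully how the truncation level degrades under each application of \Cref{prop:divisible-by-p-truncated} and \Cref{prop:p-torsionfree-truncated} and check that the resulting $n$ still depends only on $a$, $e$, $c$ and $m$.
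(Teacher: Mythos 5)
Your proof is correct and follows essentially the same route as the paper: reduce everything to the single quasi-isogeny $(g')^{-1}\circ g$, multiply by $p^m$ to obtain an honest isogeny via \Cref{cor:inverting-isogenies}, and detect divisibility by $p^m$ on a sufficiently deep truncation using \Cref{prop:divisible-by-p-truncated}. The only difference is that you make explicit the induction on $m$ (interleaving \Cref{prop:divisible-by-p-truncated} with \Cref{prop:p-torsionfree-truncated} to control the loss of truncation level), which the paper's appeal to \Cref{prop:divisible-by-p-truncated} leaves implicit, and you also spell out the uniqueness and isomorphism assertions.
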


\begin{proof}
    \enquote{$(1) \Longrightarrow (2)$}:
    This is clear.

    \enquote{$(2) \Longrightarrow (1)$}:
    By \Cref{cor:inverting-isogenies} we have the isogeny
    \[
        \adjust{f} \coloneqq \roundbr[\big]{p^m \cdot g''^{-1}} \circ g' \colon \calP' \to \calP''.
    \]
    Now condition $(1)$ is equivalent to requiring $\adjust{f}$ to be divisible by $p^m$ and similarly condition $(2)$ implies that $\adjust{f}[n]$ is divisible by $p^m$.
    Thus we can conclude by \Cref{prop:divisible-by-p-truncated}.
\end{proof}

\begin{proposition} \label{prop:lofp}
    $\calM^r$ is locally of finite presentation.
\end{proposition}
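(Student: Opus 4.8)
The plan is to apply \Cref{lem:lofp-on-objects} to $X = \calM^r$. Condition $(\ref{item:lofp-on-objects-diagonal})$ holds because, as in the proof of \Cref{prop:m-r-representable}, the diagonal of $\calM$ is a finitely presented closed immersion, hence so is that of the finitely presented closed subfunctor $\calM^r \subseteq \calM$ and of each summand $\calM^{r,s}$; moreover $\calM^r$ is an fpqc-sheaf, being closed in the fpqc-sheaf $\calM$. Since $\calM^r = \bigsqcup_{s \geq 0} \calM^{r,s}$ and a finite orthogonal family of idempotents summing to $1$ descends along a filtered colimit, it suffices to verify condition $(\ref{item:lofp-on-objects})$ of \Cref{lem:lofp-on-objects} for each $\calM^{r,s}$ separately.

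So fix $s$, a filtered colimit $R = \colim_{i \in I} R_i$ of $p$-nilpotent $W(\F)$-algebras, and a point $x = (\calP, \rho) \in \calM^{r,s}(R)$; write $g \coloneqq p^r \rho \colon \calP \to \widetilde{\calP}_{0, R}$, an isogeny of displays of height $s$. As $R$ is $p$-nilpotent we have $p^a \cdot 1 = 0$ in $R$ for some $a$, hence already in some $R_{i_0}$, and after passing to a cofinal subset we may assume every $R_i$ is a $W(\F)/p^a$-algebra. Let $e \geq e_0$ bound the nilpotency order of $\calP$, let $\const = \const(a, s, e)$ be the constant from \Cref{lem:extending-adjoint-matrix}, and choose $n$ larger than $s$, larger than $e$, and larger than the constant $n(a, e, c, s)$ from \Cref{cor:lifting-isomorphism}, where $c$ is the codimension of $\calP_0$; set $N \coloneqq n + \const$.

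The crucial step is to descend the truncated datum. By \Cref{lem: abgeschnittene Displays ueber Kolimes sind Kolimes}, after increasing $i$ there are an $N$-truncated display $\calP_{i,N}$ over $R_i$ and a morphism $g_{i,N} \colon \calP_{i,N} \to \widetilde{\calP}_{0, R_i}[N]$ whose base change along $R_i \to R$ is $(\calP[N], g[N])$; since the nilpotency condition and the identity $\det(g) = p^s \cdot \varepsilon$ for an isomorphism $\varepsilon$ (see \Cref{rmk:dividing-det}) are detected at finite level and their formation commutes with filtered colimits, after a further increase of $i$ we may assume that $\calP_{i,N}$ is nilpotent of order $\leq e$ and that $g_{i,N}$ is an isogeny of height $s$. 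Applying \Cref{lem:extending-adjoint-matrix} over $R_i$ to $(\calP_{i,N}, g_{i,N})$ yields a faithfully flat ring homomorphism $R_i \to S_i$, a display $\calP'$ over $S_i$, an isogeny of displays $g' \colon \calP' \to \widetilde{\calP}_{0, S_i}$ of height $s$, and an isomorphism $f_n \colon \calP_{i,N,S_i}[n] \to \calP'[n]$ of $n$-truncated displays compatible with $g_{i,N}[n]$ and $g'[n]$; since $n > e$, this forces $\calP'$ to be nilpotent of order $\leq e$ as well. Putting $\rho' \coloneqq p^{-r} g' \colon \calP' \dashrightarrow \widetilde{\calP}_{0, S_i}$ we obtain a point $y \coloneqq (\calP', \rho') \in \calM^{r,s}(S_i)$.

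Finally we check that $x$ and $y$ agree over $S \coloneqq S_i \otimes_{R_i} R$. Base-changing to $S$ we have displays $\calP_S$ and $\calP'_S$, both nilpotent of order $\leq e$, together with the isogenies $g_S = p^r \rho_S$ and $g'_S$ of height $s$ to $\widetilde{\calP}_{0, S}$; moreover $f_{n, S}$ is an isomorphism $\calP_S[n] \to \calP'_S[n]$ solving the truncated version of the lifting problem between $g_S$ and $g'_S$ (here we use the base-change identities $\calP_{i,N,S}[n] = \calP_S[n]$ and $g_{i,N,S}[n] = g_S[n]$). By the choice of $n$, \Cref{cor:lifting-isomorphism} upgrades this to an isomorphism $\calP_S \to \calP'_S$ compatible with $g_S$ and $g'_S$, hence with $\rho_S$ and $\rho'_S$, so that $x_S = y_S$ in $\calM^{r,s}(S)$. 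Thus $x_S \in \im\roundbr[\big]{\calM^{r,s}(S_i) \to \calM^{r,s}(S)}$, which is condition $(\ref{item:lofp-on-objects})$, and \Cref{lem:lofp-on-objects} then shows that $\calM^{r,s}$, and therefore $\calM^r$, is locally of finite presentation. The genuine mathematical content has already been packaged into \Cref{lem:extending-adjoint-matrix} and \Cref{cor:lifting-isomorphism}; the only points requiring care here are that the faithfully flat base change produced by \Cref{lem:extending-adjoint-matrix} is exactly the flexibility \Cref{lem:lofp-on-objects} permits, and that $N$ is taken large enough to feed both that lemma and the rigidity statement \Cref{cor:lifting-isomorphism}.
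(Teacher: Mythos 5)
Your proof is correct and follows essentially the same route as the paper: descend the $(n+\const)$-truncation along the filtered colimit via \Cref{lem: abgeschnittene Displays ueber Kolimes sind Kolimes}, extend it to an honest display fppf-locally via \Cref{lem:extending-adjoint-matrix}, and identify the two points with the rigidity statement \Cref{cor:lifting-isomorphism}. The only (harmless) cosmetic difference is that you make the height of $p^r\rho$ constant by decomposing $\calM^r=\bigsqcup_s\calM^{r,s}$ and spreading out the resulting idempotents, where the paper instead passes to a Zariski cover of $\Spec(R)$.
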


\begin{proof}
    We apply \Cref{lem:lofp-on-objects}.
    As we have already shown that the diagonal $\Delta \colon \calM^r \to \calM^r \times \calM^r$ is a finitely presented closed immersion our task is to verify condition $(\ref{item:lofp-on-objects})$ of the lemma.

    So let $R = \colim_{i\in I} R_{i}$ be a filtered colimit of $p$-nilpotent $W(\F)$-algebras, let $(\calP, \rho) \in \calM^r(R)$ and write $g \coloneqq p^{r} \rho \colon \calP \to \widetilde{\calP}_{0, R}$.
    After passing to a Zariski-cover of $\Spec(R)$ we may assume that $g$ is an isogeny of constant height $m$.
	Let $e$ be the maximum of the nilpotency orders of $\widetilde{\calP}_{0}$ and $\calP$ and let $\const = \const(a, e, m)$ and $n = n(a, e, m, c)$ be the constants from \Cref{lem:extending-adjoint-matrix} and \Cref{cor:lifting-isomorphism}, where $c$ denotes the codimension of $\calP_0$.

	The isogeny $g \colon \calP \rightarrow \widetilde{\calP}_{0, R}$ induces an isogeny of $(n + \const)$-truncated displays
	\[
		g[n + \const] \colon \calP[n + \const] \rightarrow \widetilde{\calP}_{0,R}[n + \const] \, .
	\]
	We may use \Cref{lem: abgeschnittene Displays ueber Kolimes sind Kolimes} to descend $\calP[n + \const]$ and $g[n + \const]$ to
	\[
		g_{i, n + \const} \colon \calP_{i, n + \const} \rightarrow \widetilde{\calP}_{0, R_i}[n + \const]
	\]
	for some $i \in I$.
	Upon increasing the index $i \in I$ if necessary, we may assume that $g_{i, n + \const}$ is again an isogeny of height $m$ and that the nilpotency order of $\calP_{i, n + \const}$ is still less or equal than $e$.
	We apply \Cref{lem:extending-adjoint-matrix} to find a faithfully flat ring map $R_{i} \rightarrow S_{i}$ and data
    \[
        \calP_i, \quad
        g_i \colon \calP_i \to \widetilde{\calP}_{0, S_i} \quad \text{and} \quad
        f_{i, n} \colon \calP_{i, n + \const, S_i}[n] \to \calP[n]
    \]
    as in the statement.
	Setting $\rho_i = p^{-r} g_i$ we then obtain a point $(\calP_{i}, \rho_{i}) \in \calM^r(S_{i})$.

	From \Cref{cor:lifting-isomorphism} it then follows that the points $(\calP^{\prime}_{i},\rho^{\prime}_{i}) \in \calM^r(S_{i})$ and $(\calP, \rho) \in \calM^r(R)$ are identified in $\calM^r(S_i \otimes_{R_i} R)$ as desired.
\end{proof}

Using the projectivity of the Witt vector affine Grassmannian \cite{BhattScholzeWitt} we can even show the representability of $\calM^r$ as a $p$-adic formal scheme.

\begin{corollary} \label{cor: Corollary Mr,s are p-adic formal schemes}
    $\calM^{r, s}$ is a $p$-adic formal scheme that is proper over $\Spf(W(\F))$.
\end{corollary}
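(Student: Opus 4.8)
The plan is to reduce the statement to the special fibre and then invoke the projectivity of the Witt vector affine Grassmannian \cite{BhattScholzeWitt}. Since $\calM^{r, s} \subseteq \calM^r$ is an open and closed subfunctor, \Cref{prop:m-r-representable} shows that $\calM^{r, s}$ is a $p$-adic formal algebraic space that is separated and locally of finite type over $\Spf(W(\F))$. Write $Y \coloneqq \calM^{r, s} \times_{\Spf(W(\F))} \Spec(\F)$ for its special fibre, an algebraic space separated and locally of finite type over $\F$. It suffices to prove that $Y$ is a scheme that is proper over $\F$: indeed $\calM^{r, s} = \colim_n \bigl( \calM^{r, s} \times_{\Spf(W(\F))} \Spec(W(\F)/p^{n + 1}) \bigr)$, each term is a nilpotent thickening of $Y$ in the category of algebraic spaces, a thickening of a scheme is again a scheme, and properness is insensitive to nilpotent thickenings of source and base.

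To study $Y$ I would pass to its perfection, i.e.\ to the functor $Y^{\perf}$ on perfect $\F$-algebras with $Y^{\perf}(R) = \calM^{r, s}(R)$. Over a perfect $\F$-algebra $R$ the relevant frames simplify, since the Witt vector Frobenius on $W(R)$ is surjective, so that $I_R = p W(R)$ and $W(R)/p^n \simeq W_n(R)$. Unwinding the definition of a display over such $R$, I expect that a point of $\calM^{r, s}(R)$ is the same datum as a $W(R)$-lattice $M$ in the underlying module $N_R$ of the iso-display $\calP_{0, R}[1/p]$ — the quasi-isogeny $\rho$ being encoded by this inclusion, and nilpotency being automatic — that is stable under the Frobenius $\Phi$ and the Verschiebung $V = p \Phi^{-1}$ of $N_R$ (this being the condition that $M$ underlies a display together with its Hodge filtration), and that is bounded by $p^r M \subseteq M_0$ with $M_0 / p^r M$ of length $s$, where $M_0 \subseteq N_R$ denotes the underlying $W(R)$-module of $\calP_{0, R}$. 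Putting $M' \coloneqq p^r M$, this identifies $Y^{\perf}$ with the locus in the Witt vector affine Grassmannian $\mathrm{Gr}_{\GL_h}$ over $\F$ (here $h$ is the height of $\calP_0$) consisting of the $W(R)$-lattices $M'$ that satisfy $p^s M_0 \subseteq M' \subseteq M_0$, that have $M_0 / M'$ of length $s$, and that satisfy $\Phi M' \subseteq M'$ and $V M' \subseteq M'$. Now the first condition describes a bounded piece of $\mathrm{Gr}_{\GL_h}$, which is a projective perfect scheme over $\F$ by \cite{BhattScholzeWitt}; the length condition is open and closed in that bounded piece; and the two Frobenius-type conditions are closed. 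Hence $Y^{\perf}$ is representable by a projective perfect scheme over $\F$.

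Granting this, I would conclude as follows. As $Y$ is separated and locally of finite type over $\F$ while $Y^{\perf}$ is quasi-compact, $Y$ is quasi-compact and quasi-separated; by the result of Bhatt--Scholze \cite{BhattScholzeWitt} that a quasi-compact and quasi-separated algebraic space over $\Fp$ is a scheme precisely when its perfection is, $Y$ is a scheme. Furthermore $Y^{\perf} \to \Spec(\F)$ is proper and the projection $Y^{\perf} \to Y$ is an integral surjective universal homeomorphism, so $Y \to \Spec(\F)$ is universally closed, and being separated and of finite type it is proper. By the first paragraph, $\calM^{r, s}$ is then a $p$-adic formal scheme that is proper over $\Spf(W(\F))$.

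The step I expect to be the main obstacle is the identification in the second paragraph: translating a point of $\calM^{r, s}(R)$, for $R$ a perfect $\F$-algebra, into the stated lattice datum — in particular bookkeeping the Hodge filtration correctly — and checking that Frobenius- and Verschiebung-stability together with the boundedness condition define a closed subfunctor of a bounded piece of $\mathrm{Gr}_{\GL_h}$. The remaining steps are either formal manipulations or direct appeals to \cite{BhattScholzeWitt}.
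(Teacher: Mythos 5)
Your proposal is correct and follows essentially the same route as the paper: the paper's (two-sentence) proof likewise reduces via thickenings of algebraic spaces to the special fibre and then invokes Bhatt--Scholze's projectivity of the Witt vector affine Grassmannian to see that the perfection is a perfect scheme perfectly proper over $\F$. The identification of $\calM^{r,s,\perf}$ with a closed subfunctor of a bounded Schubert piece of $\Gr_{\GL_h}$, which you rightly flag as the substantive step, is exactly what the paper leaves implicit in its citation of \cite{BhattScholzeWitt}.
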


\begin{proof}
    Using \cite[Tag 07VV]{stacks} we are reduced to showing that the perfection $\calM^{r, s, \perf}$ is a perfect scheme perfectly proper over $\F$.
    This follows from \cite[Theorem 1.1]{BhattScholzeWitt}.
\end{proof}

        \subsection{Representability of $\calM^{\Zink}$}
        Having shown the representability of $\calM^r$ we now turn to the representability of the whole moduli space $\calM$.

\begin{proposition} \label{prop:abstract-ind-formal}
    Let $X$ be an ind-scheme and suppose that the following conditions are satisfied:
    \begin{enumerate}
        \item \label{item:abstract-ind-formal-1}
        $X$ can be written as a countably indexed filtered colimit of locally Noetherian schemes along closed immersions.

        \item \label{item:abstract-ind-formal-2}
        The underlying topological space $\abs{X}$ is a locally Noetherian locally spectral space.
        
        \item \label{item:abstract-ind-formal-3}
        Let $A$ be an admissible linearly topologized ring with nilpotent ideal of definition.
        Then the natural map
        \[
            X(A) \to X \roundbr[\big]{\Spf(A)}
        \]
        is bijective.
    \end{enumerate}
    Then $X$ is a locally Noetherian adic formal scheme.
\end{proposition}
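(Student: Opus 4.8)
The claim is local on the topological space $\abs{X}$, so the plan is to cover $\abs{X}$ by open subspaces on which $X$ becomes an affine locally Noetherian adic formal scheme and then to glue these (which is possible since an ind-scheme is in particular a Zariski sheaf). Write $X = \colim_n X_n$ as in condition~(\ref{item:abstract-ind-formal-1}), with the $X_n$ locally Noetherian schemes and $X_n \hookrightarrow X_{n + 1}$ closed immersions; then each $\abs{X_n}$ is closed in $\abs{X}$ and $\abs{X} = \bigcup_n \abs{X_n}$. Given a point $x \in \abs{X}$, condition~(\ref{item:abstract-ind-formal-2}) provides an open neighbourhood $W$ of $x$ that is a Noetherian spectral space. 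Since $W = \bigcup_n \roundbr[\big]{W \cap \abs{X_n}}$ is an increasing union of closed subsets of the Noetherian space $W$, it stabilizes — argue via the finitely many irreducible components of $W$, the generic point of each lying in some $W \cap \abs{X_n}$ — so $W \subseteq \abs{X_m}$ for some $m$. Hence $W$ is a quasi-compact open subspace of the locally Noetherian scheme $X_m$ and, since $W \subseteq \abs{X_m} \subseteq \abs{X_k}$, an open subspace of $\abs{X_k}$ for every $k \geq m$; in particular every open subset of $W$ is again open in $\abs{X}$, so we may shrink $W$ at will.

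Shrinking $W$ to an affine open $V = \Spec(B_m)$ of $X_m$, with $B_m$ Noetherian, we obtain $X|_V = \colim_{k \geq m} (X_k)|_V$, where each $(X_k)|_V$ is an open subscheme of $X_k$ with underlying space $V$ containing $(X_m)|_V = \Spec(B_m)$ as a closed subscheme. Being Noetherian with the same underlying space as $\Spec(B_m)$, the scheme $(X_k)|_V$ is a nilpotent thickening of $\Spec(B_m)$ and hence affine, say $(X_k)|_V = \Spec(B_k)$, the transition maps $B_{k + 1} \twoheadrightarrow B_k$ being surjective with nilpotent kernel. Setting $A \coloneqq \lim_{k \geq m} B_k$ — a complete linearly topologized ring with ideal of definition $\ker(A \to B_m)$ — one identifies $X|_V$ with $\Spf(A)$.

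The heart of the matter, and where I expect the real obstacle, is to show that $A$ is Noetherian and that its topology is adic, i.e.\ that $X|_V = \Spf(A)$ is a locally Noetherian adic formal scheme: for an arbitrary tower of Noetherian rings with surjective transition maps having nilpotent kernels the limit is in general neither Noetherian nor adic, so the hypotheses must be used essentially here. This is the step that condition~(\ref{item:abstract-ind-formal-3}), together with the Noetherianity built into condition~(\ref{item:abstract-ind-formal-1}), is meant to handle: feeding the rings $B_k$ and their infinitesimal thickenings into condition~(\ref{item:abstract-ind-formal-3}) should force the ideals $\ker(A \to B_k)$ to be cofinal with the powers of $\ker(A \to B_m)$, exhibiting $A$ as a Noetherian adic ring. (Alternatively, one may first use condition~(\ref{item:abstract-ind-formal-1}) and the sheaf property of $X$ to recognize $X$ as a locally Noetherian adic formal algebraic space, and then invoke that such a space with schematic diagonal — which $X$ has, its diagonal being a closed immersion — and locally spectral underlying space is a formal scheme, in analogy with the corresponding statement for algebraic spaces.) Once this is settled, the affine formal schemes $\Spf(A)$ obtained over an open cover of $\abs{X}$ glue to a locally Noetherian adic formal scheme, and a final application of condition~(\ref{item:abstract-ind-formal-3}) to rings with nilpotent ideal of definition identifies it with $X$.
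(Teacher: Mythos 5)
Your reduction to the affine local statement is correct and follows essentially the same route as the paper: you use the locally Noetherian, sober topology of $\abs{X}$ to produce an open $W \ni x$ that is swallowed by some $\abs{X_m}$ (via the generic points of the finitely many irreducible components of a Noetherian spectral neighbourhood — the same device the paper uses, phrased slightly differently), shrink to an affine $V = \Spec(B_m)$, observe that the $(X_k)|_V$ are nilpotent thickenings $\Spec(B_k)$ of $\Spec(B_m)$, and identify $X|_V$ with $\Spf(A)$ for $A = \lim_k B_k$. All of this is fine.

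However, the step you yourself flag as \enquote{the heart of the matter} — that $A$ is a Noetherian \emph{adic} ring — is not proved; \enquote{feeding the rings $B_k$ and their infinitesimal thickenings into condition (3) should force the ideals $\ker(A \to B_k)$ to be cofinal with the powers of $\ker(A \to B_m)$} is not an argument, and applying condition (3) to the discrete rings $B_k$ themselves yields nothing new. The correct move, and the one place where condition (3) genuinely enters, is to apply it to the quotient $A/\overline{I^n}$, where $I = \ker(A \to B_m)$ and $\overline{I^n}$ denotes the closure of $I^n$: this is an admissible linearly topologized ring with nilpotent ideal of definition, the tautological compatible system of maps $\Spec\roundbr[\big]{A/(\overline{I^n} + \ker(A \to B_k))} \to X$ defines a point of $X(\Spf(A/\overline{I^n}))$, and condition (3) algebraizes it to a morphism $\Spec(A/\overline{I^n}) \to X$, which by quasi-compactness factors through some $\Spec(B_k)$; compatibility then forces $\ker(A \to B_k) \subseteq \overline{I^n}$, so each $\overline{I^n}$ is open and $A$ is weakly adic. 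Since $A/I \simeq B_m$ is Noetherian, \cite[Tag 0AKM]{stacks} upgrades this to $A$ being a Noetherian adic ring. Without this argument the proof is incomplete, and the parenthetical alternative via formal algebraic spaces begs exactly the same question.
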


\begin{proof}
    Write $X = \colim_{i \geq 1} X_i$ as in condition $(\text{\ref{item:abstract-ind-formal-1}})$ and let $x \in \abs{X}$.
    Our goal is to find an open immersion $\Spf(B) \to X$ whose topological image contains $x$ and such that $B$ is a Noetherian adic ring.

    Using condition $(\text{\ref{item:abstract-ind-formal-2}})$ we may assume without loss of generality that $\abs{X_1}$ already contains the (finitely many) irreducible components of $\abs{X}$ containing $x$.
    Now choose an affine neighorhood $U_1 = \Spec(B_1) \subseteq X_1$ of $x$.
    After possibly shrinking $U_1$ we may then assume that $\abs{U_1}$ does not intersect any irreducible component of $\abs{X}$ that does not contain $x$.
    This implies that $\abs{U_1} \subseteq \abs{X}$ is open.
    We thus obtain open subschemes $U_i = \Spec(B_i) \subseteq X_i$ with $\abs{U_i} = \abs{U_1}$ for all $i \geq 1$.
    These fit into a commutative diagram
    \[
    \begin{tikzcd}
        U_1 \ar[r] \ar[d]
        & U_2 \ar[r] \ar[d]
        & U_3 \ar[r] \ar[d]
        & \hdots
        \\
        X_1 \ar[r]
        & X_2 \ar[r]
        & X_3 \ar[r]
        & \hdots
    \end{tikzcd}
    \]
    where the upper horizontal morphisms are thickenings and the squares are pullbacks.
    The limit $B = \lim_i B_i$ now is an admissible linearly topologized ring and we have an open immersion $U = \Spf(B) \to X$ with topological image $\abs{U_1}$.
    
    Now choose an ideal of definition $I \subseteq B$.
    Note that condition $(\text{\ref{item:abstract-ind-formal-3}})$ implies that the same condition is satisfied if we replace $X$ by $\Spf(B)$.
    We can thus apply it to $A = B/\overline{I^n}$ to see that the closure $\overline{I^n} \subseteq B$ is an open ideal.
    Thus $B$ is weakly adic and from \cite[Tag 0AKM]{stacks} it follows that $B$ is a Noetherian adic ring as desired.
\end{proof}

\begin{corollary}\label{cor: Corollary how to check M is representable}
    Let $X$ be an ind-scheme over $\Spf(W(\F))$ and assume that the following conditions are satisfied:
    \begin{itemize}
        \item
        $X$ can be written as a countably indexed filtered colimit of $p$-adic formal schemes that are locally of finite type over $\Spf(W(\F))$ along closed immersions.

        \item
        $\abs{X}$ is a locally Noetherian locally spectral space.

        \item
        Let $A$ be an admissible linearly topologized $W(\F)$-algebra with nilpotent ideal of definition.
        Then the natural map
        \[
            X(A) \to X \roundbr[\big]{\Spf(A)}
        \]
        is bijective.
    \end{itemize}
    Then $X$ is a formal scheme locally formally of finite type over $\Spf(W(\F))$.
\end{corollary}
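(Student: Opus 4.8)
The plan is to deduce the statement from \Cref{prop:abstract-ind-formal}. The first task is to exhibit $X$ in the shape required there, namely as a countably indexed filtered colimit of locally Noetherian \emph{schemes} along closed immersions. Write $X = \colim_{i \geq 1} X_i$ with the $X_i$ as in the first hypothesis. Since $W(\F)$ is Noetherian, each quotient $W(\F)/p^n$ is Noetherian, so the schemes $X_{i, n} \coloneqq X_i \times_{\Spf(W(\F))} \Spec(W(\F)/p^n)$ are locally of finite type over a Noetherian ring, hence locally Noetherian, and $X_i = \colim_{n} X_{i, n}$ along the thickenings $X_{i, n} \hookrightarrow X_{i, n + 1}$. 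Combining the two colimits and restricting to the cofinal diagonal of $\mathbb{N} \times \mathbb{N}$ yields $X = \colim_{k \geq 1} X_{k, k}$, a countably indexed filtered colimit of locally Noetherian schemes whose transition morphisms $X_{k, k} \to X_{k + 1, k + 1}$ are closed immersions, being composites of the base-changed closed immersions $X_{k, k} \to X_{k + 1, k}$ and the thickenings $X_{k + 1, k} \to X_{k + 1, k + 1}$. This establishes condition (\ref{item:abstract-ind-formal-1}), and condition (\ref{item:abstract-ind-formal-2}) is the second hypothesis verbatim.

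For condition (\ref{item:abstract-ind-formal-3}), observe that any admissible linearly topologized ring $A$ with nilpotent ideal of definition admitting a morphism $\Spf(A) \to X$ is canonically a (topological) $W(\F)$-algebra, since $X$ lies over $\Spf(W(\F))$; organizing both $X(A)$ and $X(\Spf(A))$ according to the induced continuous ring homomorphism $W(\F) \to A$, the required bijectivity follows from the third hypothesis applied to $A$ with each of these $W(\F)$-algebra structures. Hence \Cref{prop:abstract-ind-formal} applies and shows that $X$ is a locally Noetherian adic formal scheme.

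It remains to upgrade this to \enquote{locally formally of finite type over $\Spf(W(\F))$}, for which it suffices to show that $X_{\mathrm{red}}$ is locally of finite type over $\F$. Inspecting the proof of \Cref{prop:abstract-ind-formal}, every point of $X$ admits an open neighbourhood $\Spf(B)$ with $B = \lim_{k} B_k$, where $B_k$ is the coordinate ring of an open affine $U_k = \Spec(B_k) \subseteq X_{k, k}$ and the $U_k$ all have the same underlying topological space. In particular $B_k$ is of finite type over $W(\F)/p^k$, and since each transition $U_k \hookrightarrow U_{k + 1}$ is a closed immersion inducing a homeomorphism on underlying spaces, the map $B_{k + 1} \to B_k$ is surjective with nilpotent kernel; hence the reductions of all the $B_k$ coincide with the finite type $\F$-algebra $(B_1)_{\mathrm{red}}$. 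Now $J \coloneqq \ker(B \to B_1)$ is an open ideal consisting of topologically nilpotent elements (an $x \in J$ maps to a nilpotent element of every $B_k$, and the ideals $\ker(B \to B_k)$ form a neighbourhood basis of $0$), hence is contained in the largest ideal of definition $\mathcal{I}$ of $B$; conversely $\mathcal{I}^n \subseteq J$ for $n \gg 0$, so $\mathcal{I}/J$ is nilpotent and $(B/\mathcal{I})_{\mathrm{red}} = (B_1)_{\mathrm{red}}$. Thus $X_{\mathrm{red}}$ is locally isomorphic to $\Spec((B_1)_{\mathrm{red}})$, hence locally of finite type over $\F$, which together with local Noetherianness is precisely the assertion that $X$ is locally formally of finite type over $\Spf(W(\F))$.

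I expect the main obstacle to be this last step: reconciling the abstractly produced ring $B = \lim_k B_k$ from \Cref{prop:abstract-ind-formal} with the finite-type structure on the pieces $X_{k, k}$, and identifying an ideal of definition of $B$ whose quotient is of finite type over $\F$. The delicate point is that the nilpotence orders of the kernels $\ker(B_k \to B_1)$ may grow with $k$, so that $J$ itself need not be an ideal of definition; the fix is to compare $J$ with the largest ideal of definition $\mathcal{I}$ and to use that $X_{\mathrm{red}}$ can be computed from $\mathcal{I}$.
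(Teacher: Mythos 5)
Your proposal is correct and follows the same route as the paper, whose proof is simply to invoke \Cref{prop:abstract-ind-formal}; your reduction to a countable colimit of locally Noetherian schemes via the diagonal $X_{k,k}$, the bookkeeping of the $W(\F)$-algebra structure in the algebraization condition, and the identification of $X_{\mathrm{red}}$ with the finite type $\F$-scheme $\Spec((B_1)_{\mathrm{red}})$ are exactly the details the paper leaves implicit. No gaps.
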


\begin{proof}
    This follows from \Cref{prop:abstract-ind-formal}.
\end{proof}

\begin{theorem}[$\calM$ is representable] \label{thm:main-result}
    The functor $\calM$ is representable by a formal scheme that is locally formally of finite type, ind-proper and formally smooth over $\Spf(W(\F))$.
\end{theorem}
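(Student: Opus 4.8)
The plan is to exhibit $\calM$ as the increasing union $\calM = \colim_{r \geq 0} \calM^r$ and to apply \Cref{cor: Corollary how to check M is representable}. By \Cref{cor:locus-isogeny} each inclusion $\calM^r \hookrightarrow \calM^{r+1}$ is a finitely presented closed immersion, and this union exhausts $\calM$: for $(\calP, \rho) \in \calM(R)$ the quasi-isogeny $\rho$ lies in $\Hom_R(\calP, \widetilde{\calP}_{0,R})[1/p]$ by \cite[Prop. 66]{ZinkDisplay}, so $p^r \rho$ is a morphism of displays for $r \gg 0$, and it is then automatically an isogeny since $\det(p^r \rho)$ is a quasi-isogeny of $\Zp$-local systems of rank $1$ and hence has everywhere finite $p$-adic valuation (cf.\ \Cref{lem:isogeny-equivalent} and \Cref{lem:local-systems-witt-vectors}). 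Since $\calM^r = \bigsqcup_{s \geq 0} \calM^{r, s}$ with each $\calM^{r, s}$ a $p$-adic formal scheme proper over $\Spf(W(\F))$ by \Cref{cor: Corollary Mr,s are p-adic formal schemes}, each $\calM^r$ is a $p$-adic formal scheme locally of finite type over $\Spf(W(\F))$; thus $\calM$ is a countably indexed filtered colimit of $p$-adic formal schemes locally of finite type over $\Spf(W(\F))$ along closed immersions, which is the first hypothesis of \Cref{cor: Corollary how to check M is representable}.

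For the algebraization hypothesis, let $A$ be an admissible linearly topologized $W(\F)$-algebra with nilpotent ideal of definition. Using Lau's equivalence $\Disp(A) \xrightarrow{\sim} \lim_{I} \Disp(A/I)$ (as in the verification of condition $(\ref{item:artin-algebraize})$ in the proof of \Cref{prop:m-r-representable}) together with the rigidity of quasi-isogenies, a point of $\calM(\Spf A)$ algebraizes uniquely to a point of $\calM(A)$; here one uses that the locus on $\Spec(A)$ where a given morphism of displays is an isogeny is open, so that this property can be tested over the quotients $A/I$.

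The essential point is the topological hypothesis: that $\abs{\calM} = \bigcup_r \abs{\calM^r}$ is a locally Noetherian, locally spectral space. Each $\abs{\calM^{r, s}}$ is a Noetherian spectral space, being the underlying space of a formal scheme proper over $\Spf(W(\F))$, but the $\calM^{r, s}$ are merely closed — not open — in $\calM$, and indeed $\abs{\calM}$ is in general not quasi-compact (witness the chain of $\mathbb{P}^1$'s that appears for a supersingular height-$2$ formal group). What is needed is the analogue for nilpotent displays of Rapoport--Zink's boundedness estimate on the Bruhat--Tits building \cite[Prop. 2.18]{RZ} — the ``reduction step'' alluded to in the introduction — showing that $\bigcup_r \abs{\calM^r}$ is the underlying space of a scheme locally of finite type over $\F$, and hence is locally Noetherian and locally spectral. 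I expect this to be the main obstacle in the argument; granting it, \Cref{cor: Corollary how to check M is representable} yields that $\calM$ is a formal scheme locally formally of finite type over $\Spf(W(\F))$.

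It remains to record the two further properties. Ind-properness follows by reindexing: $\calM$ is the countably indexed filtered colimit, along closed immersions, of the formal schemes $\bigsqcup_{s \leq N} \calM^{r, s}$, each a finite disjoint union of formal schemes proper over $\Spf(W(\F))$. Formal smoothness over $\Spf(W(\F))$ follows from the unobstructed deformation theory of nilpotent displays: given a square-zero thickening $B \twoheadrightarrow A$ of $p$-nilpotent $W(\F)$-algebras and a point $(\calP, \rho) \in \calM(A)$, equip $\ker(B \to A)$ with the trivial pd-structure and use \Cref{thm:deformation-theory} — together with the projectivity of the Hodge filtration — to lift $\calP$ to a nilpotent display over $B$, and then lift $\rho$ uniquely by rigidity of quasi-isogenies; this produces a point of $\calM(B)$ mapping to $(\calP, \rho)$, so $\calM(B) \to \calM(A)$ is surjective.
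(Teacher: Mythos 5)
Your overall architecture coincides with the paper's: write $\calM = \colim_r \calM^r$ with finitely presented closed immersions as transition maps (via \Cref{cor:locus-isogeny} and rigidity of quasi-isogenies), invoke \Cref{prop:m-r-representable} and \Cref{cor: Corollary Mr,s are p-adic formal schemes} to know each $\calM^r$ is a $p$-adic formal scheme locally of finite type, algebraize $\Spf(A)$-points using \cite[Lemma 2.10]{LauFrames} and unique lifting of quasi-isogenies, and get formal smoothness from the deformation theory of nilpotent displays; all of this matches the paper's proof of \Cref{thm:main-result}, and your exhaustion and ind-properness remarks are fine.

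However, there is a genuine gap exactly where you flag it: you do not verify the second hypothesis of \Cref{cor: Corollary how to check M is representable}, namely that $\abs{\calM}$ is a locally Noetherian, locally spectral space, and instead postulate an as-yet-unproved analogue of Rapoport--Zink's Bruhat--Tits boundedness estimate \cite[Prop. 2.18]{RZ}. The paper does not reprove such a boundedness statement; it observes that $\abs{\calM}$ is the underlying space of (the perfection of) an affine Deligne--Lusztig variety inside the Witt vector affine Grassmannian, and cites Zhu's theorem \cite[Thm. 3.1]{ZhuWitt} to conclude that this space is locally (perfectly) of finite type over $\F$, hence locally Noetherian and locally spectral. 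In particular, no decency hypothesis and no building argument enter, which is precisely how the paper dispenses with the assumption made by Rapoport--Zink. Without this input (or a substitute for it) your argument establishes representability only conditionally, so to complete the proof you should replace the appeal to a hypothetical boundedness principle by the reference to \cite[Thm. 3.1]{ZhuWitt}.
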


\begin{proof}
	We want to apply \Cref{cor: Corollary how to check M is representable}; to do so we have to verify that the assumptions are met for $\calM$. 
	By \Cref{prop:m-r-representable} and \Cref{cor:locus-isogeny}, we may write
	\[
		\calM=\colim_{r} \calM^{r},
	\]
	where the transition maps are closed immersions of $p$-adic formal schemes. 
	It follows from \cite[Thm. 3.1]{ZhuWitt} that $\abs{\calM}$ is a locally Noetherian locally spectral space. 
	To verify that every extension problem is solvable we have to algebraize a point $(\calP,\rho) \in \calM(\Spf(A))$, for $A$ an admissible linearly topologized $W(\F)$-algebra with nilpotent ideal of definiton, to a point in $\calM(A)$.
	One may use \cite[Lemma 2.10]{LauFrames} to algebraize the display and the unique liftability of quasi-isogenies to algebraize the quasi-isogeny.

    The formal smoothness of $\calM$ then follows from the deformation theory of nilpotent displays.
\end{proof}

        \subsection{Ampleness of the Hodge-bundle}
        We include the following proposition, showing that the Hodge bundle on $\calM$ is ample (after restriction to a quasi-compact closed subscheme).

S.B.\ thanks Laurent Fargues for crucial discussions about this result; it was originally found in the beginning of his PhD.

\begin{proposition}[ampleness of the Hodge bundle] \label{prop: Local ampelness}
    Let $(P^{\univ}, Q^{\univ})$ be the underlying $(W(\calO_{\calM}), I_{\calO_{\calM}})$-module of the universal nilpotent display over $\calM$ (where we use the notation from \cite{ZinkDisplay}).
    Then the restriction of the Hodge bundle
    \[
        \omega \coloneqq \det \roundbr[\big]{P^{\univ}/Q^{\univ}}^{\vee} \in \Pic(\calM)
    \]
    to any closed $p$-adic formal subscheme $X \subseteq \calM$ of finite type over $\Spf(W(\F))$ is (relatively) ample.
    Consequently every such $X$ admits a closed immersion $X \to \bbP^N_{\Spf(W(\F))}$ for some $N$.
\end{proposition}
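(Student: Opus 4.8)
The plan is to verify ampleness on the perfect special fibre, where one can realise the space inside the Witt vector affine Grassmannian and pull back the ample determinant line bundle provided by \cite{BhattScholzeWitt}; this is a $p$-adic counterpart of the fact, underlying Moret-Bailly's theorem, that the Hodge bundle on the moduli of abelian varieties is pulled back from the ample line bundle on its Satake compactification.

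First I would reduce. As $\calM$ is ind-proper and $X$ is quasi-compact, $X$ factors through $\calM^r$ for some $r$, and since $\calM^r = \bigsqcup_s \calM^{r, s}$ and $X$ is quasi-compact it factors through a finite union of the $\calM^{r, s}$; in particular $X$ is proper over $\Spf(W(\F))$. For a proper $p$-adic formal scheme over $\Spf(W(\F))$ a line bundle is relatively ample if and only if its restriction to the special fibre is ample (by formal GAGA and the ampleness criterion over the complete discrete valuation ring $W(\F)$), ampleness of a line bundle on a scheme of finite type over $\F$ is unaffected by pullback along the absolute Frobenius, and ampleness descends along the perfection morphism; hence it suffices to show that $\omega^{\perf}$ is ample on $X_0^{\perf}$, where $X_0 \coloneqq X \times_{\Spf(W(\F))} \F$. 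Granting this, the final assertion is automatic: a tensor power of $\omega$ is very ample, and over the affine base $\Spf(W(\F))$ this produces the claimed closed immersion into $\bbP^N_{\Spf(W(\F))}$.

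Next I would set up the Grassmannian model. Over a perfect $\F$-algebra $R$ a nilpotent display is the same datum as a finite projective $W(R)$-module $P$ equipped with an injective Frobenius-linear endomorphism $F$ (recall $I_R = p W(R)$ and $W(R)/p^n \simeq W_n(R)$ in this case). Over $X_0^{\perf}$ the universal quasi-isogeny $\rho$ identifies the iso-crystal of the universal display with the constant one attached to $\widetilde{\calP}_0$, so that the underlying Witt-module $P^{\univ}$ of the universal display becomes a $W(\calO)$-lattice inside $\Lambda_0[1/p]$ stable under the fixed Frobenius $F_0$ of $\widetilde{\calP}_0$, where $\Lambda_0 \coloneqq \widetilde{\calP}_0 \otimes W(\calO)$; on the clopen piece $X_0^{r, s, \perf}$ one has moreover $P^{\univ} \subseteq \Lambda_0$ with $[\Lambda_0 : P^{\univ}] = s$. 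This defines a morphism from $X_0^{r, s, \perf}$ into a bounded Schubert variety $\mathrm{Gr}_{\leq \mu}$ in the Witt vector affine Grassmannian of $\GL_h$, with $\mu$ depending only on $r$ and $s$, and this morphism is a closed immersion: the Hodge submodule is recovered as $Q^{\univ} = \set{x \in P^{\univ}}{F_0(x) \in p P^{\univ}}$, while the conditions $F_0(P^{\univ}) \subseteq P^{\univ}$ and nilpotency of the associated display are closed. By \cite[Theorem 1.1]{BhattScholzeWitt} the variety $\mathrm{Gr}_{\leq \mu}$ is projective; fix an ample determinant line bundle $\mathcal{L}$ on it.

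The heart of the matter — and the step I expect to be the genuine obstacle — is to identify some positive tensor power of $\omega^{\perf}$ with the restriction of a positive power of $\mathcal{L}$; ampleness of $\omega^{\perf}$ then follows, the restriction of an ample line bundle along a closed immersion being ample. By its construction in \cite{BhattScholzeWitt} through a closed embedding of $\mathrm{Gr}_{\leq \mu}$ into a finite-level Grassmannian of $W_N(\calO)$-modules, $\mathcal{L}$ is the determinant over $\calO$ of the universal lattice drop $\Lambda_0/P^{\univ}$; because the ambient lattice $\Lambda_0$ is \emph{constant}, the graded pieces of the $p$-adic filtration of $P^{\univ}$ have trivial determinant over $\calO$ except for the single contribution measured by $F_0 \colon P^{\univ} \to P^{\univ}$, whose reduction modulo $p$ has kernel $Q^{\univ}/p P^{\univ}$ and image isomorphic to $P^{\univ}/Q^{\univ}$; keeping track of the Frobenius twists one obtains $\mathcal{L}|_{X_0^{r, s, \perf}} \simeq (\omega^{\perf})^{\otimes k}$ for some integer $k \geq 1$. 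Performing this comparison carefully, together with the (more routine) verification that the map to $\mathrm{Gr}_{\leq \mu}$ is a closed immersion, is the one genuinely non-formal ingredient; the positivity is then free. As a consistency check on the positivity, one can also argue on curves: on a complete curve $C \subseteq X_0$ the isogeny to the constant display forces the determinant of the underlying Witt-module to have degree $0$ along $C$, whereas nilpotency of the display forces $\deg(\det(P^{\univ}/Q^{\univ}))|_C \leq 0$, with equality only when $(\calP, \rho)|_C$ is constant — but deducing ampleness in arbitrary dimension still goes through the Grassmannian model.
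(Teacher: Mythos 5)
Your overall route coincides with the paper's: reduce by formal GAGA to the special fibre, pass to the perfection (where ampleness may be checked, \cite[Lemma 3.6]{BhattScholzeWitt}), embed the perfected space into the Witt vector affine Grassmannian attached to $\calP_0$, and compare $\omega$ with the Bhatt--Scholze determinant line bundle $\calL$. The gap is exactly the step you flag but do not carry out: the identification of a positive power of $\omega^{\perf}$ with the restriction of a positive power of $\calL$. As stated, your conclusion $\restr{\calL}{X_0^{r,s,\perf}} \simeq (\omega^{\perf})^{\otimes k}$ with $k \geq 1$ is not what holds: the correct relation is $\omega^{\otimes p} \simeq i^* \calL^{\otimes (p-1)}$ on $\calM^{0,\perf}$, and since $k(p-1) = p$ has no integral solution for $p > 2$, a relation of the form you assert would force an unmotivated torsion relation on $\omega$. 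Moreover the heuristic you offer --- that $\widetilde{\det}$ of the lattice drop $P_{0,W(R)}/P$ sees only ``one contribution from $F_0$ modulo $p$'' --- does not compute this class, which is not determined by mod-$p$ data of $P$ alone. The actual mechanism is different: one compares the exact sequence
\[
    0 \to P \to P_{0, W(R)} \to P_{0, W(R)}/P \to 0
\]
with its Frobenius twist via the vertical maps $V^{\sharp}$, uses multiplicativity of $\widetilde{\det}$ together with a trivialization of $\det(\coker(V^{\sharp} \colon P_0 \to P_0^F))$ on the constant lattice, and identifies $\coker(V^{\sharp} \colon P \to P^F) \simeq (P/Q)^F$; the Frobenius twist produces the exponent $p$ on the $\omega$-side, while the dual term $\widetilde{\det}\roundbr[\big]{(P_{0,W(R)}/P)^F}^{\vee} \simeq i^*\calL^{\otimes(-p)}$ produces the exponent $p-1$ on the $\calL$-side. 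Without this (or an equivalent) computation the positivity of $\omega$ is not established, so the heart of the proof is missing.

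The surrounding reductions are fine and essentially as in the paper: your factorization through finitely many $\calM^{r,s}$ plays the role of the paper's translation $(\calP,\rho) \mapsto (\calP, p^r\rho)$ into $\calM^0$; the closed embedding into (a bounded part of) the Grassmannian and the ampleness of $\calL$ on quasi-compact closed perfect subschemes are as in \cite{BhattScholzeWitt}; and for the final assertion note that over the formal base one should invoke Grothendieck's algebraization theorem \cite[Tag 089A]{stacks} rather than just very ampleness of a power of $\omega$.
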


\begin{proof}
    We may assume without loss of generality that $X$ is an $\F$-scheme, see \cite[Théorème 4.7.1]{EGA3.1}.
    After translating along $(X, \rho) \mapsto (X, p^r \rho)$ for $r \gg 0$ we may also assume that $X \subseteq \calM^0$.
    By \cite[Lemma 3.6]{BhattScholzeWitt} it now suffices to show that the restriction $\restr{\omega}{X^{\perf}}$ is ample.

    Now recall that Bhatt-Scholze define, for a perfect $\Fp$-algebra $R$, a natural functor
    \[
        \widetilde{\det} \colon \Perf \roundbr[\big]{\text{$W(R)$ on $R$}}^{\simeq} \longrightarrow \Pic^{\ZZ}(R)
    \]
    from the core of the $\infty$-category $\Perf(\text{$W(R)$ on $R$})$ of perfect complexes of $W(R)$-modules which are acyclic after inverting $p$ to the groupoid $\Pic^{\ZZ}(R)$ of graded invertible $R$-modules, see \cite[Theorem 5.7]{BhattScholzeWitt}.
    It extends the determinant functor $\det \colon \Perf(R)^{\simeq} \to \Pic^{\ZZ}(R)$ and is multiplicative in fiber sequences.

    Let $P_0$ be the underlying finite projective $W(\F)$-module of the display $\calP_0$ and set
    \[
        \Gr^{\Witt, 0} \colon \Alg_{\F}^{\perf} \to \Set, \qquad
        R \mapsto \curlybr[\big]{\text{isogenies $\rho \colon P \to P_{0, W(R)}$ of finite projective $W(R)$-modules}} \, .
    \]
    This is (a part of) the Witt vector affine Grassmannian attached to $P_0$; it is an ind-perfectly proper ind-perfect scheme over $\F$, see \cite[Theorem 8.3]{BhattScholzeWitt}.
    Moreover, the line bundle
    \[
        \calL \coloneqq \widetilde{\det} \roundbr[\big]{P_{0, \calO_{\Gr}}/P^{\univ}} \in \Pic \roundbr[\big]{\Gr^{\Witt, 0}}
    \]
    is ample after restricting to any quasi-compact closed perfect subscheme of $\Gr^{\Witt, 0}$.

    We now claim that there is a natural isomorphism
    \[
        \restr{\omega^{\otimes p}}{\calM^{0, \perf}} \simeq i^* \calL^{\otimes (p - 1)},
    \]
    where $i \colon \calM^{0, \perf} \to \Gr^{\Witt, 0}$ denotes the evident closed immersion.
    For this, let $(\calP, \rho) \in \calM^0(R)$ for some perfect $\F$-algebra $R$ and write $(P, Q)$ for the underlying $(W(R), I_R)$-module of $\calP$.
    We then have a commutative diagram
    \[
    \begin{tikzcd}
        0 \ar[r]
        & P \ar[r, "\rho"] \ar[d, "V^{\sharp}"]
        & P_{0, W(R)} \ar[r] \ar[d, "V^{\sharp}"]
        & P_{0, W(R)}/P \ar[r] \ar[d]
        & 0
        \\
        0 \ar[r]
        & P^F \ar[r, "\rho^F"]
        & P_{0, W(R)}^F \ar[r]
        & (P_{0, W(R)}/P)^F \ar[r]
        & 0
    \end{tikzcd}
    \]
    with exact rows.
    After fixing a trivialization of $\det(\coker(V^{\sharp} \colon P_0 \to P_0))$ we thus obtain a natural isomorphism
    \[
        \det \roundbr[\big]{\coker(V^{\sharp} \colon P \to P^F)} \simeq \widetilde{\det} \roundbr[\big]{P_{0, W(R)}/P} \otimes \widetilde{\det} \roundbr[\big]{(P_{0, W(R)}/P)^F}^{\vee} \, .
    \]
    The right hand side identifies with the evaluation of $i^* \calL^{\otimes (1 - p)}$ at the $R$-valued point $(\calP, \rho)$.
    On the other hand the image of $V^{\sharp} \colon P \to P^F$ is precisely the Frobenius twist $Q^F$, so that $\coker(V^{\sharp}) \simeq (P/Q)^F$.
    Thus the left hand side identifies with the evaluation of $\omega^{\otimes (-p)}$.
    This proves our claim and consequently the ampleness of $\restr{\omega}{X^{\perf}}$.

    The second part of the proposition then follows from Grothendieck's algebraization theorem, see \cite[Tag 089A]{stacks}.
\end{proof}

    \bibliography{bibliography}
    \bibliographystyle{alpha}

	% \printbibliography

\end{document}